\documentclass[11pt]{amsart}

\usepackage[T1]{fontenc}
\usepackage[utf8]{inputenc}
\usepackage{lmodern}
\usepackage{amsmath,amssymb,amsthm,mathtools}
\usepackage{enumitem}
\usepackage{tikz-cd}
\usepackage{hyperref}

\newtheorem{theorem}{Theorem}[section]
\newtheorem{proposition}[theorem]{Proposition}
\newtheorem{lemma}[theorem]{Lemma}
\newtheorem{corollary}[theorem]{Corollary}
\newtheorem{question}[theorem]{Question}

\theoremstyle{definition}
\newtheorem{definition}[theorem]{Definition}
\newtheorem{example}[theorem]{Example}
\newtheorem{notation}[theorem]{Notation}
\newtheorem{remark}[theorem]{Remark}

\newcommand{\coker}{\operatorname{coker}}
\newcommand{\A}{\mathcal A}
\newcommand{\W}{\mathcal W}

\newcommand{\B}{\mathcal B}

\newcommand{\T}{\mathsf T}
\newcommand{\F}{\mathsf F}
\newcommand{\tors}{\operatorname{tors}}
\newcommand{\torf}{\operatorname{torf}}
\newcommand{\filt}{\operatorname{filt}}
\newcommand{\gen}{\operatorname{gen}}
\newcommand{\cogen}{\operatorname{cogen}}

\newcommand{\Hom}{\operatorname{Hom}}
\newcommand{\Ext}{\operatorname{Ext}}
\newcommand{\Up}{\operatorname{Up}}
\newcommand{\Low}{\operatorname{Low}}

\newcommand{\elll}{\ell}

\newcommand{\lperpzero}{{{}^{\perp_0}}}

\newcommand{\tGen}[1]{\mathbf{T}(#1)}
\newcommand{\tpair}[2]{(\mathbf{#1}, \mathbf{#2})  }

\hypersetup{
    colorlinks=true,
    linkcolor=blue,
    pdftitle={Wide subcategories and brick-finiteness for length categories},
    pdfauthor={Francesco Sentieri},
    pdfpagemode=UseOutlines,
    }

\title[Wide subcategories and brick-finiteness]{Wide subcategories and brick-finiteness for length categories}
\author{Francesco Sentieri}
\email{francesco.sentieri@guarinoveronese.it}
\keywords{Torsion pairs, bricks, wide subcategories, abelian length category}

\begin{document}

\begin{abstract}
We extend to abelian length categories of finite rank a characterisation of brick-finiteness known for finite-dimensional algebras. We prove that such a category is brick-finite if and only if every torsion class is generated by a wide subcategory and every torsionfree class is cogenerated by a wide subcategory. The proof is based on an exchange relation for brick labels across wide intervals which is a shadow of the mutation of simple minded collections. As a corollary, we extend the validity of the first Brick Brauer-Thrall conjecture to this setting.
\end{abstract}

\maketitle

\tableofcontents

\section{Introduction}

In recent years, following \cite{AIR}, torsion pairs returned to a central position in the representation theory of finite-dimensional algebras. While in the seminal work of Adachi, Iyama and Reiten torsion pairs were studied with techniques exclusively applicable in the context of artin algebras, in particular through the concept of functorial finiteness and mutation, the connections of torsion pairs with bricks, obtained in \cite{DIJ}, paved the way to more general lattice-theoretical techniques.

In \cite{DIRRT}, the authors have shown, with arguments valid in the generality of abelian length categories, that the lattice of torsion classes enjoys several nice properties, namely it is bi-algebraic and completely semidistributive, and its local structure can be controlled by means of brick labels. These brick labels can be characterised precisely, as done in \cite{BCZ}, as the so-called minimal extending and co-extending objects. The complete description of torsion pairs by means of bricks was then finally obtained in \cite{EnomotoMonobrick}, where the author built on the notion of simple object in an exact category to obtain a bijection between torsionfree classes and special collections of objects, called monobricks.

The study of bricks also led to the introduction of a new class of algebras, in analogy with the classical family of representation-finite algebras: the \(\tau\)-tilting finite algebras. These are artin algebras with a finite number of torsion pairs, equivalently of isomorphism classes of bricks, so they are also called brick-finite. Several interesting characterisations of these algebras were proven in the literature, starting with \cite{DIJ}, where the concept was introduced, up to some results in analogy with the first Brauer--Thrall conjecture, see \cite{SchrollTreffinger} and \cite{MousavandPaquetteBT}.

More recently, in the second version of the survey \cite{RingelBrickChain}, appeared a characterisation using only elementary properties of the lattice and valid for general abelian length categories: brick-finiteness is equivalent to requiring all the elements of the lattices of torsion and torsionfree classes to be compact. Torsion pairs where both the components are compact in their respective lattice were studied in the recent preprint \cite{AsaiBicompact}, where such pairs are conjectured to correspond to functorially finite pairs in the artin algebra setting.

The aim of this note is to give an elementary proof of a further characterisation of brick-finiteness in the length category context. Recall that a full subcategory is called wide if it is closed under extensions, kernels and cokernels. The connection between torsion pairs and wide subcategories was first explored for hereditary algebras in \cite{IngallsThomas}, then extended to generic finite-dimensional algebras in \cite{MarksStovicek}, where it was shown that for a brick-finite algebra there is a bijection between wide subcategories and torsion classes. A classical result of Ringel gives a bijection between wide subcategories and Hom-orthogonal collections of bricks; such collections, now called semibricks, are the object of \cite{AsaiSemibricks}.

Our main result is the following.
\begin{theorem}\label{thm:intro-main}
Let \(\A\) be an abelian length category with a finite number of simple objects up to isomorphism. Then \(\A\) is brick-finite if and only if for every torsion pair \((\mathbf t,\mathbf f)\) in \(\A\) there exist wide subcategories \(\W\) and \(\W'\) with
\[
\mathbf t=\T(\W),\qquad \mathbf f=\F(\W').
\]
\end{theorem}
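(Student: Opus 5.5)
We prove the two directions separately. Throughout we use the standard facts for abelian length categories recalled from \cite{DIRRT,BCZ}. The lattice of torsion classes is complete, and each cover relation $\mathbf t'\lessdot\mathbf t$ is labelled by a brick $B$ with $\mathbf t=\T(\mathbf t'\cup\{B\})$ and $B\in\mathbf t\cap\mathbf t'^{\perp}$. Every brick labels some cover relation, for instance $\T(B)$ covers $\T(B)\cap{}^{\perp}B$ with label $B$. Moreover, by Ringel's bijection, wide subcategories correspond to semibricks, and $\W=\filt(S)$ for the semibrick $S$ of simple objects of $\W$.

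\emph{Brick-finite implies the condition.} If $\A$ is brick-finite, then there are finitely many torsion classes, since each is determined by the bricks it contains, as $\mathbf t=\T(\text{bricks in }\mathbf t)$. So the lattice is finite and every $\mathbf t$ is a finite join of its lower covers. I will take $\W$ to be the wide subcategory $\filt(S)$, where $S$ is the set of brick labels of the lower covers $\mathbf t_i\lessdot\mathbf t$. The key point is that $S$ is a semibrick, which is the standard ``labels of lower covers are Hom-orthogonal'' statement from \cite{DIRRT,AsaiSemibricks}. Then $\T(\W)=\T(S)\subseteq\mathbf t$. Conversely, if $\T(S)\subsetneq\mathbf t$, then $\T(S)\le\mathbf t_j$ for some lower cover $\mathbf t_j$, because in a finite lattice every element strictly below $\mathbf t$ lies below some lower cover. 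This contradicts $B_j\notin\mathbf t_j$. The dual argument, using upper covers and $\F$, gives $\W'$.

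\emph{The condition implies brick-finiteness.} By Ringel's characterisation in \cite{RingelBrickChain}, it suffices to show that every torsion class is compact and every torsionfree class is compact. Suppose $\mathbf t=\T(\W)$. Since $\A$ has finite rank, $\W=\filt(S)$ with $S$ a semibrick. I expect $S$ to be finite, with $|S|\le n$ where $n$ is the number of simples, by a linear independence argument on classes in $K_0$ for Hom-orthogonal bricks in a wide subcategory. Then $\mathbf t=\T(\bigoplus S)$ is generated by one object, and is therefore compact: if $\mathbf t\le\bigvee\mathbf t_\alpha$, the object $\bigoplus S$, having finite length, lies in a finite subjoin, since a directed union of torsion classes is a torsion class in a length category. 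Dually, every torsionfree class is compact, and Ringel's criterion concludes.

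The main obstacle I anticipate is the finiteness $|S|\le n$. A semibrick in a length category need not have independent classes in general. However, within a wide subcategory $\W$, the elements of $S$ are exactly the simples of the length category $\W$. I would compare the Grothendieck group of $\W$ with that of $\A$, or use the exchange relation for brick labels across wide intervals mentioned in the abstract, to bound the number of simples of $\W$ by the rank of $\A$. If this bound fails, the fallback is to bypass it. Compactness only needs $\mathbf t$ to be generated by a finite set, so I would show directly that a torsion class $\T(\W)$ is the join of finitely many $\T(B)$ with $B\in S$, arguing via the wide interval $[\T(\W)\cap{}^{\perp}\W,\ \T(\W)]$ and induction on the rank.
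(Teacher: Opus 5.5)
Your first direction is correct. It replaces the paper's route through compactness and Proposition~\ref{prop:finitely-cogenerated} with a direct lattice argument. One sentence in it is false: not every \(\mathbf t\) is a finite join of its lower covers, since \(\filt(S)\) for a simple \(S\) has the single lower cover \(0\). You never actually use that sentence, and the argument via \(\T(S)\le\mathbf t_j\) is sound.

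The gap is in the converse, at the bound \(|S|\le n\). No \(K_0\) argument can give it, because such an argument never uses the WD/WCD hypothesis, and without that hypothesis the bound is false. By Ringel's bijection every semibrick is the set of simples of the wide subcategory \(\filt(S)\), so restricting to ``simples of a wide subcategory'' gains nothing. Moreover \(K_0(\W)\to K_0(\A)\) need not be injective. For example, take the Kronecker algebra over an algebraically closed field \(k\), which has rank \(2\). Its quasi-simple regular modules \(R_\lambda\), \(\lambda\in\mathbb P^1(k)\), form an infinite semibrick, and all of them have class \((1,1)\). In fact one checks that the simples of \(\W\) are exactly \(\Low(\T(\W))\). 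So what you need is that every torsion class has at most \(n\) lower covers, and dually for upper covers. That is essentially the whole content of the theorem. The paper's introduction stresses exactly this point: WD/WCD, unlike compactness, does not assume these semibricks are finite.

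Your fallback does not escape the problem. Suppose \(B\in S\) lay in \(\T(B_1\oplus\cdots\oplus B_k)\) with \(B\not\cong B_i\in S\). Every nonzero object of \(\T(X)\) receives a nonzero map from \(X\), so \(\Hom(B_i,B)\ne0\) for some \(i\), contradicting Hom-orthogonality. Hence \(\T(\W)\) is a finite join of classes \(\T(B)\), \(B\in S\), only if \(S\) is already finite. Induction on rank through \([\T(\W)\cap\lperpzero\W,\T(\W)]\) is circular, because all \(|S|\) lower covers of \(\T(\W)\) lie in that interval.

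The missing ingredient is a global use of both hypotheses, which the paper supplies in two steps.
\begin{enumerate}
\item The exchange bijection of Theorem~\ref{thm:full-mutation-labels} works across a wide interval whose lower pair is widely generated and upper pair widely cogenerated. Applied along covers, it shows that every torsion class reachable from \(0\) by a finite saturated chain has exactly \(N\) neighbours (Lemma~\ref{lem:local-finiteness}).
\item The Zorn argument of Lemma~\ref{lem:finite-distance} shows every torsion class is reachable. The meet of a descending chain of unreachable classes without minimum is not co-compact. By WCD and Corollary~\ref{cor:basic-wide-consequences} it therefore has infinitely many upper covers, so by step (1) it is itself unreachable. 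A minimal unreachable class, however, would have a reachable lower cover by WD, a contradiction.
\end{enumerate}
Once this is in place, your reduction does go through. Every torsion class then has finitely many lower and upper labels. By WD, WCD and Propositions~\ref{prop:finitely-cogenerated} and~\ref{prop:compact-finitely-generated}, it is compact and co-compact. Ringel's criterion, or Proposition~\ref{prop:torsConnIsBrickFin}, then finishes the proof.
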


Notice that the new result is stronger than the one using compactness in the following way: assuming every torsion or torsionfree class is compact amounts to assuming that they are all generated or cogenerated by wide subcategories and that every semibrick involved is finite. Our characterisation requires only the existence of the relevant wide subcategories and finiteness of the semibrick of simple objects. As a further remark, the fact that a torsion class is widely generated admits a nice interpretation at the level of the heart of the associated HRS-tilt, for this we refer to \cite[Proposition 5.4]{Sh}.

In the setting of artin algebras it is known that a weaker condition implies brick-finiteness: namely assuming that every torsionfree class is cogenerated by a wide subcategory (\cite[Proposition 5.4]{WideCoreflective}). This is related to the existence of locally minimal/locally maximal non-functorially finite torsion classes for brick-infinite algebras, see \cite{SentieriAuslander}. 

The proof is organised around an exchange relation for brick labels, which seems to be a shadow of the mutation of simple minded collections or more generally semibrick pairs in the derived category (originally introduced in \cite{SimpleMindOrig} for finite-dimensional algebras): 

 As already noticed by \cite{AsaiPfeifer}, some inclusions of torsion classes, the so-called wide intervals, resemble the mutation we have between functorially finite torsion pairs. This was made more precise, when the notion of cosilting mutation was introduced in \cite{ALSV}. 
Assume that
\[
\mathbf t \subsetneq \mathbf u
\]
is such that \( \mathbf{u} \cap \mathbf{t}^{\perp_0} \) is a wide subcategory. Then, under the wide generation/cogeneration hypotheses, it is possible to show that  we have a bijective correspondence between neighbours of \( \mathbf{t} \)  and neighbours of \( \mathbf{u} \) in the Hasse quiver of \( \tors(\A) \). This bijection can be explicitly computed in terms of suitable approximation sequences for brick labels.

This bijection also recovers the regularity of the Hasse diagram of \( \tors(\A) \) for brick-finite categories.

An immediate consequence of the main theorem is the following generalisation of the first-Brick Brauer Thrall Theorem 

\begin{theorem}\label{thm:intro-bbt}
Let \(\A\) be an abelian length category with a finite number of simple objects up to isomorphism. Then \(\A\) is brick-finite if and only if there exists some \(N\in\mathbb N\) such that the length of every brick in \(\A\) is less than \(N\).
\end{theorem}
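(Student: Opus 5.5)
The forward implication is immediate: if $\A$ is brick-finite, there are finitely many bricks up to isomorphism, and $N$ can be taken to be one more than the largest of their lengths. All the work is in the converse. Assume every brick has length $<N$. I would not try to count bricks directly, because bounded length does not obviously bound the number of bricks (over an infinite field there may be infinitely many bricks of a fixed length). Instead I would verify the right-hand side of Theorem~\ref{thm:intro-main}: for every torsion pair $(\mathbf t,\mathbf f)$, the class $\mathbf t$ is $\T(\W)$ for a wide $\W$ and $\mathbf f$ is $\F(\W')$ for a wide $\W'$. The torsionfree statement is dual, obtained by reversing all arrows. So I concentrate on $\mathbf t$.

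\textbf{Candidate wide subcategory.} The natural choice is the left wide part
\[
\W_L(\mathbf t)=\{X\in\mathbf t : \text{for every } f\colon Y\to X \text{ with } Y\in\mathbf t,\ \ker f\in\mathbf t\}.
\]
This is a wide subcategory contained in $\mathbf t$. Its simple objects form a semibrick $S$, namely the bricks of $\mathbf t$ that are ``torsion-simple''. I want to prove $\T(S)=\mathbf t$, and I would argue by contradiction. Suppose $\mathbf t':=\T(S)\subsetneq\mathbf t$. By the brick-labelling results of \cite{DIRRT} recalled earlier, there is then an arrow in the Hasse quiver of $\tors(\A)$ inside the interval $[\mathbf t',\mathbf t]$, labelled by a brick $B\in\mathbf t\cap\mathbf t'^{\perp_0}$. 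From $B$ I would build a sequence of bricks $B=B_0,B_1,B_2,\dots$ in $\mathbf t$ by repeatedly applying the failure of $B_i\in\W_L(\mathbf t)$:
\begin{itemize}[leftmargin=*]
\item Choose $f\colon Y\to B_i$ with $Y\in\mathbf t$ and $\ker f\notin\mathbf t$.
\item Take the torsion-free quotient of $\ker f$ (nonzero since $\ker f\notin\mathbf t$); it yields a brick label in $\mathbf f$.
\item Apply the exchange relation for brick labels across the wide interval to obtain a new brick $B_{i+1}\in\mathbf t$.
\end{itemize}
The approximation sequences appearing in the exchange relation should show that $B_{i+1}$ is an extension involving $B_i$, so that $\ell(B_{i+1})>\ell(B_i)$.

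\textbf{Conclusion and obstacle.} Once lengths strictly increase, the process cannot continue past $N$ steps. Hence it must stop at a brick lying in $\W_L(\mathbf t)\cap\mathbf t'^{\perp_0}$. But every object of $\W_L(\mathbf t)$ is filtered by $S\subseteq\mathbf t'$, so such a brick lies in $\mathbf t'$ and also in $\mathbf t'^{\perp_0}$, hence is zero. This contradicts $B_i\neq0$, so $\T(S)=\mathbf t$. With both halves established, Theorem~\ref{thm:intro-main} gives brick-finiteness.

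The step I expect to be the main obstacle is the strict length increase $\ell(B_{i+1})>\ell(B_i)$. One must check that the brick produced by the exchange relation genuinely contains $B_i$ as a proper subquotient and is not a smaller brick. If this fails, my fallback is different. I would use the finite-rank hypothesis to reduce to finitely many dimension vectors of length $<N$ in the Grothendieck group. I would then argue that an infinite strictly decreasing chain of torsion classes, whose labels all carry one of these finitely many classes, contradicts completeness and semidistributivity of $\tors(\A)$.
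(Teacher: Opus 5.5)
\textbf{There is a genuine gap.} The reduction to Theorem~\ref{thm:intro-main} is the right one. \(\W_L(\mathbf t)\) is also a sensible candidate: its simple objects are exactly the torsion, almost torsionfree objects of \((\mathbf t,\mathbf f)\), so \(\T(\W_L(\mathbf t))=\mathbf t\) is precisely wide generation. The gap is the step that produces \(B_{i+1}\) from \(B_i\).

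You never say to which wide interval the exchange relation is applied, or why its hypotheses hold. Proposition~\ref{prop:exchange-algo}, Proposition~\ref{prop:lower-exchange} and Theorem~\ref{thm:full-mutation-labels} all assume that the lower torsion class is widely generated; the reverse direction assumes the upper torsionfree class is widely cogenerated. Under bounded brick length alone, that is exactly the kind of statement you are trying to prove. You do not identify intervals where these hypotheses hold at every step of your iteration.

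Even granting the hypotheses, the exchange does not increase length. In case (1) of Corollary~\ref{cor:up-exchange} the new label is the subobject \(\ker e_V\) of \(V\), in general a proper one. The remaining steps are also unsupported:
\begin{itemize}
\item the torsionfree quotient of \(\ker f\) is merely a nonzero object of \(\mathbf f\), not a brick or a label;
\item nothing keeps the \(B_i\) inside \((\mathbf t')^{\perp_0}\), which your final contradiction needs;
\item the fallback is not an argument, since finitely many classes in the Grothendieck group say nothing about the number of bricks (the very point you raised), and no link to semidistributivity is given.
\end{itemize}

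The missing idea is that no exchange relation is needed. The chain of increasing length should be built among relative simples. Suppose \(X\) is \(\mathbf t\)-simple but not torsion, almost torsionfree.
\begin{itemize}
\item The dual of Proposition~\ref{prop:maximal-f-simple} gives a \(\mathbf t\)-simple \(X_1\) and a non-zero non-isomorphism \(X_1\to X\).
\item By Lemma~\ref{lem:relative-simple-brick} this map is an epimorphism, hence a proper one, so \(\ell(X_1)>\ell(X)\).
\item Relative simples are bricks, so the bound \(N\) stops the iteration at a torsion, almost torsionfree object that surjects onto \(X\).
\item The dual of Proposition~\ref{prop:wide-cogenerated-criterion} then gives wide generation of \(\mathbf t\); the torsionfree side is dual.
\end{itemize}
This is exactly the paper's proof.

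Your contradiction framing can be kept if you prefer. A nonzero \(Y\in\mathbf t\cap(\mathbf t')^{\perp_0}\) has a \(\mathbf t\)-simple subobject by the dual of Theorem~\ref{thm:f-simples-filter}. That subobject lies in \((\mathbf t')^{\perp_0}\), yet by the argument above it is a quotient of an object of \(S\subseteq\mathbf t'\), which is impossible.
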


\noindent\textbf{Acknowledgments.} The author wishes to thank Lidia Angeleri H\"ugel for her valuable comments and suggestions.

\begin{notation}\label{not:standing}
Throughout this paper, \(\A\) denotes an abelian length category, i.e. an essentially small abelian category where every object admits a (finite) composition series. We say that \(\A\) has finite rank if it has only finitely many simple objects up to isomorphism. Unless otherwise stated, all subcategories are full and closed under isomorphism.
An object \( B \) is called a brick if its endomorphism ring is a skew-field.

For a class of objects \(\mathcal S\) we denote by:
\begin{itemize}[leftmargin=2.5em]
\item \(\filt(\mathcal S)\) the full subcategory of all objects admitting a finite filtration with factors in \(\mathcal S\);
\item \(\gen(\mathcal S)\) the full subcategory of all quotients of finite direct sums of objects in \(\mathcal S\);
\item \(\cogen(\mathcal S)\) the full subcategory of all subobjects of finite direct sums of objects in \(\mathcal S\);
\item \(\lperpzero\mathcal S=\{A\in\A\mid \Hom_\A(A,S)=0\text{ for all }S\in\mathcal S\}\);
\item \(\mathcal S^{\perp_0}=\{A\in\A\mid \Hom_\A(S,A)=0\text{ for all }S\in\mathcal S\}\);
\item \(\T(\mathcal S)=\filt\gen(\mathcal S)\), the smallest torsion class containing \(\mathcal S\);
\item \(\F(\mathcal S)=\filt\cogen(\mathcal S)\), the smallest torsionfree class containing \(\mathcal S\).
\end{itemize}
We write \(\elll(X)\) for the composition length of an object \(X\in\A\).
\end{notation}

\section{Preliminaries}

We recall some preliminary results from the literature. For a self-contained survey on torsion pairs and bricks in the generality of abelian length categories we refer to \cite{RingelBrickChain}. For a more global account on recent developments and connections with adjacent topics, like stability conditions, we suggest \cite{MousavandPaquetteSurvey}. The notion of torsion pair was originally introduced in \cite{Dickson}.

\subsection{Torsion pairs and relative simples}

\begin{definition}
A pair of subcategories \((\mathbf t,\mathbf f)\) of \(\A\) is a torsion pair if:
\begin{enumerate}[label=\textup{(\arabic*)},leftmargin=2.5em]
\item \(\Hom_\A(T,F)=0\), for all \(T\in\mathbf t\) and all \(F\in\mathbf f\);
\item for every object \(M\in\A\), there exists a short exact sequence
\[
0\longrightarrow T\longrightarrow M\longrightarrow F\longrightarrow 0
\]
with \(T\in\mathbf t\) and \(F\in\mathbf f\).
\end{enumerate}
In a torsion pair, \(\mathbf t\) is called the torsion class and \(\mathbf f\) is called the torsionfree class.
\end{definition}

Recall that a subcategory is a torsion class if and only if it is closed under quotients and extensions, and it is a torsionfree class if and only if it is closed under subobjects and extensions. Moreover
\[
\mathbf t={}^{\perp_0}\mathbf f,
\qquad
\mathbf f=\mathbf t^{\perp_0}.
\]

\begin{definition}
Let \((\mathbf t,\mathbf f)\) be a torsion pair. A non-zero object \(F\in\mathbf f\) is said to be \(\mathbf f\)-simple if all its proper quotients are in \(\mathbf t\). Dually, a non-zero object \(T\in\mathbf t\) is said to be \(\mathbf t\)-simple if all its proper subobjects are in \(\mathbf f\).
\end{definition}

\begin{lemma}[{\cite{EnomotoMonobrick}}]\label{lem:relative-simple-brick}
The following statements hold.
\begin{enumerate}[label=\textup{(\arabic*)},leftmargin=2.5em]
\item If an object is \(\mathbf f\)-simple for some torsion pair, then it is a brick.
\item If \(B\) is a brick, then \(B\) is \(\F(B)\)-simple.
\item An object \(B\neq 0\) is \(\mathbf f\)-simple if and only if every non-zero map \(B\to F\) with \(F\in\mathbf f\) is a monomorphism.
\end{enumerate}
Dually, a non-zero object \(B\in\mathbf t\) is \(\mathbf t\)-simple if and only if every non-zero map \(T\to B\) with \(T\in\mathbf t\) is an epimorphism.
\end{lemma}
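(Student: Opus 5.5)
We prove (3) first and then derive (1) and (2) from it; the dual statement is obtained by passing to the opposite category, where $(\mathbf t,\mathbf f)$ becomes the torsion pair $(\mathbf f^{op},\mathbf t^{op})$.

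\emph{Proof of (3).} Suppose $B$ is $\mathbf f$-simple and let $g\colon B\to F$ be non-zero with $F\in\mathbf f$.
\begin{itemize}
\item The image $\operatorname{im} g$ is a subobject of $F$, so it lies in $\mathbf f$, since torsionfree classes are closed under subobjects.
\item $\operatorname{im} g$ is a non-zero quotient of $B$ lying in $\mathbf f$. It is not in $\mathbf t$, because $\mathbf t\cap\mathbf f=0$.
\item Every proper quotient of $B$ lies in $\mathbf t$, so $\operatorname{im} g$ is not a proper quotient. Hence $\ker g=0$ and $g$ is a monomorphism.
\end{itemize}

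Conversely, suppose $B\in\mathbf f$ and every non-zero map from $B$ to $\mathbf f$ is mono. Let $B\to Q$ be a proper quotient, and take the canonical sequence $0\to T\to Q\to F'\to 0$.
\begin{itemize}
\item The composite $B\to Q\to F'$ is a map to $\mathbf f$ with non-zero kernel, since $B\to Q$ is proper.
\item By hypothesis it cannot be a non-zero mono, so it is zero.
\item The composite is also an epimorphism, so $F'=0$ and $Q=T\in\mathbf t$.
\end{itemize}
Thus $B$ is $\mathbf f$-simple. Note that membership $B\in\mathbf f$ is part of the definition of $\mathbf f$-simple and is assumed in this direction.

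\emph{Proof of (1).} Let $B$ be $\mathbf f$-simple and let $\varphi\colon B\to B$ be non-zero. Since $B\in\mathbf f$, part (3) shows that $\varphi$ is a monomorphism. In a length category, a monomorphism $B\to B$ preserves composition length, so its cokernel has length $0$ and $\varphi$ is an isomorphism. Every non-zero endomorphism is therefore invertible, so $\operatorname{End}(B)$ is a skew-field and $B$ is a brick.

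\emph{Proof of (2).} Let $B$ be a brick. We verify the criterion of (3) for the torsionfree class $\F(B)=\filt\cogen(B)$.

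First reduce to subobjects of sums. Let $g\colon B\to F$ be non-zero with $F\in\filt\cogen(B)$, and take a filtration $0=F_0\subset\cdots\subset F_n=F$ whose factors lie in $\cogen(B)$. Let $i$ be minimal with $\operatorname{im} g\subseteq F_i$. The composite $B\to F_i\to F_i/F_{i-1}$ is then non-zero. Moreover, $\ker g\subseteq\ker\bigl(B\to F_i/F_{i-1}\bigr)$. Therefore it suffices to show: every non-zero map $h\colon B\to X$ with $X\subseteq B^m$ is a monomorphism.

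For this, compose $h$ with the inclusion $X\hookrightarrow B^m$ and with a projection $B^m\to B$ chosen so that the composite is non-zero. This composite is a non-zero endomorphism of the brick $B$, hence an isomorphism, hence injective. So $h$ is injective.

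This shows that $B$ is $\F(B)$-simple, since $B\in\F(B)$. The main point, and the step that needs care, is this reduction through the filtration. It works because $\ker g$ is contained in the kernel of the induced map to the relevant filtration factor.
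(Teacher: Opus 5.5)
Your proof is correct and complete. The paper gives no argument of its own for this lemma; it quotes it from \cite{EnomotoMonobrick}, and your route is the standard one: the image of a non-zero map out of an $\mathbf f$-simple object is a non-zero quotient lying in $\mathbf f$, hence not proper; length forces injective endomorphisms to be invertible; and for $\F(B)$ you reduce to the first filtration factor met by the image, then compose with a projection $B^m\to B$. You were also right to require $B\in\mathbf f$ in the converse of (3): as literally stated the equivalence fails, since any non-zero $B\in\mathbf t$ satisfies the monomorphism condition vacuously, which is why the dual formulation explicitly assumes $B\in\mathbf t$.
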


\begin{theorem}[{\cite[Theorem 2.11]{EnomotoMonobrick}}]\label{thm:f-simples-filter}
Every torsionfree class \(\mathbf f\) is determined by its \(\mathbf f\)-simple objects. More precisely, every object in \(\mathbf f\) admits a filtration by \(\mathbf f\)-simples.
\end{theorem}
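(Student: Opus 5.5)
The plan is to prove the filtration statement by induction on the composition length \(\elll(X)\) of a non-zero object \(X\in\mathbf f\). Since \(\mathbf f\) is closed under subobjects and extensions, the filtration can be built from the bottom by peeling off a suitable subobject. The second sentence of the statement implies the first: two torsionfree classes with the same \(\mathbf f\)-simples both equal \(\filt\) of that collection, because \(\filt\) of objects of \(\mathbf f\) stays in \(\mathbf f\) by extension-closure.

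For the inductive step, take \(0\neq X\in\mathbf f\). If \(X\) is \(\mathbf f\)-simple there is nothing to do. Otherwise some proper quotient of \(X\) is not in \(\mathbf t\), so some proper non-zero quotient has a non-zero torsionfree part. Equivalently, there is a non-zero map \(X\to F\) with \(F\in\mathbf f\) that is not a monomorphism, by Lemma~\ref{lem:relative-simple-brick}(3).

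Among all non-zero, non-injective maps \(X\to F\) with \(F\in\mathbf f\), I choose one whose image \(I\subseteq F\) has minimal length. Note that \(I\in\mathbf f\), being a subobject of \(F\), and that \(0\neq I\) is a proper quotient of \(X\). Let \(K=\ker(X\to I)\), so \(0\ne K\subsetneq X\) and \(K\in\mathbf f\). Both \(K\) and \(I\) have length less than \(\elll(X)\), and both lie in \(\mathbf f\). By induction each has a filtration by \(\mathbf f\)-simples. Splicing these along
\[
0\to K\to X\to I\to 0
\]
gives a filtration of \(X\), since the preimage in \(X\) of a filtration of \(I\), stacked on top of a filtration of \(K\), has the same successive factors. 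So minimality of \(I\) is not even needed.

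The only real point is the existence of the short exact sequence with both outer terms non-zero and in \(\mathbf f\). The key observation is that \(X\) not \(\mathbf f\)-simple yields a proper quotient \(X/U\), with \(U\neq 0\), that is not in \(\mathbf t\). Its torsionfree quotient \(I=(X/U)/t(X/U)\) is then a non-zero object of \(\mathbf f\). The kernel of \(X\to I\) contains \(U\), so it is non-zero, and it lies in \(\mathbf f\) as a subobject of \(X\). This is the step to check carefully, and it uses only the torsion pair axioms. The length category hypothesis guarantees that the induction terminates.
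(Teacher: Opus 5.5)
Your proof is correct. For \(X\in\mathbf f\) that is not \(\mathbf f\)-simple, the torsionfree quotient of a proper quotient \(X/U\notin\mathbf t\) gives a short exact sequence \(0\to K\to X\to I\to 0\) with \(K\) and \(I\) non-zero in \(\mathbf f\); strong induction on length and splicing the two filtrations then finish the argument, and the minimal-length choice of the image is indeed superfluous, as you note.

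The paper gives no proof of its own and only cites Enomoto. Enomoto's argument is essentially yours: induction on a length function that is additive on short exact sequences in the exact category \(\mathbf f\), after identifying \(\mathbf f\)-simples with objects admitting no non-trivial conflation in \(\mathbf f\). So you follow the standard route, carried out directly with the torsion pair.
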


\begin{definition}
 A set of bricks $ \mathcal{B} $ is a \emph{semibrick} if non-isomorphic elements of $ \mathcal{B} $ are Hom-orthogonal.
\end{definition}

\begin{definition}[\cite{AngeleriHerzogLaking}, \cite{BCZ}]
Let \((\mathbf t,\mathbf f)\) be a torsion pair. An object \(B\) is torsionfree, almost torsion with respect to \((\mathbf t,\mathbf f)\) if:
\begin{enumerate}[label=\textup{(\arabic*)},leftmargin=2.5em]
\item \(B\) is \(\mathbf f\)-simple;
\item for every object \(F\in\mathbf f\) and every map \(B\to F\), the cokernel is in \(\mathbf f\).
\end{enumerate}
Equivalently, condition \textup{(2)} can be replaced by:
\begin{enumerate}[label=\textup{(2')},leftmargin=2.5em]
\item for every non-split short exact sequence
\[
0\longrightarrow B\longrightarrow M\longrightarrow T\longrightarrow 0
\]
with \(T\in\mathbf t\), one has \(M\in\mathbf t\).
\end{enumerate}
Dually, one defines torsion, almost torsionfree objects.
\end{definition}

Torsionfree, almost torsion objects are the maximal relative simples in the following sense.

\begin{proposition}[{\cite[Lemma 4.4]{EnomotoMonobrick}}]\label{prop:maximal-f-simple}
An \(\mathbf f\)-simple object \(B\) is torsionfree, almost torsion if and only if, for every \(\mathbf f\)-simple object \(B'\), we have
\[
\Hom_\A(B,B')\neq 0
\quad\Longleftrightarrow\quad
B'\simeq B.
\]
In particular, the torsionfree, almost torsion objects for a torsion pair form a semibrick. Dually, the torsion, almost torsionfree objects form a semibrick.
\end{proposition}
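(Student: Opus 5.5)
Both directions of the equivalence go through condition (3) of Lemma~\ref{lem:relative-simple-brick}: a non-zero object \(B\) is \(\mathbf f\)-simple exactly when every non-zero map \(B\to F\) with \(F\in\mathbf f\) is a monomorphism. The ``in particular'' part then follows directly.

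\emph{Forward direction.} Let \(B\) be torsionfree, almost torsion, and let \(B'\) be \(\mathbf f\)-simple with a non-zero map \(g\colon B\to B'\). Since \(B'\in\mathbf f\) and \(B\) is \(\mathbf f\)-simple, \(g\) is a monomorphism. By condition (2), \(\coker g\in\mathbf f\). But \(\coker g\) is a quotient of \(B'\), and every proper quotient of the \(\mathbf f\)-simple object \(B'\) lies in \(\mathbf t\). If \(\coker g\neq 0\), it is a proper quotient, so it lies in \(\mathbf t\cap\mathbf f=0\), a contradiction. Hence \(\coker g=0\) and \(g\) is an isomorphism. The converse implication \(B'\simeq B\Rightarrow\Hom(B,B')\neq0\) is trivial.

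\emph{Reverse direction.} Assume the Hom condition and take any \(h\colon B\to F\) with \(F\in\mathbf f\); we must show \(\coker h\in\mathbf f\). If \(h=0\), then \(\coker h=F\in\mathbf f\). Otherwise \(h\) is a monomorphism. Let \(C=\coker h\) and take its torsion sequence \(0\to tC\to C\to C/tC\to0\). Pull back along \(F\to C\) to get \(F'\subseteq F\), which contains \(B\) and satisfies \(F'/B\simeq tC\). Then \(F'\in\mathbf f\), being a subobject of \(F\), and it suffices to show \(F'=B\).

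To do this, use Theorem~\ref{thm:f-simples-filter}: \(F'\) admits a filtration by \(\mathbf f\)-simples. I would rather choose an \(\mathbf f\)-simple subobject of \(F'\) not contained in \(B\) and produce a non-zero map from \(B\) to a non-isomorphic \(\mathbf f\)-simple. The cleaner route is the following.

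Among subobjects \(B\subsetneq X\subseteq F'\) with \(X/B\in\mathbf t\) (every such quotient is in \(\mathbf t\), being a subobject of \(tC\)? No — subobjects of torsion objects need not be torsion). So instead take \(X\) minimal with \(B\subsetneq X\subseteq F'\); this exists by finite length. Then \(X/B\) is simple, and hence a quotient of \(tC\)? That also fails in general.

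At this point I would switch to the equivalent formulation (2'). Take a non-split extension \(0\to B\to M\to T\to0\) with \(T\in\mathbf t\), and suppose \(M\notin\mathbf t\). Let \(M\to fM\) be the torsionfree quotient, which is non-zero. The composite \(B\to fM\) is non-zero: otherwise \(fM\) would be a quotient of \(T\), hence torsion and so zero. Therefore this composite is a monomorphism. Now filter \(fM\) by \(\mathbf f\)-simples and take an \(\mathbf f\)-simple quotient \(B'\) of \(fM\) through which \(B\) maps non-zero. Such a \(B'\) exists: take the top factor of a filtration whose last term does not contain the image of \(B\), and use minimality to arrange it. By hypothesis \(B'\simeq B\), and the composite \(B\to B'\simeq B\) is a non-zero endomorphism of a brick, hence an isomorphism. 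This splits \(B\to fM\), and therefore splits \(B\to M\), contradicting non-splitness. So \(M\in\mathbf t\).

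The main obstacle is exactly this step: producing an \(\mathbf f\)-simple quotient of \(fM\) that receives a non-zero map from \(B\). I would get it by choosing a subobject \(K\subseteq fM\) maximal with \(K\in\mathbf f\), \(fM/K\) non-zero \(\mathbf f\)-simple and \(B\not\subseteq K\). Existence should follow from Theorem~\ref{thm:f-simples-filter} applied to quotients \(fM/K\) that lie in \(\mathbf f\), iterated by induction on length.

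The ``in particular'' statement is then immediate. Two torsionfree, almost torsion objects are \(\mathbf f\)-simple, hence bricks by Lemma~\ref{lem:relative-simple-brick}(1), and by the equivalence just proved they are Hom-orthogonal unless isomorphic. The torsion, almost torsionfree case follows by duality.
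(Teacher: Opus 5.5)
Your forward direction and the semibrick consequence are correct. The strategy for the converse via (2') is also sound: assume $M\notin\mathbf t$, show $B\hookrightarrow fM$, and split $B$ off using an $\mathbf f$-simple quotient. (The paper only cites this result from Enomoto and gives no proof, so I am judging your argument on its own.)

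There is a genuine gap at exactly the step you flag as the main obstacle: the existence of an $\mathbf f$-simple quotient $q\colon fM\to B'$ with $B\to fM\xrightarrow{q}B'$ non-zero. Your suggested justification is a maximal $K$ whose existence ``should follow'' from Theorem~\ref{thm:f-simples-filter} by induction on length, and it does not work. Peeling off top factors of a filtration only yields an $\mathbf f$-simple \emph{subquotient} of $fM$ receiving a non-zero map from $B$, and that is useless for splitting. For an arbitrary $F\in\mathbf f$ containing $B$ the claim is in fact false. Take $\mathbf t=0$, $\mathbf f=\A$, and $F$ uniserial of length two with socle $B$. The only $\mathbf f$-simple quotient of $F$ is its top, and $B$ maps to zero there. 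So existence must come from the specific shape of $fM$, which your argument never uses.

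The missing observation is that
\[
\coker(B\to fM)\cong M/(B+tM)
\]
is a quotient of $M/B=T$, hence lies in $\mathbf t$. Now take any $\mathbf f$-simple quotient $q\colon fM\to B'$, for instance the top factor of a filtration of $fM$ by $\mathbf f$-simples. The composite $B\to fM\xrightarrow{q}B'$ cannot vanish: otherwise $B'$ would be a non-zero quotient of a torsion object lying in $\mathbf f$. So no maximality or induction is needed, and the rest of your splitting argument goes through.

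The same observation also rescues the route via condition (2) that you abandoned. There $F'\in\mathbf f$ with $F'/B\simeq tC\in\mathbf t$. Hence any $\mathbf f$-simple quotient of $F'$ receives a non-zero map from $B$, so it is isomorphic to $B$ and $B$ splits off $F'$. Then $tC$ embeds in $F'\in\mathbf f$, giving $tC\in\mathbf t\cap\mathbf f=0$ and $\coker h\in\mathbf f$.

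You should also delete the false starts (``No --- \dots'', ``That also fails in general'') from the write-up.
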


We will use the following Ext-orthogonality several times. 

\begin{lemma}\label{lem:ext-orthogonality}
Let \((\mathbf t,\mathbf f)\) be a torsion pair, let \(T\) be torsion, almost torsionfree, and let \(F\) be torsionfree, almost torsion for this torsion pair. Then
\[
\Ext^1_\A(T,F)=0.
\]
\end{lemma}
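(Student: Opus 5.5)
We must prove that $\Ext^1_\A(T,F)=0$ when $T$ is torsion, almost torsionfree and $F$ is torsionfree, almost torsion for the same torsion pair $(\mathbf t,\mathbf f)$. The plan is to take an arbitrary extension
\[
0\longrightarrow F\longrightarrow M\longrightarrow T\longrightarrow 0
\]
and show it splits. We argue by contradiction: assume it is non-split and use the two almost-torsion conditions to place $M$ in both $\mathbf t$ and $\mathbf f$.

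First, apply condition (2') of the definition of torsionfree, almost torsion to $F$. Since $T\in\mathbf t$ and the sequence is non-split, this gives $M\in\mathbf t$.

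Next, apply the dual statement to $T$. The dual of (2') says: for every non-split short exact sequence $0\to F'\to M'\to T\to 0$ with $F'\in\mathbf f$, one has $M'\in\mathbf f$. This is the formal dual of the characterisation for $F$, obtained by passing to the opposite category, where torsion and torsionfree classes swap. Our sequence has $F\in\mathbf f$ and is non-split, so $M\in\mathbf f$.

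Hence $M\in\mathbf t\cap\mathbf f$. Since $\Hom(\mathbf t,\mathbf f)=0$, the identity of $M$ vanishes, so $M=0$. But $F\neq 0$, being $\mathbf f$-simple, and $F$ embeds in $M$, which is a contradiction. Therefore every extension splits and $\Ext^1_\A(T,F)=0$.

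The only point needing care is that the dual of condition (2') is exactly the statement used for $T$, namely that its hypothesis is a sequence with $T$ as the cokernel and a torsionfree kernel. If one prefers not to rely on (2') directly, both facts can be derived from condition (2) and its dual. The cokernel of $F\to M$ is $T$, and $M\to T$ is an epimorphism; to conclude $M\in\mathbf t$, take the torsion decomposition $0\to tM\to M\to fM\to 0$ and show that the composite $F\to M\to fM$ either makes the sequence split or forces $fM=0$. I would cite (2') as stated in the paper rather than redo this argument.
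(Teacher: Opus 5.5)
Your proof is correct and follows essentially the same route as the paper: assume a non-split extension $0\to F\to M\to T\to 0$, use (2') for $F$ to get $M\in\mathbf t$, and then use the almost torsionfree property of $T$ to reach a contradiction. The only cosmetic difference is in that last step: the paper applies the dual of condition (2) to the map $M\to T$ to get $F\in\mathbf t\cap\mathbf f=0$, whereas you apply the dual of (2') to get $M\in\mathbf t\cap\mathbf f=0$.
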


\begin{proof}
We give a short proof for the convenience of the reader. Let 
\[
0 \to F \to M \to T \to 0
\]
be a short exact sequence. Then, if the sequence were not split, as $ F $ is torsionfree, almost torsion, $ M $ would have to be in $ \mathbf{t} $. But then, as $ T $ is torsion, almost torsionfree, we would have that $ F \in \mathbf{t} $. This would imply $ F = 0 $, which is a contradiction.
\end{proof}

\subsection{Wide subcategories and compactness}

\begin{definition}
A subcategory \(\W\subseteq\A\) is wide if it is closed under kernels, cokernels and extensions.

A torsion pair \((\mathbf t,\mathbf f)\) is widely cogenerated if \(\mathbf f=\F(\W)\) for some wide subcategory \(\W\). Dually, it is widely generated if \(\mathbf t=\T(\W)\) for some wide subcategory \(\W\).
\end{definition}

\begin{theorem}[{\cite[Section 1.2]{RingelSpecies}}]\label{thm:ringel-semibricks}
There is a bijection between semibricks in \(\A\) and wide subcategories of \(\A\). This bijection associates with a semibrick \(\mathcal S\) the extension closure \(\filt(\mathcal S)\). Its inverse associates with every wide subcategory the collection of its simple objects.
\end{theorem}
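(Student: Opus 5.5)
The statement is Ringel's classical bijection, so I would check two things: that \(\filt(\mathcal S)\) is wide with simple objects exactly \(\mathcal S\), and that every wide subcategory \(\W\) equals \(\filt\) of its simples, which form a semibrick.

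\emph{Step 1: wide subcategories give semibricks.} Let \(\W\) be wide. Then \(\W\) is itself an abelian category, with kernels and cokernels computed in \(\A\). Every object of \(\W\) has finite length in \(\A\), hence finite length in \(\W\). Let \(S\) be simple in \(\W\). A nonzero endomorphism of \(S\) has kernel and image in \(\W\), so it is injective and has full image. Thus it is an isomorphism, and Schur's lemma in \(\W\) shows \(S\) is a brick. The same argument applied to a nonzero map \(S\to S'\) between two simples of \(\W\) shows the map is an isomorphism. So non-isomorphic simples are Hom-orthogonal, and the simples form a semibrick \(\mathcal S\). A composition series of \(X\in\W\) taken inside \(\W\) gives \(\W\subseteq\filt(\mathcal S)\). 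Extension-closure gives the reverse inclusion.

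\emph{Step 2: semibricks give wide subcategories.} Let \(\mathcal S\) be a semibrick and \(\W=\filt(\mathcal S)\); it is extension-closed by construction. The key claim, and the main obstacle, is that for \(X,Y\in\W\) and any \(f\colon X\to Y\), both \(\ker f\) and \(\coker f\) lie in \(\W\). I would argue by induction on \(\ell(X)+\ell(Y)\).

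\begin{enumerate}[label=\textup{(\alph*)},leftmargin=2.5em]
\item Choose \(S\in\mathcal S\) with a monomorphism \(S\hookrightarrow X\), taken as the first term of a filtration, and set \(X'=X/S\in\W\).
\item Consider \(f|_S\colon S\to Y\). Filter \(Y\) with top factor \(S'\in\mathcal S\), so there is \(0\to Y''\to Y\to S'\to 0\) with \(Y''\in\W\). The composite \(S\to S'\) is zero or an isomorphism by Hom-orthogonality and the brick property.
\item If the composite is an isomorphism, then \(S\to Y\) is a split mono, with \(Y\cong S\oplus Y''\). Hence \(f\) factors through \(X'\to Y''\) up to a harmless correction, and induction applies.
\item If the composite is zero, \(f|_S\) lands in \(Y''\), and I iterate. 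The outcome is that \(f|_S\) is either zero or a monomorphism with cokernel in \(\W\). Cleanly, this is the sub-lemma: every nonzero map \(S\to Y\) with \(Y\in\W\) is mono with cokernel in \(\W\), proved by induction on the length of a filtration of \(Y\).
\item With the sub-lemma: if \(f|_S=0\), then \(f\) factors through \(X'\). Here \(\coker f=\coker(X'\to Y)\), and \(\ker f\) is an extension of \(S\) by \(\ker(X'\to Y)\), so induction concludes.
\item If \(f|_S\) is mono, set \(Y_1=Y/S\in\W\). Then \(f\) induces \(\bar f\colon X'\to Y_1\) with \(\ker\bar f\cong\ker f\) (snake lemma) and \(\coker\bar f=\coker f\), and induction applies again.
\end{enumerate}

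\emph{Step 3: the two maps are mutually inverse.} Step 1 already shows \(\W=\filt(\text{simples of }\W)\). It remains to see that the simples of \(\filt(\mathcal S)\) are exactly \(\mathcal S\).

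\begin{itemize}[leftmargin=2.5em]
\item Each \(S\in\mathcal S\) is simple in \(\W\). A proper nonzero subobject \(U\subset S\) in \(\W\) would contain some \(S'\in\mathcal S\), giving a nonzero non-iso map \(S'\to S\). This contradicts the sub-lemma, since a mono \(S'\to S\) with cokernel in \(\W\) would have to be an isomorphism by a length and orthogonality argument.
\item Conversely, any simple of \(\W\) has a filtration by elements of \(\mathcal S\), whose first term is a subobject in \(\W\). So the simple must be that element of \(\mathcal S\).
\end{itemize}
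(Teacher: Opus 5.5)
The paper gives no proof of this statement (it is quoted from Ringel), so there is nothing in the paper to compare against. Your argument is correct, and it is the standard length-induction proof of Ringel's lemma.

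The sub-lemma in (d) is where the semibrick hypothesis enters: a nonzero map from a member of $\mathcal S$ to an object of $\filt(\mathcal S)$ is a monomorphism with cokernel in $\filt(\mathcal S)$. Cases (e) and (f) then close the induction on $\ell(X)+\ell(Y)$ correctly. In (f), the snake lemma with the identity on $S$ gives $\ker f\cong\ker\bar f$ and $\coker f\cong\coker\bar f$.

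Two tidy-ups are worth making.
\begin{enumerate}[label=\textup{(\roman*)},leftmargin=2.5em]
\item The tentative case (c), with its ``harmless correction'', can simply be dropped. Case (f) already covers it, since there $Y/S\cong Y''$.
\item In Step 3 you need neither the sub-lemma nor a length argument. A nonzero map $S'\to S$ between members of a semibrick is an isomorphism directly from the definition: it is zero if $S'\not\cong S$, and invertible otherwise because the endomorphism ring of $S$ is a skew field. This immediately contradicts $U$ being a proper subobject.
\end{enumerate}
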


\begin{proposition}[{\cite[Proposition 5.2]{EnomotoMonobrick}}]\label{prop:wide-cogenerated-criterion}
Let \((\mathbf t,\mathbf f)\) be a torsion pair. The following statements are equivalent.
\begin{enumerate}[label=\textup{(\arabic*)},leftmargin=2.5em]
\item \((\mathbf t,\mathbf f)\) is widely cogenerated.
\item Every \(\mathbf f\)-simple object is a subobject of some torsionfree, almost torsion object for \((\mathbf t,\mathbf f)\).
\item \((\mathbf t,\mathbf f)\) is widely cogenerated by the extension closure of its torsionfree, almost torsion objects.
\end{enumerate}
Dually, \((\mathbf t,\mathbf f)\) is widely generated if and only if every \(\mathbf t\)-simple object is a quotient of some torsion, almost torsionfree object.
\end{proposition}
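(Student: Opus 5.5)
The plan is to prove the cycle $(1)\Rightarrow(2)\Rightarrow(3)\Rightarrow(1)$; the dual statement then follows by applying the same argument in $\A^{\mathrm{op}}$. The implication $(3)\Rightarrow(1)$ is trivial, since the extension closure of a semibrick is wide by Theorem~\ref{thm:ringel-semibricks}.

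For $(2)\Rightarrow(3)$, let $\mathcal S$ be the set of torsionfree, almost torsion objects. By Proposition~\ref{prop:maximal-f-simple}, $\mathcal S$ is a semibrick, so $\W'=\filt(\mathcal S)$ is wide. Since $\mathcal S\subseteq\mathbf f$, we get $\F(\W')\subseteq\mathbf f$. Conversely, Theorem~\ref{thm:f-simples-filter} says every object of $\mathbf f$ is filtered by $\mathbf f$-simples. By (2), each $\mathbf f$-simple lies in $\cogen(\mathcal S)$, so $\mathbf f\subseteq\filt\cogen(\mathcal S)\subseteq\F(\W')$.

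The main work is $(1)\Rightarrow(2)$. Assume $\mathbf f=\F(\W)$ and let $\mathcal S$ be the simple objects of $\W$. I would proceed in two steps.

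\emph{Step A: every $S\in\mathcal S$ is torsionfree, almost torsion.} Let $S\to F$ be a nonzero map with $F\in\mathbf f$. I claim the map is a monomorphism with cokernel in $\mathbf f$. By Lemma~\ref{lem:relative-simple-brick}(3), this gives both $\mathbf f$-simplicity and condition (2). I would induct on the length of a filtration of $F$ with factors in $\cogen(\W)$.
\begin{itemize}
\item Base case $F\subseteq W^n$ with $W\in\W$. The composite $S\to W^n$ has kernel and image in $\W$, because $\W$ is wide. Since $S$ is simple in $\W$, the map is mono. Then $F/S\subseteq W^n/S\in\W$, so $F/S\in\cogen(\W)\subseteq\mathbf f$.
\item Inductive step. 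Write $0\to F'\to F\to F''\to 0$ with $F''\in\cogen(\W)$ and $F'$ of shorter filtration. If $S\to F\to F''$ is nonzero, it is mono by the base case, so $S\cap F'=0$, and $F/S$ is an extension of $F''/S\in\mathbf f$ by $F'$. If the composite is zero, then $S\to F'$ is mono with cokernel in $\mathbf f$ by induction, and $F/S$ is an extension of $F''$ by $F'/S$.
\end{itemize}
In either case $F/S\in\mathbf f$, since $\mathbf f$ is closed under extensions.

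\emph{Step B: every $\mathbf f$-simple $B$ embeds into some $S\in\mathcal S$.} Take the top factor of a $\cogen(\W)$-filtration of $B$. This gives a nonzero map $B\to W^n$, hence a nonzero map $B\to W$ for some $W\in\W$, and it is mono by Lemma~\ref{lem:relative-simple-brick}(3). Choose such a $W$ of minimal length and a $\W$-simple quotient $W\to S$. If the composite $B\to S$ were zero, $B$ would embed into the kernel of $W\to S$. That kernel lies in $\W$ and is shorter, contradicting minimality. Hence $B\to S$ is nonzero, so it is mono, again by Lemma~\ref{lem:relative-simple-brick}(3), and Step A completes (2).

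I expect the main obstacle to be Step A, in particular keeping track of the possible intersection with $F'$ in the inductive step; the case split above is designed to handle it.
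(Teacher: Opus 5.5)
Your proof is correct and complete. The paper gives no proof of this proposition; it simply imports it from Enomoto's monobrick paper, so there is no internal argument to compare with. Your cycle \((1)\Rightarrow(2)\Rightarrow(3)\Rightarrow(1)\) is a self-contained proof that uses only results the paper recalls beforehand: the filtration by \(\mathbf f\)-simples, Proposition~\ref{prop:maximal-f-simple}, and Ringel's bijection. The delicate points check out. In Step A, your case split does give the extensions \(0\to F'\to F/S\to F''/S\to 0\) and \(0\to F'/S\to F/S\to F''\to 0\). In Step B, minimality of \(\elll(W)\) does force \(B\to S\) to be non-zero.

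If you want a leaner version, two shortcuts are available:
\begin{itemize}
\item \emph{Step B.} Since \(\W=\filt(\mathcal S)\subseteq\F(\mathcal S)\), one has \(\mathbf f=\F(\W)=\F(\mathcal S)\). So the top factor of a \(\cogen(\mathcal S)\)-filtration of \(B\) already gives a non-zero map \(B\to S_1\oplus\cdots\oplus S_n\). A non-zero component \(B\to S_i\) is then the required monomorphism, with no minimal-length argument needed.
\item \emph{Step A.} You only need the monomorphism half of your claim, and its induction needs no cokernel bookkeeping; this shows each \(S\in\mathcal S\) is \(\mathbf f\)-simple. Step B and the semibrick property then show that any non-zero \(S\to B'\) with \(B'\) an \(\mathbf f\)-simple is a split monomorphism into a brick, hence an isomorphism. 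So \(S\) is torsionfree, almost torsion by Proposition~\ref{prop:maximal-f-simple}.
\end{itemize}
Your direct verification of the cokernel condition is equally valid, just more hands-on. The dual statement does follow verbatim in \(\A^{\mathrm{op}}\), which is again an abelian length category.
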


\begin{proposition}[{\cite[Corollary 5.6]{RingelBrickChain}}]\label{prop:finitely-cogenerated}
Let \((\mathbf t,\mathbf f)\) be a torsion pair. Then \(\mathbf f\) is finitely cogenerated, that is \(\mathbf f=\F(M)\) for some object \(M\), if and only if \((\mathbf t,\mathbf f)\) is widely cogenerated and has only finitely many torsionfree, almost torsion objects.

Dually, \(\mathbf t\) is finitely generated if and only if \((\mathbf t,\mathbf f)\) is widely generated and has only finitely many torsion, almost torsionfree objects.
\end{proposition}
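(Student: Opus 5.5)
The plan is to treat the two directions separately. Most of the work is in showing that finite cogeneration forces wide cogeneration, which I will check through criterion (2) of Proposition~\ref{prop:wide-cogenerated-criterion}. I only write out the torsionfree statement; the torsion statement follows by the dual argument.

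\emph{($\Leftarrow$).} Suppose $(\mathbf t,\mathbf f)$ is widely cogenerated and its torsionfree, almost torsion objects $B_1,\dots,B_k$ are finitely many. By Proposition~\ref{prop:wide-cogenerated-criterion}(3), $\mathbf f=\F(\filt\{B_1,\dots,B_k\})$. Each $B_i$ lies in $\F(M)$ for $M=\bigoplus_i B_i$. Since $\F(M)$ is extension closed, it contains $\filt\{B_1,\dots,B_k\}$. Hence $\mathbf f\subseteq\F(M)\subseteq\mathbf f$, and so $\mathbf f=\F(M)$.

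\emph{($\Rightarrow$), step 1: every $\mathbf f$-simple embeds in $M$.} Let $\mathbf f=\F(M)=\filt\cogen(M)$ and let $B$ be $\mathbf f$-simple.
\begin{itemize}
\item Take a finite filtration of $B$ with factors in $\cogen(M)$. Its top factor gives a nonzero epimorphism $B\to C$ with $C\in\cogen(M)\subseteq\mathbf f$.
\item By Lemma~\ref{lem:relative-simple-brick}(3) this epimorphism is a monomorphism, so $B\cong C\hookrightarrow M^n$.
\item Some component $B\to M$ of this embedding is nonzero. By Lemma~\ref{lem:relative-simple-brick}(3) again it is a monomorphism, so $B\hookrightarrow M$.
\end{itemize}
In particular every $\mathbf f$-simple has length at most $\ell(M)$.

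\emph{($\Rightarrow$), step 2: wide cogeneration.} Fix an $\mathbf f$-simple $B$. Among all $\mathbf f$-simples $B'$ with $\Hom(B,B')\neq0$, choose one of maximal length; this is possible because all lengths are bounded by $\ell(M)$. Any such nonzero map $B\to B'$ is a monomorphism. I then check the criterion of Proposition~\ref{prop:maximal-f-simple} for $B'$.
\begin{itemize}
\item Suppose $\Hom(B',B'')\neq0$ for an $\mathbf f$-simple $B''$. Then $B'\hookrightarrow B''$.
\item The composite $B\hookrightarrow B'\hookrightarrow B''$ is nonzero, so by maximality $\ell(B'')\le\ell(B')$. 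Hence the monomorphism $B'\hookrightarrow B''$ is an isomorphism.
\end{itemize}
So $B'$ is torsionfree, almost torsion and contains $B$. By Proposition~\ref{prop:wide-cogenerated-criterion}(2) the pair is widely cogenerated.

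\emph{($\Rightarrow$), step 3: finiteness.} This is the main obstacle. Let $B_1,\dots,B_k$ be pairwise non-isomorphic torsionfree, almost torsion objects; by Proposition~\ref{prop:maximal-f-simple} they form a semibrick. By step 1 each embeds in $M$. The aim is $k\le\ell(M)$, by building a chain of subobjects $B_1\subseteq B_1+B_2\subseteq\cdots$ of $M$ with strictly increasing lengths. By induction I will show that $M_j:=M/(B_1+\dots+B_j)$ lies in $\mathbf f$ and that $B_1+\dots+B_j$ is filtered by $B_1,\dots,B_j$.
\begin{itemize}
\item Consider the composite $B_{j+1}\hookrightarrow M\to M_j$.
\item If it were zero, then $B_{j+1}\subseteq B_1+\dots+B_j$. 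Taking the first filtration factor onto which $B_{j+1}$ maps nonzero gives a nonzero map $B_{j+1}\to B_i$ with $i\le j$, contradicting Hom-orthogonality.
\item So the composite is nonzero, hence a monomorphism since $M_j\in\mathbf f$.
\item Its cokernel $M_{j+1}$ lies in $\mathbf f$ by condition (2) in the definition of torsionfree, almost torsion objects. The filtration also extends, with new factor $B_{j+1}$.
\end{itemize}
Thus the length of $B_1+\dots+B_j$ strictly increases with $j$, which gives $k\le\ell(M)$.
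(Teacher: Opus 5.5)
Your proof is correct. The paper does not prove this statement; it imports it from Ringel's survey, so it has no internal argument for you to match. What you give is a self-contained derivation from the results of Enomoto recalled in Section 2: Lemma \ref{lem:relative-simple-brick}(3), Proposition \ref{prop:maximal-f-simple} and Proposition \ref{prop:wide-cogenerated-criterion}.

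\textbf{Comparison with the paper's route.} Your route has two advantages over the bare citation. First, it isolates the mechanism behind the forward direction. Step 1 turns finite cogeneration into a uniform length bound on $\mathbf f$-simples (each embeds in $M$). Step 2 is essentially the argument the paper itself uses later in the proof of Theorem \ref{thm:bounded-length}: a length bound forces every $\mathbf f$-simple into a torsionfree, almost torsion object. The paper iterates strict monomorphisms; you pick a target of maximal length directly. So your proof shows that finite cogeneration and bounded brick length reach the criterion of Proposition \ref{prop:wide-cogenerated-criterion} in the same way. Second, Step 3 proves more than finiteness. With $N_j=B_1+\dots+B_j\subseteq M$ filtered by $B_1,\dots,B_j$, you get $\sum_i \ell(B_i)\le \ell(M)$. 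The argument works:
\begin{itemize}
\item the composite $B_{j+1}\to M\to M/N_j$ is nonzero by Hom-orthogonality against the filtration;
\item it is therefore a monomorphism because $M/N_j\in\mathbf f$;
\item condition (2) in the definition of torsionfree, almost torsion objects keeps $M/N_{j+1}$ in $\mathbf f$.
\end{itemize}
The citation is shorter, but leaves both the mechanism and the bound hidden.

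\textbf{Two wording fixes.} In Step 1, discard zero factors of the filtration so that the top factor is nonzero. In Step 3, replace ``the first filtration factor onto which $B_{j+1}$ maps nonzero'' by the smallest $i$ with $B_{j+1}\subseteq N_i$; then $B_{j+1}\to N_i/N_{i-1}\cong B_i$ is nonzero.

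\textbf{Dual statement.} The torsion version follows by passing to $\A^{\mathrm{op}}$. There $\T(M)$, $\mathbf t$-simples and torsion, almost torsionfree objects become $\F(M)$, $\mathbf f$-simples and torsionfree, almost torsion objects, and every result you use has its dual stated in the paper.
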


\begin{definition}
We say that \(\A\) is widely determined, or WD, if every torsion class in \(\A\) is widely generated. We say that \(\A\) is widely co-determined, or WCD, if every torsionfree class in \(\A\) is widely cogenerated.
\end{definition}

\begin{corollary}\label{cor:basic-wide-consequences}
Assume \(\A\) is WD and WCD.
\begin{enumerate}[label=\textup{(\arabic*)},leftmargin=2.5em]
\item Every non-zero torsion class has a torsion, almost torsionfree object.
\item If a torsion class is not compact, then it has infinitely non-isomorphic torsion, almost torsionfree objects.
\item If a torsion class is not co-compact, then it has infinitely many non-isomorphic torsionfree, almost torsion objects.
\end{enumerate}
\end{corollary}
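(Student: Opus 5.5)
Each item is a direct consequence of the criteria for wide generation and finite generation stated above, combined with Theorem~\ref{thm:f-simples-filter} and its dual. Throughout, let \(\mathbf t\) be a torsion class with torsion pair \((\mathbf t,\mathbf f)\).

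For (1), assume \(\mathbf t\neq 0\). I first need a \(\mathbf t\)-simple object. By the dual of Theorem~\ref{thm:f-simples-filter}, every object of \(\mathbf t\) has a filtration by \(\mathbf t\)-simples. So any non-zero \(T\in\mathbf t\) yields a non-zero \(\mathbf t\)-simple factor \(B\). One can also see this directly: a non-zero subobject of \(T\) lying in \(\mathbf t\) and of minimal length is \(\mathbf t\)-simple. Indeed, each proper subobject \(U\) of such a minimal subobject has \(\mathbf t\)-part \(tU=0\) by minimality, so \(U\in\mathbf f\). Since \(\A\) is WD, \((\mathbf t,\mathbf f)\) is widely generated. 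The dual criterion in Proposition~\ref{prop:wide-cogenerated-criterion} then says that \(B\) is a quotient of some torsion, almost torsionfree object. In particular such an object exists.

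For (2), I read ``compact'' as ``finitely generated'', i.e. \(\mathbf t=\T(M)\) for a single object \(M\). This is the identification of compact elements of \(\tors(\A)\) from the lattice-theoretic references, and if needed I would recall that equivalence explicitly. Suppose \(\mathbf t\) is not compact. Since \(\A\) is WD, \((\mathbf t,\mathbf f)\) is widely generated. If it had only finitely many torsion, almost torsionfree objects up to isomorphism, the dual part of Proposition~\ref{prop:finitely-cogenerated} would make \(\mathbf t\) finitely generated, which is a contradiction. Hence there are infinitely many of them.

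For (3), I argue dually, reading ``co-compact'' for \(\mathbf t\) as compactness of \(\mathbf f\) in the lattice of torsionfree classes, i.e. \(\mathbf f=\F(M)\). WCD makes \((\mathbf t,\mathbf f)\) widely cogenerated. Proposition~\ref{prop:finitely-cogenerated} then forces infinitely many torsionfree, almost torsion objects whenever \(\mathbf f\) is not finitely cogenerated.

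The only real subtlety is terminological: matching ``compact/co-compact'' with ``finitely generated/cogenerated''. This is standard, since the compact elements of \(\tors(\A)\) are exactly the classes \(\T(M)\), which are finite joins of \(\T(B)\) for bricks \(B\). For (1), the remaining step is the existence of a \(\mathbf t\)-simple object, which follows from the minimal-length argument given there.
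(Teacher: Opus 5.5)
Your proof is correct and is essentially the paper's own: (2) and (3) are the same contrapositive use of Proposition~\ref{prop:finitely-cogenerated}, with compactness identified through Proposition~\ref{prop:compact-finitely-generated} and the torsion/torsionfree duality remark, and (1) likewise rests on the dual of Proposition~\ref{prop:wide-cogenerated-criterion}. The only minor variation is in (1): you obtain a \(\mathbf t\)-simple object by a minimal-length argument, whereas the paper takes a simple object of the wide subcategory \(\W\) with \(\mathbf t=\T(\W)\). Your version is, if anything, slightly more explicit, since the paper leaves implicit why a simple object of \(\W\) yields a label.
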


\begin{proof}
For (1), let \(\mathbf t\neq 0\) and write \(\mathbf t=\T(\W)\) with \(\W\) wide. The wide subcategory \(\W\) is non-zero, hence has a simple object. By Theorem \ref{thm:ringel-semibricks} and the dual of Proposition \ref{prop:wide-cogenerated-criterion}, this gives a torsion, almost torsionfree object.

For (2), if there were only finitely many torsion, almost torsionfree objects, then Proposition \ref{prop:finitely-cogenerated} would imply that \(\mathbf t\) is finitely generated, hence compact. This is a contradiction. The statement (3) is obtained by duality.
\end{proof}

\subsection{The lattices of torsion and torsionfree classes}

Let \(\torf(\A)\) be the set of torsionfree classes ordered by inclusion and let \(\tors(\A)\) be the set of torsion classes ordered by inclusion.

\begin{theorem}[{\cite[Theorem 3.1]{DIRRT}}]\label{thm:torsionfree-lattice}
The poset \((\torf(\A),\subseteq)\) is a complete lattice. Its meet is given by intersections and its join is given by taking the smallest torsionfree class containing the union. Moreover, this lattice is bi-algebraic and completely semidistributive. Dually, \(\tors(\A)\) is a complete lattice.
\end{theorem}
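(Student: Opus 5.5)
We would prove the statement for \(\torf(\A)\) and deduce the statement for \(\tors(\A)\) from the order-reversing bijection \(\mathbf f\mapsto{}^{\perp_0}\mathbf f\), \(\mathbf t\mapsto\mathbf t^{\perp_0}\) between \(\torf(\A)\) and \(\tors(\A)\). This anti-isomorphism also explains why one proves that \emph{both} \(\torf(\A)\) and its dual are algebraic and semidistributive. The proof has three steps: completeness, algebraicity, and complete semidistributivity.

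\emph{Completeness.} An arbitrary intersection of subcategories closed under subobjects and extensions is again closed under both. Hence the meet is the intersection, \(\A\) itself is the top element, and \(0\) is the bottom. The join of a family \(\{\mathbf f_i\}\) is then forced to be the intersection of all torsionfree classes containing \(\bigcup_i\mathbf f_i\), that is \(\F(\bigcup_i\mathbf f_i)=\filt\cogen(\bigcup_i\mathbf f_i)\).

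\emph{Bi-algebraicity.} The first observation is that a directed union of torsionfree classes is already closed under subobjects and extensions. Indeed, an extension involves only two objects, and these lie in a common member of the directed family. So directed joins are unions. Next we show \(\F(M)\) is compact. Suppose \(M\in\F(\bigcup_i\mathbf f_i)\). Then \(M\) has a finite filtration whose factors are subobjects of finite direct sums of objects from the \(\mathbf f_i\). Only finitely many indices occur, so \(M\), and hence \(\F(M)\), lies in a finite subjoin. Finally, every \(\mathbf f\) is the directed union \(\bigcup_{M\in\mathbf f}\F(M)\), because \(\F(M)\cup\F(M')\subseteq\F(M\oplus M')\). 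So \(\torf(\A)\) is algebraic. Running the same argument with \(\T(M)\) shows \(\tors(\A)\) is algebraic, and via the anti-isomorphism this is dual-algebraicity of \(\torf(\A)\).

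\emph{Complete semidistributivity.} This is the main obstacle. By duality it suffices to prove complete meet-semidistributivity for \(\torf(\A)\) and for \(\tors(\A)\):
\[
\mathbf f\cap\mathbf g_i=\mathbf h\ \ \text{for all } i\quad\Longrightarrow\quad \mathbf f\cap\textstyle\bigvee_i\mathbf g_i=\mathbf h .
\]
The inclusion \(\supseteq\) is clear. For \(\subseteq\), suppose \(\mathbf f\cap\bigvee_i\mathbf g_i\not\subseteq\mathbf h\). By Theorem \ref{thm:f-simples-filter}, applied to the torsionfree class \(\mathbf f\cap\bigvee_i\mathbf g_i\), we may choose a relative simple \(B\), which is a brick by Lemma \ref{lem:relative-simple-brick}, with \(B\notin\mathbf h\). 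We choose \(B\) of minimal length among such relative simples.

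The key point is to extract from \(B\) a nonzero piece lying in a single \(\mathbf g_i\). Since \(B\in\filt\cogen(\bigcup\mathbf g_i)\), the first term of a filtration of \(B\) gives a nonzero subobject \(B_1\subseteq B\) embedding in a finite sum \(\bigoplus G_j\) with \(G_j\in\mathbf g_{i_j}\). The subtlety is that \(B_1\) embeds into the sum, not into one summand, so we do not immediately land in one \(\mathbf g_i\). I would try to handle this as follows.
\begin{itemize}[leftmargin=2.5em]
\item Compose with a projection to get a nonzero map from \(B_1\) into a single \(G_j\), and take its image.
\item Use the relative-simplicity criterion in Lemma \ref{lem:relative-simple-brick}(3): nonzero maps from a relative simple into the torsionfree class are monomorphisms. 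This should force \(B\) itself, or a subobject of smaller length, into \(\mathbf f\cap\mathbf g_{i_j}=\mathbf h\).
\item The minimality of \(\ell(B)\), together with closure of \(\mathbf h\) under extensions, should then yield \(B\in\mathbf h\), a contradiction.
\end{itemize}

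If the projection argument does not close directly, the fallback is to work instead with the torsion part of \(B\) with respect to \({}^{\perp_0}\mathbf h\). One would then argue by induction on length that this torsion part vanishes, using that each \(\mathbf g_i\) meets \(\mathbf f\) only inside \(\mathbf h\). The same argument, with arrows reversed, handles \(\tors(\A)\) and completes the proof.
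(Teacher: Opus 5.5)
The paper gives no proof of this statement; it only cites \cite[Theorem 3.1]{DIRRT}, so your proposal has to stand on its own. Your treatment of completeness and bi-algebraicity is fine. So is the reduction of complete semidistributivity to complete meet-semidistributivity of $\torf(\A)$ and of $\tors(\A)$. The gap is in the one non-formal step.

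Working with the \emph{first} term $B_1\subseteq B$ of a filtration cannot close the argument. Write $\mathbf k=\mathbf f\cap\bigvee_i\mathbf g_i$. Since $B_1$ is a subobject of $B\in\mathbf f$, the conclusion $B_1\in\mathbf f\cap\mathbf g_i=\mathbf h$ is automatic and carries no information. Meanwhile $B/B_1$, if nonzero, is a proper quotient of a $\mathbf k$-simple. So it lies in ${}^{\perp_0}\mathbf k$, and hence never in $\mathbf h\subseteq\mathbf k$. Extension-closure of $\mathbf h$ can therefore only conclude when $B_1=B$. Lemma~\ref{lem:relative-simple-brick}(3) does not rescue this. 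It controls maps \emph{out of} $B$ into $\mathbf k$, whereas you only have maps into $B$, and maps out of $B_1$, which is not relative simple. The fallback via the torsion part is too vague to count as an argument.

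The missing idea is to use the \emph{other end} of the filtration. First, your ``subtlety'' disappears: a subobject of $G\oplus G'$ is an extension of a subobject of $G'$ by a subobject of $G$, so $\bigvee_i\mathbf g_i=\filt(\bigcup_i\mathbf g_i)$. Next, you do not need relative simples at all. Take any $X\in\mathbf k\setminus\mathbf h$ of minimal length. Since $\mathbf k$ is closed under subobjects, every proper subobject of $X$ lies in $\mathbf h$. Your $B$ also has this property, because its proper subobjects are filtered by shorter $\mathbf k$-simples.

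Now the top step of a filtration gives $0\to X'\to X\to G\to 0$ with $0\neq G\in\mathbf g_i$. Either $X'=0$ or $X'$ is a proper subobject, so $X'\in\mathbf h\subseteq\mathbf g_i$. Hence $X\in\mathbf g_i$ by extension-closure, so $X\in\mathbf f\cap\mathbf g_i=\mathbf h$, a contradiction.

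For $\tors(\A)$ the mirror argument goes as follows. Suppose $\mathbf t\cap\mathbf u_i=\mathbf s$ for all $i$, and take $X\in(\mathbf t\cap\bigvee_i\mathbf u_i)\setminus\mathbf s$ of minimal length. Every proper quotient of $X$ then lies in $\mathbf s$. The \emph{bottom} step $0\to U\to X\to X''\to 0$, with $0\neq U\in\mathbf u_i$, gives $X''\in\mathbf s\subseteq\mathbf u_i$ (or $X''=0$). Hence $X\in\mathbf t\cap\mathbf u_i=\mathbf s$, a contradiction. So your ``first term'' instinct is the right one for torsion classes, but the wrong one for torsionfree classes.
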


The following elementary observation allows an immediate dualization of all the lattice theoretical results about torsionfree classes.

\begin{lemma}
The assignment:
\[
	\torf(\A) \to \tors(\A), \quad \mathbf{f} \mapsto {}^{\perp_0}\mathbf{f}
\]
is a duality (i.e. an anti-isomorphism) between the lattice of torsionfree classes and the lattice of torsion classes. 
Moreover the assignment:
\[
	\torf(\A) \to \tors(\A^{\mathrm{op}}), \quad \mathbf{f} \mapsto \mathbf{f}
\]
is a lattice isomorphism.
\end{lemma}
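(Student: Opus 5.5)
The statement splits into two claims, and I would verify them separately.

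\textbf{First claim: the map \(\mathbf f\mapsto {}^{\perp_0}\mathbf f\) is an anti-isomorphism.}

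Well-definedness is the classical fact recalled after the definition of torsion pair. For a torsionfree class \(\mathbf f\), set \(\mathbf t={}^{\perp_0}\mathbf f\). Then \((\mathbf t,\mathbf f)\) is a torsion pair, with \(\mathbf t\) a torsion class and \(\mathbf f=\mathbf t^{\perp_0}\). To justify this, for \(M\in\A\) take the largest subobject \(T\) of \(M\) lying in \({}^{\perp_0}\mathbf f\); it exists because \(M\) has finite length and \({}^{\perp_0}\mathbf f\) is closed under quotients, extensions and finite sums. One then checks that \(M/T\in\mathbf f\).

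The inverse map is \(\mathbf t\mapsto\mathbf t^{\perp_0}\). The identities \(\mathbf t={}^{\perp_0}\mathbf f\) and \(\mathbf f=\mathbf t^{\perp_0}\) say exactly that the two maps are mutually inverse bijections.

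Both maps reverse inclusions, directly from the definition of a Hom-orthogonal: if \(\mathbf f\subseteq\mathbf f'\), then \({}^{\perp_0}\mathbf f'\subseteq{}^{\perp_0}\mathbf f\). An order-reversing bijection whose inverse is also order-reversing is a lattice anti-isomorphism. It therefore exchanges meets and joins, including arbitrary ones in the complete lattices of Theorem \ref{thm:torsionfree-lattice}.

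\textbf{Second claim: \(\torf(\A)\to\tors(\A^{\mathrm{op}})\), \(\mathbf f\mapsto\mathbf f\), is a lattice isomorphism.}

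I would use that \(\A^{\mathrm{op}}\) is again an abelian length category, and that subobjects in \(\A\) are exactly quotients in \(\A^{\mathrm{op}}\). A short exact sequence \(0\to X\to Y\to Z\to 0\) in \(\A\) corresponds to \(0\to Z\to Y\to X\to 0\) in \(\A^{\mathrm{op}}\). So closure under extensions is preserved, and closure under subobjects becomes closure under quotients.

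By the characterisations recalled after the definition of torsion pair, a subcategory is a torsionfree class of \(\A\) if and only if it is a torsion class of \(\A^{\mathrm{op}}\). Hence the identity on subcategories gives a bijection \(\torf(\A)\to\tors(\A^{\mathrm{op}})\). Both posets are ordered by inclusion, so this bijection is an order isomorphism, hence a lattice isomorphism.

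\textbf{Where the work lies.} The only non-formal point is existence of the torsion part in the first claim, but this is the standard fact already quoted in the paper; everything else is bookkeeping. Composing the two maps shows that every lattice-theoretic statement about \(\torf\) transfers to \(\tors\) with meets and joins swapped.
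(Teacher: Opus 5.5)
Your proposal is correct and is essentially the argument the paper has in mind. The paper states this lemma without proof, as an immediate consequence of the recalled facts (the closure-property characterisations, together with \(\mathbf t={}^{\perp_0}\mathbf f\) and \(\mathbf f=\mathbf t^{\perp_0}\)); for \(\A^{\mathrm{op}}\) you can also read off directly from the definition that \((\mathbf t,\mathbf f)\) is a torsion pair in \(\A\) if and only if \((\mathbf f,\mathbf t)\) is one in \(\A^{\mathrm{op}}\).

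One minor remark: your construction of the torsion part is unnecessary, since by the paper's definition a torsionfree class is already the second component of a torsion pair. If you did want it from closure properties alone, maximality of \(T\) only gives \(M/T\in({}^{\perp_0}\mathbf f)^{\perp_0}\). The step \(M/T\in\mathbf f\) then needs the dual argument with the smallest subobject \(K\subseteq M\) such that \(M/K\in\mathbf f\).
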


\begin{definition}
An element \( x \) of a complete lattice is said to be \emph{compact} if whenever \( x \le \bigvee_I y_i \) for some collection \( \{ y_i \}_{i \in I} \) of elements of \( L \) there exists a finite subset \( I_0 \subset I \) such that \( x \le \bigvee_{I_0} y_i \). 

Dually, an element \( x \) is \emph{cocompact} if whenever \( x \ge \bigwedge_I y_i \)  for some collection \( \{ y_i \}_{i \in I} \) of elements of \( L \) there exists a finite subset \( I_0 \subset I \) such that \( x \ge \bigwedge_{I_0} y_i \). 
\end{definition}

\begin{remark}
Notice that in a bialgebraic lattice the inequalities in the definition of compact and cocompact elements could be replaced by equalities.

Moreover, by the duality between torsion and torsionfree classes, a torsion class is cocompact if and only if the corresponding torsionfree class is compact. 
\end{remark}

\begin{proposition}[{\cite[Proposition 3.2]{DIRRT}}]\label{prop:compact-finitely-generated}
A torsionfree class is compact in \(\torf(\A)\) if and only if it is finitely cogenerated. Dually, a torsion class is compact in \(\tors(\A)\) if and only if it is finitely generated.
\end{proposition}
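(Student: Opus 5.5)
The plan is to reduce compactness to a statement about directed joins, using the fact that directed joins in \(\torf(\A)\) are computed as unions.

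First I will show that if \(\{\mathbf f_i\}_{i\in I}\) is a directed family of torsionfree classes, meaning any two are contained in a third, then \(\bigcup_I \mathbf f_i\) is already a torsionfree class. By Theorem \ref{thm:torsionfree-lattice} it is then the join \(\bigvee_I \mathbf f_i\). Closure under subobjects is clear, since each \(\mathbf f_i\) is closed under subobjects. For closure under extensions, take \(0\to X\to Y\to Z\to 0\) with \(X\in\mathbf f_i\) and \(Z\in\mathbf f_j\). By directedness both lie in some \(\mathbf f_k\), so \(Y\in\mathbf f_k\).

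Second, I note that an arbitrary join \(\bigvee_I \mathbf f_i\) is the directed join of the finite joins \(\bigvee_{I_0}\mathbf f_i\) over finite subsets \(I_0\subseteq I\). By the first step, it is therefore the union of these finite joins.

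Now suppose \(\mathbf f=\F(M)\) and \(\mathbf f\subseteq\bigvee_I\mathbf f_i\). Then \(M\) lies in the union of the finite joins, so \(M\in\bigvee_{I_0}\mathbf f_i\) for some finite \(I_0\). Since \(\F(M)\) is the smallest torsionfree class containing \(M\), we get \(\mathbf f\subseteq\bigvee_{I_0}\mathbf f_i\). Hence \(\mathbf f\) is compact.

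Conversely, suppose \(\mathbf f\) is compact. I write
\[
\mathbf f=\bigvee_{M\in\mathbf f}\F(M).
\]
Since \(\A\) is essentially small, this join can be taken over a set of representatives of isomorphism classes. The equality holds because \(\mathbf f\) contains every \(\F(M)\) with \(M\in\mathbf f\), and it equals their union. Compactness gives finitely many \(M_1,\dots,M_n\in\mathbf f\) with \(\mathbf f\subseteq\F(M_1)\vee\dots\vee\F(M_n)\).

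It remains to check that \(\F(M_1)\vee\dots\vee\F(M_n)=\F(M_1\oplus\dots\oplus M_n)\). Each \(M_j\) is a subobject of the direct sum, and the direct sum is an iterated extension of the \(M_j\); this gives both inclusions. Therefore \(\mathbf f=\F(\bigoplus_j M_j)\) is finitely cogenerated, where the reverse inclusion holds because each \(M_j\in\mathbf f\).

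The dual statement for torsion classes follows by applying the argument to \(\A^{\mathrm{op}}\), via the lattice isomorphism \(\torf(\A)\cong\tors(\A^{\mathrm{op}})\). The only step that needs any care is the first one, that directed unions are torsionfree classes; everything else is formal.
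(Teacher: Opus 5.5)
Your proof is correct and complete. The three ingredients all check out:
\begin{itemize}
\item a directed union of torsionfree classes is closed under subobjects and extensions, hence is the join;
\item an arbitrary join is the directed join of its finite subjoins, which include the empty one, $\{0\}$;
\item $\F(M_1)\vee\cdots\vee\F(M_n)=\F(M_1\oplus\cdots\oplus M_n)$.
\end{itemize}
The passage to $\A^{\mathrm{op}}$ for the torsion-class statement is also fine.

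The paper gives no proof of its own here; it simply quotes the result from the literature, and your argument is the standard one. A minor variant of your first half uses the explicit formula $\bigvee_I\mathbf f_i=\filt\bigl(\bigcup_I\mathbf f_i\bigr)$. This holds because the filtration closure of a subobject-closed class is again subobject-closed. With it, a finite filtration of $M$ visibly involves only finitely many $\mathbf f_i$, so you can skip the directed-union lemma; otherwise it is the same idea.
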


\begin{proposition}[{\cite[Theorem 1.0.2, Theorem 2.3.2]{BCZ}}]\label{prop:brick-labels-covers}
Let \((\mathbf t,\mathbf f)\) be a torsion pair.
\begin{enumerate}[label=\textup{(\arabic*)},leftmargin=2.5em]
\item Upper covers of \(\mathbf t\) in \(\tors(\A)\) are in bijection with (isomorphism classes of) torsionfree, almost torsion objects for \((\mathbf t,\mathbf f)\).
\item Lower covers of \(\mathbf t\) in \(\tors(\A)\) are in bijection with (isomorphism classes of) torsion, almost torsionfree objects for \((\mathbf t,\mathbf f)\).
\end{enumerate}
If \(\mathbf t\lessdot \mathbf u\) is a cover relation, the corresponding object is called the brick label of the cover. It is torsionfree, almost torsion for \(\mathbf t\) and torsion, almost torsionfree for \(\mathbf u\).
\end{proposition}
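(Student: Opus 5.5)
The plan is to construct the bijection in (1) explicitly and to deduce (2) from (1) by duality. Lower covers of \(\mathbf t\) in \(\tors(\A)\) correspond to upper covers of \(\mathbf f\) in \(\torf(\A)\). These in turn correspond to upper covers in \(\tors(\A^{\mathrm{op}})\), and under this correspondence torsionfree, almost torsion objects become torsion, almost torsionfree ones. The final sentence of the statement will come out of the same construction.

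\emph{From bricks to covers.} Let \(B\) be torsionfree, almost torsion for \((\mathbf t,\mathbf f)\) and put \(\mathbf u=\T(\mathbf t\cup\{B\})\). The key claim is that every non-zero \(F\in\mathbf u\cap\mathbf f\) lies in \(\filt(B)\).

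To prove the claim, filter \(F\) with factors in \(\mathbf t\cup\gen(B)\). The first non-zero term of the filtration is a subobject of \(F\), hence lies in \(\mathbf f\). It is therefore not in \(\mathbf t\), so it is a quotient of some \(B^n\). This yields a non-zero map \(B\to F\). By Lemma \ref{lem:relative-simple-brick}(3) this map is a monomorphism, and by condition (2) its cokernel lies in \(\mathbf f\). The cokernel also lies in \(\mathbf u\), being a quotient of \(F\), and it is shorter than \(F\), so induction on length gives \(F\in\filt(B)\).

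Now let \(\mathbf v\) be a torsion class with \(\mathbf t\subsetneq\mathbf v\subseteq\mathbf u\), and pick \(X\in\mathbf v\setminus\mathbf t\). The torsionfree part of \(X\) is a non-zero object of \(\mathbf v\cap\mathbf f\subseteq\filt(B)\), so it has \(B\) as a quotient. Since \(\mathbf v\) is closed under quotients, \(B\in\mathbf v\), and hence \(\mathbf v=\mathbf u\). Thus \(\mathbf t\lessdot\mathbf u\). Moreover \(B\) is recovered as the unique simple object of the wide subcategory \(\filt(B)=\mathbf u\cap\mathbf f\) (Theorem \ref{thm:ringel-semibricks}), so the assignment \(B\mapsto\mathbf u\) is injective.

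\emph{From covers to bricks.} Given a cover \(\mathbf t\lessdot\mathbf u\), choose \(B\in\mathbf u\cap\mathbf f\) non-zero of minimal length.

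\(B\) is \(\mathbf f\)-simple. For a proper quotient \(B/K\), the torsionfree part of \(B/K\) lies in \(\mathbf u\cap\mathbf f\) and is shorter than \(B\). By minimality it vanishes, so \(B/K\in\mathbf t\).

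The cover property forces \(\mathbf u=\T(\mathbf t\cup\{B\})\).

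\(B\) is almost torsion. I plan to check (2'). Take a non-split sequence \(0\to B\to M\to T\to 0\) with \(T\in\mathbf t\), and suppose \(M\notin\mathbf t\). Then \(M/tM\) is a non-zero object of \(\mathbf u\cap\mathbf f\). The composite \(B\to M/tM\) is non-zero, since otherwise \(B\) would lie in the torsion object \(tM\), which is impossible for \(B\in\mathbf f\). By \(\mathbf f\)-simplicity this composite is a monomorphism. This is the step I expect to be the main obstacle: I must show that \(B\to M/tM\) is an isomorphism, which makes \(tM\) a complement to \(B\) in \(M\) and contradicts non-splitness. I would first try to run the filtration argument above for \(\mathbf u=\T(\mathbf t\cup\{B\})\) using only \(\mathbf f\)-simplicity and minimality. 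The aim is a chain of monomorphisms \(B\hookrightarrow M/tM\) whose successive cokernels are again in \(\mathbf u\cap\mathbf f\), so that \(M/tM\in\filt(B)\). A length count against \(M\), which has length \(\ell(B)+\ell(T)\), then pins down \(M/tM\simeq B\).

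Once \(B\) is torsionfree, almost torsion, the first part shows that \(\mathbf u\) is the cover attached to \(B\). Hence the two assignments are mutually inverse, and this proves (1).

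For the last sentence of the statement, \(B\in\mathbf u\), and every proper subobject of \(B\) lies in \(\mathbf f\subseteq\mathbf u^{\perp_0}\cup\ldots\). More precisely, a proper subobject of \(B\) lies in \(\mathbf f\cap\mathbf u=\filt(B)\) only if it is zero, so it is \(\mathbf u^{\perp_0}\)-torsionfree. This shows that \(B\) is \(\mathbf u\)-simple. For the almost torsionfree condition, I would run the dual of the argument in the first part inside \(\mathbf u\), using that \(\mathbf u\cap\mathbf f=\filt(B)\) is wide.
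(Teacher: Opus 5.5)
The paper does not prove this statement; it quotes it from \cite{BCZ}. Your argument therefore has to stand on its own, and most of it does. The following steps are all fine:
\begin{itemize}
\item passing from a torsionfree, almost torsion $B$ to the cover $\T(\mathbf t\cup\{B\})$ with $\mathbf u\cap\mathbf f=\filt(B)$;
\item the $\mathbf f$-simplicity of a minimal-length $B\in\mathbf u\cap\mathbf f$;
\item the equality $\mathbf u=\T(\mathbf t\cup\{B\})$;
\item the reduction of (2) to (1).
\end{itemize}

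The genuine gap is the step you flagged, namely that $B$ is almost torsion, and the route you sketch cannot close it. It is circular. The first monomorphism of your chain is $B\hookrightarrow M/tM$, whose cokernel is a quotient of $T$ and so lies in $\mathbf t$. Requiring it to lie in $\mathbf f$ is the same as requiring it to vanish, which is the claim itself. In the first part, torsionfreeness of these cokernels came from condition (2), which is not available here.

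Moreover, the data you allow yourself do not imply almost torsion. These are $\mathbf f$-simplicity, minimality of $\elll(B)$ in $\mathbf u\cap\mathbf f$, and $\mathbf u=\T(\mathbf t\cup\{B\})$. For the quiver $1\to 2$, take $\mathbf t=\mathrm{add}(S_1)$, so $\mathbf f=\mathrm{add}(S_2\oplus P_1)$, where $P_1$ has top $S_1$ and socle $S_2$. Then $B=S_2$ has all three properties with $\mathbf u=\A$. Yet $0\to S_2\to P_1\to S_1\to 0$ is non-split with $P_1\notin\mathbf t$. Consistently, $\mathbf t\subsetneq\mathrm{add}(S_1\oplus P_1)\subsetneq\A$, so this $\mathbf u$ is not a cover.

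Separately, even granting $M/tM\in\filt(B)$, a length count would not give $M/tM\simeq B$. You would need wideness of $\filt(B)$ instead.

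The missing idea is a second use of the cover hypothesis, applied to $M$ rather than to $B$.
\begin{itemize}
\item Since $M\in\mathbf u\setminus\mathbf t$, you have $\mathbf t\subsetneq\T(\mathbf t\cup\{M\})\subseteq\mathbf u$, hence $B\in\T(\mathbf t\cup\{M\})=\filt(\mathbf t\cup\gen(M))$.
\item The first non-zero term of such a filtration of $B$ lies in $\mathbf f$, hence in $\gen(M)$. This gives a non-zero map $h\colon M\to B$.
\item Let $\iota\colon B\to M$ be the inclusion. Since $B$ is a brick, $h\iota\in\operatorname{End}(B)$ is either invertible or zero.
\item If $h\iota$ is invertible, the sequence splits. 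If it is zero, $h$ induces a non-zero map $T\simeq M/B\to B$, contradicting $\Hom(\mathbf t,\mathbf f)=0$.
\end{itemize}
With this, your bijection in (1) is complete.

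For the last sentence of the statement, make explicit that you apply your $\mathbf u$-simplicity argument to the $\mathbf u$-torsion part of a proper subobject $K\subsetneq B$. Knowing only $K\notin\mathbf u$ does not give $K\in\mathbf v$. The almost torsionfree property is then most cleanly obtained by applying the completed part (1) in $\A^{\mathrm{op}}$ to the cover $\mathbf v\lessdot\mathbf f$ of $\torf(\A)$. Its label is the minimal-length object of $\mathbf f\cap\mathbf u=\filt(B)$, which is $B$ itself.
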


We will use the following notation for labels.

\begin{definition}
For a torsion pair \((\mathbf t,\mathbf f)\), let
\[
\Up(\mathbf t)=\{\text{torsionfree, almost torsion objects for }(\mathbf t,\mathbf f)\}/\cong,
\]
and let
\[
\Low(\mathbf t)=\{\text{torsion, almost torsionfree objects for }(\mathbf t,\mathbf f)\}/\cong.
\]
Thus \(\Up(\mathbf t)\) is the set of upper labels of \(\mathbf t\), while \(\Low(\mathbf t)\) is the set of lower labels of \(\mathbf t\).

We will sometimes abuse notation, for instance writing $ R \in \Up(\mathbf{t}) $ for an object \( R \) meaning that \( R \) is torsionfree, almost torsion for \( \tpair{t}{f} \).
\end{definition}

We now recall a fundamental result and notion from the work of Asai which leads to some sort of mutation process in the context of abelian length category:

\begin{definition}
Let \((\mathbf t,\mathbf f)\) and \((\mathbf u,\mathbf v)\) be torsion pairs with \( \mathbf{t} \subsetneq \mathbf{u} \), we say that they form a \emph{wide interval} if \( \mathbf{f} \cap \mathbf{u} \) is a wide subcategory.
\end{definition}

\begin{remark}
Notice that if \( \mathbf{u} \) covers \( \mathbf{t} \), then the corresponding interval is wide: in fact it has the form \( \filt(B) \) for the brick \( B \) labeling the cover ( see for instance \cite[Theorem 3.3]{DIRRT} ).
\end{remark}

We will use the following notation:

\begin{definition}
For two full subcategories \( \mathcal{L} \) and \( \mathcal{R} \) of \( \A \) we fix
\[
\mathcal{L} * \mathcal{R} := \left\{ M \in \A \,|\,  0 \to L \to M \to R \to 0, \text{ with }L \in \mathcal{L}\text{ and }R \in \mathcal{R} \right\}
\]
\end{definition}

\begin{proposition}[{\cite[Theorem 4.2, Proposition 6.3]{AsaiPfeifer}}]\label{prop:cover-intersection}
Let \((\mathbf t,\mathbf f)\) and \((\mathbf u,\mathbf v)\) be torsion pairs forming a wide interval and let \( \mathcal{W} = \mathbf{u} \cap \mathbf{f} \). Then
\[
\mathbf{t} = \mathbf{u} \cap {}^{\perp_0} \mathcal{W},
\qquad
\mathbf f=\mathcal{W}*\mathbf v.
\]
Dually, 
\[
\mathbf {u} = \mathbf{t} * \mathcal{W}, \qquad \mathbf{f} \cap \mathcal{W}^{\perp_0} = \mathbf{v}
\]
Moreover, the simple objects of \( \mathcal{W} \) are upper labels for \( \mathbf{t} \) and lower labels for \( \mathbf{u} \).
\end{proposition}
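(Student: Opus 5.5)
We prove the four identities first, using only the torsion decompositions, and then show that each simple of \(\W\) is a label, using that \(\W\) is wide. Throughout, for \(M\in\A\) we write \(tM\) for its torsion part with respect to \((\mathbf t,\mathbf f)\) and \(uM\) for its torsion part with respect to \((\mathbf u,\mathbf v)\).

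\emph{The identities.} For \(\mathbf u=\mathbf t*\W\):
\begin{itemize}[leftmargin=2.5em]
\item The inclusion \(\supseteq\) holds because \(\mathbf t\subseteq\mathbf u\), \(\W\subseteq\mathbf u\), and \(\mathbf u\) is closed under extensions.
\item Conversely, let \(M\in\mathbf u\) and consider \(0\to tM\to M\to M/tM\to 0\). Then \(M/tM\in\mathbf f\), and \(M/tM\in\mathbf u\) as a quotient of \(M\). Hence \(M/tM\in\W\) and \(M\in\mathbf t*\W\).
\end{itemize}
Dually, for \(\mathbf f=\W*\mathbf v\), let \(N\in\mathbf f\) and take \(0\to uN\to N\to N/uN\to 0\). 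Here \(uN\in\mathbf u\), and \(uN\in\mathbf f\) as a subobject of \(N\), so \(uN\in\W\). The reverse inclusion again uses extension closure.

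For \(\mathbf t=\mathbf u\cap{}^{\perp_0}\W\), the inclusion \(\subseteq\) follows from \(\mathbf t\subseteq\mathbf u\) and \(\Hom(\mathbf t,\mathbf f)=0\). Conversely, if \(M\in\mathbf u\) and \(\Hom(M,\W)=0\), then the epimorphism \(M\to M/tM\in\W\) is zero. So \(M/tM=0\) and \(M\in\mathbf t\). The identity \(\mathbf v=\mathbf f\cap\W^{\perp_0}\) is proved dually, using \(uN\to N\) with \(uN\in\W\).

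\emph{Simples of \(\W\) are upper labels of \(\mathbf t\).} Let \(S\) be simple in \(\W\). We use one observation repeatedly: for any morphism \(g\colon S\to W'\) with \(W'\in\W\), the kernel of \(g\) lies in \(\W\) because \(\W\) is wide. Being a subobject of \(S\) in \(\W\), it is \(0\) or \(S\), so \(g\) is zero or a monomorphism.

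First, \(S\in\mathbf f\). We check that \(S\) is \(\mathbf f\)-simple.
\begin{itemize}[leftmargin=2.5em]
\item Let \(S\twoheadrightarrow Q\) be a proper quotient. The composite \(S\to Q\to Q/tQ\) is an epimorphism onto \(Q/tQ\in\mathbf u\cap\mathbf f=\W\).
\item It cannot be a monomorphism, since \(S\to Q\) has non-zero kernel. So it is zero.
\item Since it is also an epimorphism, \(Q/tQ=0\), i.e. \(Q\in\mathbf t\).
\end{itemize}

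Next we verify condition \textup{(2')}. Let \(0\to S\to M\to T\to 0\) be non-split with \(T\in\mathbf t\). Then \(M\in\mathbf u\) by extension closure, so \(M/tM\in\W\). Apply the observation to the composite \(S\to M\to M/tM\).
\begin{itemize}[leftmargin=2.5em]
\item If the composite is zero, then \(M/tM\) is a quotient of \(T\). So it lies in \(\mathbf t\cap\mathbf f=0\), and \(M\in\mathbf t\), as required.
\item If it is a monomorphism, its cokernel \(M/(S+tM)\) lies in \(\W\subseteq\mathbf f\), because \(\W\) is closed under cokernels. It is also a quotient of \(T\), hence lies in \(\mathbf t\), so it is \(0\). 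Then \(S\to M/tM\) is an isomorphism, and \(M\to M/tM\cong S\) is a retraction of \(S\to M\). This contradicts non-splitness.
\end{itemize}

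\emph{Simples of \(\W\) are lower labels of \(\mathbf u\).} This is the dual argument. Now \(S\in\mathbf u\), we use the maps \(uN\to N\) with \(uN\in\W\), and we run the dual of the case analysis above for non-split sequences \(0\to F\to M\to S\to 0\) with \(F\in\mathbf v\).

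The only delicate step is the case analysis in \textup{(2')}, where wideness is genuinely needed for two things: kernel closure, to get the zero-or-mono dichotomy, and cokernel closure, to kill \(M/(S+tM)\). Everything else is formal manipulation of torsion decompositions.
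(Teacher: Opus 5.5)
Your proof is correct. The four identities, the zero-or-mono dichotomy coming from kernel-closure of $\W$, and both cases of your verification of (2') all check out. The lower-label half is a genuine formal dual: it is your argument run in $\A^{\mathrm{op}}$ for the interval $\mathbf v\subsetneq\mathbf f$, whose heart $\mathbf f\cap\mathbf u$ is again $\W$. One small omission: for $\W*\mathbf v\subseteq\mathbf f$ and for $\mathbf v\subseteq\mathbf f\cap\W^{\perp_0}$ you also need $\mathbf v\subseteq\mathbf f$. This follows from $\mathbf t\subseteq\mathbf u$ by passing to right perpendiculars.

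The paper gives no proof of this proposition and simply quotes it from Asai--Pfeifer. There it belongs to the theory of wide intervals built around the lattice isomorphism $\tors(\W)\cong[\mathbf t,\mathbf u]$, given by $\mathcal X\mapsto\mathbf t*\mathcal X$ with inverse $\mathcal Y\mapsto\mathcal Y\cap\mathbf f$. That isomorphism is compatible with the cover/brick-label dictionary of Proposition~\ref{prop:brick-labels-covers}. The atoms $\filt(S)$ of $\tors(\W)$ yield covers $\mathbf t\lessdot\mathbf t*\filt(S)$ labelled by $S$, and the coatoms of $\tors(\W)$ yield lower covers of $\mathbf u$ labelled by the same simples.

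That route gives more structure. For example, it shows that the labels of covers of $\mathbf t$ inside $[\mathbf t,\mathbf u]$ are exactly the simples of $\W$. But it depends on the interval-reduction machinery. Your route instead checks the defining conditions directly. It uses wideness only through closure under kernels and cokernels, and needs no length or finiteness hypothesis. It also makes visible that the four identities hold for an arbitrary pair $\mathbf t\subseteq\mathbf u$, with wideness needed only for the statement about labels. So it gives a self-contained justification in exactly the generality of abelian length categories in which the paper uses the result.
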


An immediate corollary is the characterisation of atoms and co-atoms in the lattice of torsion/torsionfree classes:

\begin{corollary}[{\cite[Proposition 3.5]{AsaiPfeifer}}]
The torsion classes covering the zero torsion class are in bijection with the set of isoclasses of simple objects of \( \mathcal{A} \). 

In particular all such classes have the form \(\filt(S) \) for some simple object $ S $.
\end{corollary}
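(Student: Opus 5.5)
The statement to prove is the corollary just above: the torsion classes covering the zero torsion class correspond bijectively to isoclasses of simple objects $S$ of $\A$, and every such class is $\filt(S)$. I will apply Proposition~\ref{prop:brick-labels-covers} and Proposition~\ref{prop:cover-intersection} to the torsion pair $(0,\A)$.

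First, by Proposition~\ref{prop:brick-labels-covers}(1), upper covers of $\mathbf t=0$ are in bijection with $\Up(0)$. So the task is to show that $\Up(0)$ consists exactly of the simple objects. Here $\mathbf f=\A$.

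Let $B$ be an $\A$-simple object. Then every proper quotient of $B$ lies in $0$, so every proper quotient is zero. Hence $B$ is simple in $\A$. Conversely, let $S$ be simple. Then $S$ is $\A$-simple, and we check condition (2) of the definition of torsionfree, almost torsion objects. For any $F\in\A$ and any map $S\to F$, the cokernel lies in $\A=\mathbf f$ trivially. So $\Up(0)$ is precisely the set of isoclasses of simple objects, which gives the bijection.

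It remains to identify the cover $\mathbf u$ labelled by a simple $S$. A cover is a wide interval by the Remark, so Proposition~\ref{prop:cover-intersection} applies with $\W=\mathbf u\cap\A=\mathbf u$. This gives $\mathbf u=0*\W=\W$, so $\mathbf u$ is itself wide, and its simple objects are lower labels of $\mathbf u$.

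A cover has exactly one label, so $\Low(\mathbf u)$ restricted to the interval is $\{S\}$. More precisely, the simple objects of $\W$ are upper labels of $0$ lying in $\mathbf u$. The label $S$ lies in $\mathbf u$. If a second simple $S'$ of $\W$ existed, then $\filt(S')$ would be a torsion class strictly between $0$ and $\mathbf u$. Indeed $\filt(S')=\T(S')$: quotients of $S'$ are $0$ or $S'$, and $\filt$ of a simple is closed under quotients. This contradicts $\mathbf u$ covering $0$, unless $S'=S$.

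So $\W$ has the single simple object $S$. By Theorem~\ref{thm:ringel-semibricks}, $\mathbf u=\W=\filt(S)$. The same computation shows that $\filt(S)$ is closed under quotients and extensions, so it is indeed a torsion class, and it corresponds to $S$.

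The only slightly delicate point is ensuring $\filt(S)$ is closed under quotients. A quotient of an object filtered by $S$ has all composition factors isomorphic to $S$. An object whose composition factors are all $S$ lies in $\filt(S)$ via its composition series.
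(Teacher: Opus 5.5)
Your proposal is correct and follows essentially the route the paper intends: the paper treats the statement as an immediate consequence of Proposition~\ref{prop:cover-intersection}. You combine it with the label bijection of Proposition~\ref{prop:brick-labels-covers}, using the fact that the upper labels of $0$ are exactly the simple objects, to force each cover $0 \lessdot \mathbf u$ to equal the wide subcategory $\filt(S)$. Your extra argument excluding a second simple $S'$ is valid, but the Remark preceding that proposition already gives $\mathbf u = \mathbf u \cap \A = \filt(S)$ directly for the label $S$.
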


We finally recall the standard finiteness criterion from \cite{DIJ}, extended to abelian length categories, for instance by Enomoto:

\begin{theorem}[{\cite[Theorem 5.5]{EnomotoMonobrick}}]\label{thm:brick-finite-torsion-finite}
An abelian length category is brick-finite (that is, it contains a finite number of isomorphism classes of bricks) if and only if the set of torsion pairs is finite.
\end{theorem}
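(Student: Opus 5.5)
The plan is to prove the two implications separately. In both directions torsionfree classes are compared with sets of bricks, using relative simples (Lemma \ref{lem:relative-simple-brick} and Theorem \ref{thm:f-simples-filter}).

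\emph{Brick-finite implies finitely many torsion pairs.} A torsion pair \((\mathbf t,\mathbf f)\) is determined by \(\mathbf f\), since \(\mathbf t={}^{\perp_0}\mathbf f\). By Theorem \ref{thm:f-simples-filter}, every object of \(\mathbf f\) has a filtration by \(\mathbf f\)-simple objects. Hence \(\mathbf f=\filt(\mathcal S_{\mathbf f})\), where \(\mathcal S_{\mathbf f}\) is the set of isoclasses of \(\mathbf f\)-simples; the reverse inclusion holds because \(\mathbf f\) is extension closed. By Lemma \ref{lem:relative-simple-brick}(1), each \(\mathbf f\)-simple object is a brick. So \(\mathbf f\mapsto\mathcal S_{\mathbf f}\) is an injective map from \(\torf(\A)\) to the power set of the set of isoclasses of bricks. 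If there are \(n\) bricks up to isomorphism, there are therefore at most \(2^n\) torsion pairs.

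\emph{Finitely many torsion pairs implies brick-finite.} I will show that the assignment \(B\mapsto\F(B)\), from isoclasses of bricks to torsionfree classes, is injective.

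First I check that every non-zero \(M\in\F(B)=\filt\cogen(B)\) admits a non-zero map to \(B\). Take a filtration of \(M\) with factors in \(\cogen(B)\). Its top factor \(M\twoheadrightarrow X\) is a non-zero subobject of some \(B^n\). Composing with a suitable projection \(B^n\to B\) that does not kill \(X\) gives a non-zero map \(M\to B\).

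Now suppose \(B,B'\) are bricks with \(\F(B)=\F(B')\). Since \(B'\in\F(B)\), there is a non-zero map \(B'\to B\). By Lemma \ref{lem:relative-simple-brick}(2), \(B'\) is \(\F(B')\)-simple. Because \(B\in\F(B')\), part (3) of that lemma shows the map \(B'\to B\) is a monomorphism, so \(\elll(B')\le\elll(B)\). By symmetry \(\elll(B)\le\elll(B')\). A monomorphism between objects of the same finite length is an isomorphism, so \(B\cong B'\). Thus the number of bricks is bounded by the number of torsionfree classes, which equals the number of torsion pairs.

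The only non-formal step is the injectivity of \(B\mapsto\F(B)\). Its key ingredient is the existence of a non-zero map from any non-zero object of \(\F(B)\) to \(B\), obtained by looking at the top factor of a \(\cogen(B)\)-filtration. The rest is bookkeeping with the cited results on relative simples.
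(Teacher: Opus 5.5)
Your proof is correct. One direction is the injection \(\mathbf f\mapsto\{\mathbf f\text{-simples}\}/\cong\) into subsets of bricks. The other is the injectivity of \(B\mapsto\F(B)\), using the non-zero map to \(B\) obtained from the top factor of a \(\cogen(B)\)-filtration and the length comparison of the two resulting monomorphisms. The paper gives no proof of its own; it imports the result from Enomoto, whose proof rests on the same description of torsionfree classes through their relative simple bricks. The facts you invoke (Lemma \ref{lem:relative-simple-brick} and Theorem \ref{thm:f-simples-filter}) do not depend on this theorem, so your argument is a valid self-contained proof.
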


\section{Exchange of labels across a wide interval}

 Throughout the section we fix two torsion pairs
\[
(\mathbf t,\mathbf f),\qquad (\mathbf u,\mathbf v),
\]
with
\[
\mathbf t\subsetneq \mathbf u,
\]
being a wide interval 
and we denote by \(\mathcal{W} = \mathbf{u} \cap \mathbf{f} \) the corresponding wide subcategory. Thus as we have recalled the simple objects of \(\mathcal{W}\) are torsionfree, almost torsion for \((\mathbf t,\mathbf f)\) and torsion, almost torsionfree for \((\mathbf u,\mathbf v)\).

It is a natural question to ask whether also the other labels of the lower torsion class are in any relation with labels of the upper torsion class. 

The results contained in this section are probably well-known to experts, at least in the context of finite-dimensional algebras. The collection of upper and lower labels of a given torsion class forms what is called a \emph{semibrick pair} in \cite{BH}, where mutation of such collections is also described in the derived category, in the context of  \( \tau-\)tilting finite algebras.

More recently, the same kind of explicit mutation procedure which we will use has been described in \cite[Theorem 2.5]{MutSimple} with an in depth discussion of mutability criteria in terms of existence of certain minimal left approximations.

Without referring to the derived category, we show that the necessary approximations exist  if we assume that  the torsion pair \( \tpair{t}{f} \) is widely generated and \( \tpair{u}{v} \) is widely cogenerated.

We start in the next subsection with a way to relate upper labels of \( \mathbf{u} \) with labels of \( \mathbf{t} \).

\subsection{A correspondence for upper labels}

Recall the following definition:

\begin{definition}
Let \(\mathcal C\) be a full subcategory of \(\A\). A morphism \(e:X\to C\), with \(C\in\mathcal C\), is a \(\mathcal C\)-envelope if every morphism \(X\to C'\), with \(C'\in\mathcal C\), factors through \(e\), and every endomorphism \(h:C\to C\) with \(he=e\) is an automorphism.

Dually, one defines \( \mathcal C\)-covers.
\end{definition}

We will use several times the following elementary observation:

\begin{lemma}
\label{lem:approxim-lem}
Let $ \B $ be a semibrick and let $ \mathcal{C} = \filt(\B) $. Consider the short exact sequence
\[
0 \to X \to C \to Y \to 0
\]
with $ C \in \mathcal{C} $. Then if $ \Hom(Y, \B) = \Ext^1(Y, \B) = 0 $, the map $ X \to C $ is a $ \mathcal{C}-$envelope.

Moreover in a short exact sequence of the form:
\[
0 \to X \to Y \to C \to 0
\]
with $ C \in \mathcal{C} $, if $ \Hom(X, \B) = 0 $ the map $ Y \to C $ is a $ \mathcal{C}-$envelope.
\end{lemma}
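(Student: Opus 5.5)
The plan is to reduce everything to the long exact $\Hom$--$\Ext$ sequences. The first step is to propagate the vanishing hypotheses from the semibrick $\B$ to all of $\mathcal{C}=\filt(\B)$.

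\emph{Propagation to $\mathcal C$.} Fix an object $Z$ with $\Hom(Z,\B)=0$. I claim $\Hom(Z,C')=0$ for every $C'\in\mathcal C$, and if also $\Ext^1(Z,\B)=0$, then $\Ext^1(Z,C')=0$ as well. I argue by induction on the length of a $\B$-filtration of $C'$. The base case is the hypothesis on $\B$ itself. For the inductive step, choose a short exact sequence $0\to C_1\to C'\to C_2\to 0$ in which $C_1,C_2$ have shorter filtrations. The segment
\[
\Hom(Z,C_1)\to\Hom(Z,C')\to\Hom(Z,C_2)\to\Ext^1(Z,C_1)\to\Ext^1(Z,C')\to\Ext^1(Z,C_2)
\]
is exact. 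Since $\Hom(Z,-)$ and $\Ext^1(Z,-)$ are middle-exact, vanishing on the outer terms forces vanishing on the middle terms.

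\emph{First statement.} Write $e\colon X\to C$ and $p\colon C\to Y$, and let $C'\in\mathcal C$. Applying $\Hom(-,C')$ to $0\to X\to C\to Y\to 0$ gives the exact sequence
\[
\Hom(C,C')\xrightarrow{e^*}\Hom(X,C')\to\Ext^1(Y,C').
\]
By the propagation step with $Z=Y$, the last term vanishes. Hence $e^*$ is surjective, so every map $X\to C'$ factors through $e$.

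For minimality, suppose $h\colon C\to C$ satisfies $he=e$. Then $(h-1)e=0$, so $h-1$ factors through the cokernel $p$ as $h-1=gp$ with $g\in\Hom(Y,C)$. Since $C\in\mathcal C$, the propagation step gives $\Hom(Y,C)=0$. Therefore $h=1$, which is in particular an automorphism.

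\emph{Second statement.} Write the sequence as $0\to X\xrightarrow{i}Y\xrightarrow{q}C\to 0$. By the propagation step with $Z=X$, we have $\Hom(X,C')=0$ for all $C'\in\mathcal C$. Left exactness of $\Hom(-,C')$ then yields
\[
0\to\Hom(C,C')\xrightarrow{q^*}\Hom(Y,C')\to\Hom(X,C')=0.
\]
So $q^*$ is bijective, and every map $Y\to C'$ factors (even uniquely) through $q$.

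For minimality, if $hq=q$ then $(h-1)q=0$. Since $q$ is an epimorphism, $h=1$.

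I do not expect a real obstacle. The only point requiring care is the induction on filtration length, in particular checking that middle-exactness suffices for the $\Ext^1$ vanishing in the first statement.
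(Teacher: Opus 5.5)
Your proposal is correct and follows the same route as the paper. You propagate the $\Hom$/$\Ext^1$ vanishing from $\B$ to $\filt(\B)$, then read off the factorization property and minimality from the sequence obtained by applying $\Hom(-,C')$. Your version is only more explicit: you spell out the induction on filtration length, and in the second statement you get minimality directly from $q$ being an epimorphism.
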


\begin{proof}
As every object $ N \in \mathcal{C} $ admits a finite filtration with factors isomorphic to bricks in $ \B $, we deduce that for all such objects  $ \Hom(Y, N) = \Ext^1(Y, N) = 0 $. Applying the functor $ \Hom(-, N) $ to the short exact sequence in the statement we get an isomorphism $ \Hom(C, N) \simeq \Hom(X, N) $.  

This shows that every map $ X \to N $ factors through the given map $ f :  X \to C $. Moreover, if for some map $ e : C \to C $ we have $ e \circ f = f $, that is $ (1_C - e) \circ f = 0 $, but then as the map $ \Hom(C, C) \to \Hom(X, C) $ is an isomorphism, and $ 0 \circ f = 0 $ we must have $ e = 1_C $, in particular it is an automorphism.

The same argument applies for the second sequence, after computing the long exact sequence given by the functor $ \Hom(-, N) $.
\end{proof}

The exchange process is based on the following Proposition whose proof suggests a label mutation algorithm:

\begin{proposition}
\label{prop:exchange-algo}
Assume that \(\mathbf t\) is widely generated. Let \(V\)  be torsionfree, almost torsion for \(\tpair{u}{v}\), and suppose that there is an exact sequence
\[
\begin{tikzcd}
0\ar[r] & K \ar[r] & V\ar[r] & M \ar[r] & 0
\end{tikzcd}
\]
with \(M\in\mathcal{W}\) and with \(K\) a \(\mathbf v\)-simple. Then there exists a \( \mathcal{W}-\)envelope \( e : V \to W_V \) and one of the following occurs
\begin{enumerate}
\item The morphism \(e \) is an epimorphism and \(\ker e \) is a torsionfree, almost torsion object for \( \tpair{t}{f} \).
\item The morphism \(e \) is a monomorphism and \( \coker e \) is a torsion, almost torsionfree object for \( \tpair{t}{f} \).
\end{enumerate}
\end{proposition}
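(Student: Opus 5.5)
The plan is to build the $\mathcal W$-envelope of $V$ by correcting the given map $V\to M$ with universal extensions by simples of $\mathcal W$, and then use Lemma~\ref{lem:approxim-lem} to certify envelopes. Let $\mathcal S$ be the semibrick of simple objects of $\mathcal W$, so $\mathcal W=\filt(\mathcal S)$ by Theorem~\ref{thm:ringel-semibricks}. By Proposition~\ref{prop:cover-intersection}, each $S\in\mathcal S$ lies in $\Up(\mathbf t)\cap\Low(\mathbf u)$, and $\mathbf f=\mathcal W*\mathbf v$, $\mathbf u=\mathbf t*\mathcal W$.

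First I dispose of the case $\Hom(K,\mathcal S)=0$. Then the second half of Lemma~\ref{lem:approxim-lem}, applied to $0\to K\to V\to M\to 0$, shows that $e\colon V\to M$ is a $\mathcal W$-envelope, and it is an epimorphism. It remains to show that $K$ is torsionfree, almost torsion for $(\mathbf t,\mathbf f)$.
\begin{itemize}
\item $K\in\mathbf v\subseteq\mathbf f$.
\item $K$ is $\mathbf f$-simple. Take a non-zero $K\to F$ with $F\in\mathbf f$ and write $0\to W'\to F\to V'\to 0$ with $W'\in\mathcal W$ and $V'\in\mathbf v$. The composite $K\to V'$ is non-zero, since otherwise the map factors through $W'$, against $\Hom(K,\mathcal W)=0$. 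Hence $K\to V'$ is a monomorphism by Lemma~\ref{lem:relative-simple-brick}(3), because $K$ is $\mathbf v$-simple, and so $K\to F$ is mono.
\item For condition (2'), take a non-split $0\to K\to N\to T\to 0$ with $T\in\mathbf t$. I push it out along $K\to V$ to get $0\to V\to N'\to T\to 0$. I expect this to stay non-split; otherwise I derive a contradiction from $\Hom(K,\mathcal W)=0$ together with $\Ext^1(T,M)$. Since $V\in\Up(\mathbf u)$ and $T\in\mathbf u$, we get $N'\in\mathbf u$. Then $N$ sits in $0\to N\to N'\to M\to 0$. To conclude $N\in\mathbf t=\mathbf u\cap{}^{\perp_0}\mathcal W$, I will check that $\Hom(N,\mathcal S)=0$, using $\Hom(K,\mathcal S)=0=\Hom(T,\mathcal S)$.
\end{itemize}

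In the general case $\Hom(K,\mathcal S)\neq 0$, I take a $\mathcal W$-envelope $g\colon K\to W_0$. It exists because $\mathcal W=\filt(\mathcal S)$ is a finite-length semibrick category, so iterated universal extensions by the $S\in\mathcal S$ terminate. I then form the pushout of $0\to K\to V\to M\to 0$ along $g$. This gives $V\to P$ with $P\in\mathcal W*\mathcal W=\mathcal W$, with kernel $\ker g\subseteq K$ and cokernel $\coker g$.

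The key claim is that one of $\ker g$, $\coker g$ vanishes, which yields the dichotomy.
\begin{itemize}
\item If $g$ is mono, then $V\to P$ is mono. Its cokernel $C=\coker g$ satisfies $\Hom(C,\mathcal S)=\Ext^1(C,\mathcal S)=0$ by the envelope property, so the first half of Lemma~\ref{lem:approxim-lem} shows $V\to P$ is the envelope.
\item If $g$ is not mono, I iterate with $\ker g$, whose length is strictly smaller. This needs $\ker g$ to be again $\mathbf v$-simple, or at least to satisfy the hypotheses, and I would check that subobjects of $K$ killed by the envelope remain in $\mathbf v$ with the required simplicity. The iteration terminates by length and lands in Case (1).
\end{itemize}

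In Case (2) I must show that $C=\coker e$ is torsion, almost torsionfree for $(\mathbf t,\mathbf f)$. Since $C$ is a quotient of $P\in\mathcal W\subseteq\mathbf u$ with $\Hom(C,\mathcal W)=0$, we have $C\in\mathbf u\cap{}^{\perp_0}\mathcal W=\mathbf t$. For $\mathbf t$-simplicity I argue dually to the $K$ computation, via Lemma~\ref{lem:relative-simple-brick}. This is where the hypothesis that $\mathbf t$ is widely generated enters: by the dual of Proposition~\ref{prop:wide-cogenerated-criterion}, each $\mathbf t$-simple quotient of $C$ is a quotient of some $T'\in\Low(\mathbf t)$. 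Lemma~\ref{lem:ext-orthogonality} then gives $\Ext^1(T',S)=0$ for $S\in\mathcal S$, which I use to lift maps and show that $C$ itself is maximal, hence in $\Low(\mathbf t)$ by Proposition~\ref{prop:maximal-f-simple}.

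The main obstacle is the dichotomy step: showing that $e$ is always either mono or epi, and keeping $\mathbf v$-simplicity through the iteration. My first attempt will be to use that $V$ is $\mathbf v$-simple and lies in $\Up(\mathbf u)$. Then any non-zero subobject of $V$ in $\ker e$ must already map monically to $\mathbf v$-objects, and the image of $V$ in $P$ lies in $\mathcal W$ only when $e$ is epi.
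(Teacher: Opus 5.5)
There is a genuine gap in your general case ($\Hom(K,\mathcal S)\neq 0$). Everything there rests on the existence of a $\mathcal W$-envelope $g\colon K\to W_0$, and the reason you give for it is false: iterated universal extensions in $\filt(\mathcal S)$ need not terminate. Wide subcategories of length categories need not be covariantly finite, even when $\mathcal S$ is a single brick. For the Kronecker algebra, take a quasi-simple $R$ in a homogeneous tube and the simple projective $P$. Any fixed $W\in\filt(R)$ is a sum of copies of $R[m_i]$ with $m_i\le m$, so maps $P\to R[n]$ factoring through $W$ land in $R[m]\subseteq R[n]$, while $\Hom(P,R[n])$ grows with $n$. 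Hence $P$ has no left $\filt(R)$-approximation.

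For $K=V$, the case used in Corollary \ref{cor:up-exchange}, you are simply assuming the existence part of the conclusion. In the paper, existence of the envelope is an output of the construction. It is exactly where wide generation of $\mathbf t$ and $V\in\Up(\mathbf u)$ are used:
\begin{itemize}
\item If $K\hookrightarrow B$ is a proper mono into a simple $B$ of $\mathcal W$, the pushout gives $0\to V\to P\to Q\to 0$ with $P\in\mathcal W$, where $Q=B/K$ is only known to be $\mathbf t$-simple.
\item Wide generation lifts $Q$ to some $\widetilde Q\in\Low(\mathbf t)$, and pulling back gives $0\to V\to M'\to\widetilde Q\to 0$.
\item $M'\in\mathbf f$, because the kernel of $\widetilde Q\to Q$ is a proper subobject of a $\mathbf t$-simple. $M'\in\mathbf u$, because this is a non-split extension and $V\in\Up(\mathbf u)$. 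So $M'\in\mathcal W$.
\item Lemma \ref{lem:approxim-lem} then applies, since $\Hom(\widetilde Q,\mathcal S)=0$ and $\Ext^1(\widetilde Q,\mathcal S)=0$ by Lemma \ref{lem:ext-orthogonality}.
\end{itemize}
In your plan, wide generation enters only afterwards, to argue maximality of the cokernel, which is too late.

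Your remaining steps are stated as intentions, and the idea that makes them work is missing. A non-zero map from the $\mathbf v$-simple $K$ to a simple $B$ of $\mathcal W$ is either a proper mono or a proper epi. This holds because proper quotients of $K$ lie in $\mathbf u$, proper subobjects of $B\in\Low(\mathbf u)$ lie in $\mathbf v$, and $K\not\cong B$. The same reasoning would give your dichotomy for an envelope of $K$ if one existed: a non-mono image is a proper quotient of $K$ lying in $\mathbf u\cap\mathbf f=\mathcal W$, and minimality then forces $g$ to be epi.

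The paper therefore works one simple at a time:
\begin{itemize}
\item A proper epi $K\twoheadrightarrow B$ lets you replace $K$ by its kernel and $M$ by an extension of $M$ by $B$. The kernel stays $\mathbf v$-simple, because kernels of maps from objects of $\mathbf u$ to $B\in\Low(\mathbf u)$ stay in $\mathbf u$.
\item A proper mono triggers the construction above.
\end{itemize}
So the induction can end in case (2) after several epi steps, not only in case (1) as you assert.

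Two smaller points in your first case. The pushed-out sequence $0\to V\to N'\to T\to 0$ stays non-split because $\Hom(T,M)=0$. You must also prove $N\in\mathbf u$, which does not follow from $N\subseteq N'\in\mathbf u$. To get it, let $N_0$ be the $\mathbf u$-torsion part of $N$ and push $0\to N\to N'\to M\to 0$ out along $N\to N/N_0$. This gives an object of $\mathbf u\cap\mathbf f=\mathcal W$, forcing $N/N_0\in\mathcal W\cap\mathbf v=0$. With these fixes, your direct check of (2') and your $\mathbf f$-simplicity argument via $\mathbf f=\mathcal W*\mathbf v$ are valid alternatives to the paper's maximality argument.
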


\begin{proof}
We will do an induction on the length of the kernel \( K \) and divide the proof in three steps. The first two steps are independent of the length of \( K \). Let \( \B \) be the semibrick corresponding to \( \mathcal{W} \).

\noindent\textbf{Step 1}: If \( \Hom(K, \B) = 0 \) then the map \( V \to M \) is already a \( \mathcal{W}-\) envelope by Lemma \ref{lem:approxim-lem}. It remains to show that \( K \) is torsionfree, almost torsion with respect to \( \tpair{t}{f} \). 

First, we show that it is \( \mathbf{f}-\)simple. It is non-zero and contained in $ \mathbf{v} \subset \mathbf{f} $. So consider a proper quotient \( K \to Q \). As \( K \) is \( \mathbf{v}-\)simple, \( Q \in \mathbf{u} \). However, \( K \in {}^{\perp_0} \mathcal{W} \), which is a torsion class. So \( Q \in \mathbf{u} \cap {}^{\perp_0} \mathcal{W} \) which is  \( \mathbf{t} \) by Proposition \ref{prop:cover-intersection}.

Now we need to show maximality among \( \mathbf{f}-\)simples. So let $ K \to K' $ be a non-zero map to a relative simple, this map is necessarily a monomorphism. Assume it is proper. Form the pushout:
\[
\begin{tikzcd}
0 \ar[r] & K \ar[r] \ar[d, hook] & V \ar[r] \ar[d, hook] & M \ar[r] \ar[d, equals] & 0 \\
0 \ar[r] & K' \ar[d, two heads] \ar[r] & P \ar[d, two heads] \ar[r] & M \ar[r] & 0 \\
& C \ar[r, equals] & C
\end{tikzcd}
\]
Then $ P \in \mathbf{f} $, being an extension of $ M \in \mathcal{W} \subset \mathbf{f} $ and $ K' $. Moreover \( C \in \mathbf{t} \) being a proper quotient of \( K' \).

 Notice that the short exact sequence \( 0 \to V \to P \to C \to 0 \) is non-split, otherwise \( C = 0 \) as \( P \in \mathbf{f} \), thus being \( V \) torsionfree, almost torsion \( P \in \mathbf{u} \).

In conclusion \( P \in \mathbf{f} \cap \mathbf{u} = \mathcal{W} \). But then the map \( K \to  V \to P \) is the zero map, as \( \Hom(K, \mathcal{W}) = 0 \), which is a contradiction as it is the composite of two non-zero monomorphisms. This shows that the monomorphism \( K \to K' \) can't be proper, so \( K \) is maximal among \( \mathbf{f}-\)simples.

\noindent \textbf{Observation}: If \( f : K \to B \) is non-zero, for some \( B \in \mathcal{B} \) then it is either a (proper) monomorphism or a (proper) epimorphism. 

It clearly can't be an isomorphism, as \( K \in \mathbf{v} \) and \( B \in \mathbf{u} \). So assume it has a non-zero kernel, then the image of \( f \) is a proper quotient of \( K \), thus it lies in \( \mathbf{u} \) and is non-zero. But every proper subobject of \( B \) lies in \( \mathbf{v} \) as \( B \) is \( \mathbf{u}-\)simple. Thus \( f \) must be a proper epimorphism. 

\noindent \textbf{Step 2}: Assume we have a proper monomorphism \( K \to B \) for some \( B \in \B \). Then consider once again the pushout diagram
\[
\begin{tikzcd}
0 \ar[r] & K \ar[r] \ar[d, hook] & V \ar[r] \ar[d, hook] & M \ar[r] \ar[d, equals] & 0 \\
0 \ar[r] & B \ar[d, two heads] \ar[r] & P \ar[d, two heads] \ar[r] & M \ar[r] & 0 \\
& Q \ar[r, equals] & Q
\end{tikzcd}
\]
Notice that \( P \in \mathcal{W} \). I claim that \( Q \) is \( \mathbf{t}-\)simple. It is non-zero and contained in \( \mathbf{t} \) being a proper quotient of \( B \). Now it is enough to show that it has no proper non-zero subobject in \( \mathbf{t} \). Assume \( T \to Q \) is a non-zero subobject. Then, pulling back along \( P \to Q \) we obtain:
\[
\begin{tikzcd}
0 \ar[r] & V \ar[r] \ar[d, equals] & R \ar[r] \ar[d, hook] & T \ar[r] \ar[d, hook] & 0 \\
0 \ar[r] & V  \ar[r] & P \ar[d, two heads] \ar[r] & Q \ar[r] \ar[d, two heads] & 0 \\
& & C \ar[r, equals] & C
\end{tikzcd}
\]
Notice that \( R \in \mathbf{f} \) as it is a subobject of  \(  P \in \mathcal{W} \). Moreover, as the sequence \( 0 \to V \to R \to T \to 0 \) is non-split, \( R \in \mathbf{u} \) as \( V \) is torsionfree, almost torsion for \( \tpair{u}{v} \) and \( T \in \mathbf{t} \subset \mathbf{u} \). Thus \( R \in \mathcal{W} \), but as this is a wide subcategory, so is the cokernel \( C \). But \( C \) should be in the torsion class \( \mathbf{t} \) as it is (isomorphic to) a quotient of \( Q \). Thus \( C = 0 \) and \( T \) is not a proper subobject. 

Now \( Q \) may not be torsion, almost torsionfree, however, as we assumed \( \mathbf{t} \) to be widely generated, there exists a torsion, almost torsionfree object \( \widetilde{Q} \) that has \( Q \) as a quotient. 

Construct the pullback
\[
\begin{tikzcd}
& & L \ar[r, equals] \ar[d, hook] & L \ar[d, hook] \\
0 \ar[r] & V  \ar[d, equals] \ar[r] & M' \ar[d, two heads] \ar[r] & \widetilde{Q} \ar[r] \ar[d, two heads] & 0 \\
0 \ar[r] & V  \ar[r] & P \ar[r] & Q \ar[r] & 0
\end{tikzcd}
\]
\( L \) is a possibly zero, proper subobject of a torsion, almost torsionfree, thus it is contained in \( \mathbf{f} \). This means that \( M' \) is in \( \mathbf{f} \) as an extension of torsionfree objects, moreover \( M' \in \mathbf{u} \) as part of a non-split short exact sequence with \( \widetilde{Q} \in \mathbf{u} \) and \( V \) torsionfree, almost  torsion as above. This shows \( M' \in \mathcal{W} \) and we obtain that \( V \to M' \) is the required envelope using Lemma \ref{lem:approxim-lem} after noticing that the required orthogonality conditions hold for the torsion, almost torsionfree object \( \widetilde{Q} \) and the simple objects of \( \mathcal{W} \) which are torsionfree, almost torsion for the same torsion pair. 

\noindent \textbf{Step 3}: Here we use the length of \( K \). If it is minimal (that is 1, as \( K \ne 0 \) ) then \( K \) is simple and we can't have a proper epimorphism \( K \to B \) for any \( B \in \B \). Thus this completes the base case of induction. So assume the algorithm works for lengths up to \( n \) and assume we have a \( K \) with a proper epimorphism onto some \( B \). We will construct a new short exact sequence \( 0 \to K' \to V \to M' \to 0 \) with \( K' \) being \( \mathbf{v}-\)simple of strictly lower length and \(M' \in \mathcal{W} \), so that we can conclude by induction hypothesis.  

This is once again obtained by taking a pushout of the epimorphism \( K \to B \):
\[
\begin{tikzcd}
 & K' \ar[r, equals] \ar[d, hook] & K' \ar[d, hook] \\
0 \ar[r] & K \ar[d, two heads] \ar[r] & V \ar[d, two heads] \ar[r] & M \ar[r] \ar[d, equals] & 0 \\
0 \ar[r] & B  \ar[r] & M' \ar[r] & M \ar[r] & 0
\end{tikzcd}
\] 
Notice that \( M' \in \mathcal{W} \) as required and \( \ell(K') < \ell(K) \). So it is enough to show that \( K' \) is \( \mathbf{v}-\)simple. But this is immediate: take a proper quotient \( S \) of \( K' \), once again this induces a short exact sequence \( 0 \to S \to S' \to B \to 0 \), where \( S' \in \mathbf{u} \) being a proper quotient of the \( \mathbf{v}-\)simple \( K \). Thus \( S \) is in \( \mathbf{u} \) as \( B \) is torsion, almost torsionfree for \( \tpair{u}{v} \). 
\end{proof}

\begin{corollary}
\label{cor:up-exchange}
Assume that \(\mathbf t\) is widely generated. Let \(V\)  be torsionfree, almost torsion for \(\tpair{u}{v}\)
Then there exists a \( \mathcal{W}-\)envelope \( e : V \to W_V \) and one of the following occurs
\begin{enumerate}
\item The morphism \(e \) is an epimorphism and \(\ker e \) is a torsionfree, almost torsion object for \( \tpair{t}{f} \).
\item The morphism \(e \) is a monomorphism and \( \coker e \) is a torsion, almost torsionfree object for \( \tpair{t}{f} \).
\end{enumerate}
\end{corollary}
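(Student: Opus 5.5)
The plan is to reduce the corollary to Proposition \ref{prop:exchange-algo}. It is enough to exhibit, for every \(V \in \Up(\mathbf u)\), a short exact sequence
\[
0 \to K \to V \to M \to 0
\]
with \(M \in \mathcal W\) and \(K\) a \(\mathbf v\)-simple. Since \(V\) is \(\mathbf v\)-simple, hence non-zero and lying in \(\mathbf v \subseteq \mathbf f\), the natural candidate is the trivial sequence \(0 \to V \xrightarrow{1} V \to 0 \to 0\), with \(K = V\) and \(M = 0\).

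First we check the hypotheses of Proposition \ref{prop:exchange-algo} for this sequence.
\begin{itemize}
\item \(0 \in \mathcal W\), because \(\mathcal W = \mathbf u \cap \mathbf f\) is a non-empty, isomorphism-closed subcategory closed under kernels.
\item \(V\) is \(\mathbf v\)-simple by the very definition of torsionfree, almost torsion for \(\tpair{u}{v}\).
\end{itemize}
The remaining hypotheses, namely that \(\mathbf t\) is widely generated and that \(V\) is torsionfree, almost torsion for \(\tpair{u}{v}\), are exactly those of the corollary. The proposition therefore yields a \(\mathcal W\)-envelope \(e : V \to W_V\) together with the dichotomy: either \(e\) is an epimorphism with \(\ker e \in \Up(\mathbf t)\), or \(e\) is a monomorphism with \(\coker e \in \Low(\mathbf t)\).

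The only step needing care is that the proof of the proposition does not secretly use \(M \neq 0\). Inspecting it, Step 1 treats the case \(\Hom(K,\B)=0\). Here the envelope \(V \to 0\) is justified by Lemma \ref{lem:approxim-lem} applied to \(0 \to K \to V \to 0 \to 0\), which is fine. The claim that \(K = V\) lies in \(\Up(\mathbf t)\) uses \(K \in {}^{\perp_0}\mathcal W\) and the pushout argument, and these go through with \(M = 0\): the pushout object \(P\) is just \(K'\).

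Steps 2 and 3 likewise only use that \(M \in \mathcal W\), which is closed under extensions, so the pushouts \(P\) and \(M'\) still lie in \(\mathcal W\). Hence the degenerate case is covered and the corollary follows directly.
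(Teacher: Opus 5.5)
Your proposal is correct and is exactly the paper's argument: apply Proposition \ref{prop:exchange-algo} to the trivial sequence \(0 \to V \to V \to 0 \to 0\), noting that \(V\) is \(\mathbf v\)-simple and \(0 \in \mathcal W\). Your extra check that Steps 1--3 survive the degenerate case \(M = 0\) is harmless but not needed, since the proposition as stated only requires \(M \in \mathcal W\).
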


\begin{proof}
Apply Proposition \ref{prop:exchange-algo} to the trivial short exact sequence \[ 0 \to V \to V \to 0 \to 0. \] Of course \( V \) is \( \mathbf{v}-\)simple and \( 0 \in \mathcal{W} \).
\end{proof}

\begin{remark}
Notice that in the corollary above, if \( \Hom(V, \mathcal{W}) = 0 \) then the envelope is the zero map and \( V \) itself is a torsionfree, almost torsion object for \( \tpair{t}{f} \).
\end{remark}

\subsection{Exchange for lower labels}

We now show how to associate lower labels of \( \mathbf{u} \) to lower labels of \( \mathbf{t} \).

\begin{proposition}
\label{prop:lower-exchange}
Assume that \(\mathbf t\) is widely generated. Let \(U\)  be torsion, almost torsionfree for \(\tpair{u}{v}\). Then if \( U \not \in \mathcal{W} \), there exists a short exact sequence
\[
0 \to W_T \to T \to U \to 0 
\]
where the map \( W_T \to T \) is a \( \mathcal{W}-\)cover and \( T \) is torsion, almost torsionfree for \( \tpair{t}{f} \). Moreover, if \( T' \) is a torsion, almost torsionfree object for \( \tpair{t}{f} \) not isomorphic to \( T \), we have \( \Hom(T', U) = 0 \).
\end{proposition}

\begin{proof}
First, notice that all the torsion, almost torsionfree objects of \( \tpair{u}{v} \) not contained in \( \mathcal{W} \) are contained in \( {}^{\perp_0}\mathcal{W} \cap \mathbf{u} = \mathbf{t} \) ( see Proposition \ref{prop:cover-intersection}) and are in particular, \( \mathbf{t}-\)simple, as all their proper subobjects lie in \( \mathbf{v} \subset \mathbf{f} \). 

Now, if such an object \( U \) is already a torsion, almost torsionfree object for \( \tpair{t}{f} \), then we are done, as the trivial short exact sequence \( 0 \to 0 \to U \to U \to 0 \) has the claimed properties and Hom-orthogonality to non-isomorphic torsion, almost torsionfree object is immediate by Proposition \ref{prop:maximal-f-simple}. 

If it is not, then as \( \mathbf{t} \) is widely generated, we have some torsion, almost torsionfree object \( T \) that has \( U \) as a proper quotient. Consider the corresponding short exact sequence
\[
0 \to K \to T \to U \to 0
\]
As \( U \) is torsion, almost torsionfree for \( \tpair{u}{v} \), the kernel lies in \( \mathbf{u} \), but as \( T \) is in particular \( \mathbf{t}-\)simple \( K \in \mathbf{f} \). So \( K \in \mathcal{W} \) as required.
As \( \Hom(\mathcal{W}, U) = 0 \), the map \( K \to T \) is a \( \mathcal{W}-\)cover, by the dual of Lemma \ref{lem:approxim-lem}.

Now, let \( T' \) be a torsion, almost torsionfree object, not-isomorphic to \( T \). Then, applying the functor \( \Hom(T', -) \) to the approximation sequence we get the exact sequence
\[
\Hom(T', T) \to \Hom(T', U) \to \Ext^1(T', K)
\]
but \( \Hom( T', T) = 0 \) as these are non-isomorphic lower labels, moreover \( \Ext^1(T', B) = 0 \) for all the simple objects of the wide subcategory \( \mathcal{W} \) by Lemma \ref {lem:ext-orthogonality} so that also \( \Ext^1(T', K) = 0 \) as \( K \in \mathcal{W} \) is filtered by the simple \(\mathcal{W}-\)objects.
Thus \( \Hom(T', U) = 0 \).
\end{proof}

\subsection{The bijection of labels}

We write down explicitly the dual results before giving the full exchange theorem:

\begin{corollary}
\label{cor:up-exchange-dual}
Assume that \(\mathbf v\) is widely cogenerated. Let \(T\)  be torsion, almost torsionfree for \(\tpair{t}{f}\)
Then there exists a \( \mathcal{W}-\)cover \( c : W_T \to T \) and one of the following occurs
\begin{enumerate}
\item The morphism \(c \) is an epimorphism and \(\ker c \) is a torsionfree, almost torsion object for \( \tpair{u}{v} \).
\item The morphism \(c \) is a monomorphism and \( \coker c \) is a torsion, almost torsionfree object for \( \tpair{u}{v} \).
\end{enumerate}
\end{corollary}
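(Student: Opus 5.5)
This corollary is the formal dual of Corollary \ref{cor:up-exchange}, so I will transport that result through the lattice isomorphism \(\torf(\A)\to\tors(\A^{\mathrm{op}})\), \(\mathbf f\mapsto\mathbf f\), rather than repeat the induction. In \(\A^{\mathrm{op}}\) the roles of torsion and torsionfree classes swap. The torsion pairs \((\mathbf t,\mathbf f)\) and \((\mathbf u,\mathbf v)\) in \(\A\) become the torsion pairs \((\mathbf v,\mathbf u)\) and \((\mathbf f,\mathbf t)\) in \(\A^{\mathrm{op}}\). Since \(\mathbf t\subsetneq\mathbf u\) means \(\mathbf v\subsetneq\mathbf f\), the lower pair in \(\A^{\mathrm{op}}\) is \((\mathbf v,\mathbf u)\) and the upper pair is \((\mathbf f,\mathbf t)\).

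The first step is to check that the hypotheses transfer.
\begin{itemize}
\item The corresponding interval is \(\mathbf f\cap\mathbf u=\mathcal W\). Wideness is self-dual, so the interval is still wide, with the same wide subcategory \(\mathcal W\) and the same semibrick \(\B\) of simple objects.
\item The hypothesis of Corollary \ref{cor:up-exchange} in \(\A^{\mathrm{op}}\) is that the lower torsion class \(\mathbf v\) is widely generated there. This says \(\mathbf v=\filt\gen_{\A^{\mathrm{op}}}(\mathcal W')=\filt\cogen_{\A}(\mathcal W')=\F(\mathcal W')\) for a wide \(\mathcal W'\), which is exactly the assumption that \(\mathbf v\) is widely cogenerated in \(\A\).
\end{itemize}

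The second step is to translate the notions appearing in the statement.
\begin{itemize}
\item An object \(T\) that is torsion, almost torsionfree for \((\mathbf t,\mathbf f)\) in \(\A\) is torsionfree, almost torsion for the upper pair \((\mathbf f,\mathbf t)\) in \(\A^{\mathrm{op}}\). This holds because the definitions of relative simples and of the extension condition (2') are exchanged under passage to the opposite category.
\item A \(\mathcal W\)-envelope in \(\A^{\mathrm{op}}\) is a \(\mathcal W\)-cover in \(\A\).
\item Epimorphisms become monomorphisms and kernels become cokernels.
\end{itemize}

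Applying Corollary \ref{cor:up-exchange} in \(\A^{\mathrm{op}}\) gives a \(\mathcal W\)-envelope \(T\to W_T\) there, that is, a \(\mathcal W\)-cover \(c\colon W_T\to T\) in \(\A\), and one of two cases holds.
\begin{enumerate}
\item The envelope is an epimorphism in \(\A^{\mathrm{op}}\) whose kernel is torsionfree, almost torsion for the lower pair \((\mathbf v,\mathbf u)\). In \(\A\), \(c\) is a monomorphism and \(\coker c\) is torsion, almost torsionfree for \((\mathbf u,\mathbf v)\).
\item The envelope is a monomorphism in \(\A^{\mathrm{op}}\) whose cokernel is torsion, almost torsionfree for \((\mathbf v,\mathbf u)\). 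In \(\A\), \(c\) is an epimorphism and \(\ker c\) is torsionfree, almost torsion for \((\mathbf u,\mathbf v)\).
\end{enumerate}
These are precisely the two alternatives in the statement, listed in the opposite order.

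The only real obstacle is bookkeeping. One must make sure the dual statement pairs the lower and upper torsion classes correctly, since the order reverses. One must also confirm that each auxiliary result used in the proof of Proposition \ref{prop:exchange-algo} is available in dual form: Proposition \ref{prop:cover-intersection} already states its dual, and Lemmas \ref{lem:approxim-lem} and \ref{lem:ext-orthogonality} and Proposition \ref{prop:wide-cogenerated-criterion} are valid for any abelian length category, hence also for \(\A^{\mathrm{op}}\). Alternatively, one can rerun the three-step induction with pullbacks and pushouts interchanged, inducting on the length of the cokernel of a map \(M\to T\) from \(M\in\mathcal W\) whose cokernel is \(\mathbf t\)-simple.
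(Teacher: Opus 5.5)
Your proposal is correct and is essentially the paper's argument. The paper gives no separate proof and presents this corollary as the formal dual of Corollary \ref{cor:up-exchange}. Your passage to \(\A^{\mathrm{op}}\) carries out that dualization with the right bookkeeping: the lower pair becomes \((\mathbf v,\mathbf u)\), the upper pair becomes \((\mathbf f,\mathbf t)\), \(\mathcal W\) is unchanged, ``widely generated in \(\A^{\mathrm{op}}\)'' means ``widely cogenerated in \(\A\)'', envelopes become covers, and the two alternatives are swapped.
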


\begin{proposition}
\label{prop:upper-exchange-dual}
Assume that \(\mathbf v\) is widely cogenerated. Let \(F\)  be torsionfree, almost torsion for \(\tpair{t}{f}\). Then if \( F \not \in \mathcal{W} \), there exists a short exact sequence
\[
0 \to F \to V \to W_V \to 0 
\]
where the map \( V \to W_V \) is a \( \mathcal{W}-\)envelope and \( V \) is torsionfree, almost torsion for \( \tpair{u}{v} \). Moreover, if \( V' \) is a torsionfree, almost torsion object for \( \tpair{u}{v} \) not isomorphic to \( V \), we have \( \Hom(F, V') = 0 \).
\end{proposition}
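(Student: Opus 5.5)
This is the dual of Proposition \ref{prop:lower-exchange}, and I would prove it by dualizing that argument step by step rather than by formally passing to \(\A^{\mathrm{op}}\). Passing to \(\A^{\mathrm{op}}\) would swap the roles of \(\mathbf t\) and \(\mathbf u\) as well, so it is cleaner to write the argument out directly.

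\textbf{Step 1: locate \(F\) in \(\mathbf v\) and show it is \(\mathbf v\)-simple.} By Proposition \ref{prop:cover-intersection} we have \(\mathbf v = \mathbf f\cap \mathcal W^{\perp_0}\). Let \(F\) be torsionfree, almost torsion for \(\tpair{t}{f}\) with \(F\notin\mathcal W\). Then \(F\) is \(\mathbf f\)-simple. Any nonzero map \(B\to F\) with \(B\) a simple object of \(\mathcal W\) is a map between two upper labels of \(\mathbf t\). Since \(B\not\cong F\), Proposition \ref{prop:maximal-f-simple} forces this map to be zero. As every object of \(\mathcal W\) is filtered by its simples, \(\Hom(\mathcal W,F)=0\), so \(F\in\mathbf v\).

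Every proper quotient of \(F\) lies in \(\mathbf t\subset\mathbf u\). Hence \(F\) is \(\mathbf v\)-simple.

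\textbf{Step 2: the case where \(F\) is already an upper label of \(\mathbf u\).} If \(F\) is torsionfree, almost torsion for \(\tpair{u}{v}\), take the trivial sequence \(0\to F\to F\to 0\to 0\). The Hom-vanishing \(\Hom(F,V')=0\) for \(V'\not\cong F\) then follows from Proposition \ref{prop:maximal-f-simple} applied to \(\tpair{u}{v}\).

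\textbf{Step 3: otherwise, build the sequence.} Since \(\mathbf v\) is widely cogenerated, Proposition \ref{prop:wide-cogenerated-criterion} gives a torsionfree, almost torsion object \(V\) for \(\tpair{u}{v}\) containing \(F\) as a proper subobject. Consider
\[
0\to F\to V\to C\to 0 .
\]
Because \(F\) is torsionfree, almost torsion for \(\tpair{t}{f}\) and \(C\neq0\), condition (2) of the definition applied to the inclusion \(F\to V\) with \(V\in\mathbf f\) gives \(C\in\mathbf f\). On the other hand, \(C\) is a proper quotient of the \(\mathbf v\)-simple object \(V\), so \(C\in\mathbf u\). Hence \(C\in\mathcal W\).

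By Step 1, \(\Hom(F,\B)\) vanishes on the semibrick \(\B\) of simples of \(\mathcal W\). Actually only the weaker fact \(\Hom(F,\B)=0\) is needed: by Proposition \ref{prop:maximal-f-simple}, \(\Hom(F,B)=0\) for each simple \(B\) of \(\mathcal W\), since \(B\) is a non-isomorphic \(\mathbf f\)-simple. The second part of Lemma \ref{lem:approxim-lem} then shows that \(V\to C\) is a \(\mathcal W\)-envelope. Set \(W_V:=C\).

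\textbf{Step 4: Hom-orthogonality.} Let \(V'\not\cong V\) be another upper label of \(\mathbf u\). Apply \(\Hom(-,V')\) to the sequence to get the exact sequence
\[
\Hom(V,V')\to\Hom(F,V')\to\Ext^1(W_V,V').
\]
The left term vanishes by Proposition \ref{prop:maximal-f-simple}. For the right term, the simples of \(\mathcal W\) are lower labels of \(\mathbf u\), so Lemma \ref{lem:ext-orthogonality} gives \(\Ext^1(B,V')=0\) for every \(B\in\B\). Filtering \(W_V\) by the simples of \(\mathcal W\) then gives \(\Ext^1(W_V,V')=0\). Therefore \(\Hom(F,V')=0\).

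The only delicate point is Step 3, namely checking that the cokernel lands in \(\mathbf f\). This uses the almost-torsion property of \(F\) in its cokernel form, applied to the map \(F\to V\) with \(V\in\mathbf v\subset\mathbf f\).
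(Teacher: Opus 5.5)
Your proof is correct and is essentially the paper's approach: the paper gives no separate argument and obtains this statement as the dual of Proposition~\ref{prop:lower-exchange}, and your Steps 1--4 are exactly the step-by-step dualization of that proof. There is one slip in the citations. In Step 3 the vanishing \(\Hom(F,\B)=0\) does not come from Step 1, which gave \(\Hom(\B,F)=0\); it comes from Proposition~\ref{prop:maximal-f-simple}, as your next sentence correctly says, and this same vanishing is also what makes \(F\to 0\) a \(\mathcal W\)-envelope in Step 2.
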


\begin{theorem}
\label{thm:full-mutation-labels}
Let \( \tpair{t}{f} \) and \( \tpair{u}{v} \) be torsion pairs with \( \mathbf{t} \subsetneq \mathbf{u} \) a wide interval and such that the lower pair is widely generated, while the upper pair is widely cogenerated.
Then there is a bijection
\[
\Phi : \Up(\mathbf u)\coprod \Low(\mathbf u) \to \Up(\mathbf t)\coprod \Low(\mathbf t).
\]
defined as follows:
\begin{enumerate}
\item for an upper label $ [V] $ of $ \mathbf{u} $, take a $ \mathcal{W}-$envelope $ e_V : V \to W_V $. This is either a monomorphism or an epimorphism, if mono, $ \Phi([V]) := [\coker e_V] $ if epi $ \Phi([V]) := [\ker e_V] $.
\item for a lower label $ [U] $, if $ U \in \mathcal{W} $, then $ \Phi([U]) := [U] $ otherwise we fix $ \Phi([U]) := [T] $ where $ T $ is the unique torsion, almost torsionfree object with \( \Hom(T, U) \ne 0 \).
\end{enumerate}
\end{theorem}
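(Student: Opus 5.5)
Since \(\mathbf t\) is widely generated and \(\mathbf v\) is widely cogenerated, Corollary \ref{cor:up-exchange}, Proposition \ref{prop:lower-exchange}, and their duals Corollary \ref{cor:up-exchange-dual} and Proposition \ref{prop:upper-exchange-dual} all apply to the wide interval. The plan is to use the first pair to show that \(\Phi\) is well defined, and the dual pair to build a map \(\Psi:\Up(\mathbf t)\coprod\Low(\mathbf t)\to\Up(\mathbf u)\coprod\Low(\mathbf u)\). We then check that \(\Phi\) and \(\Psi\) are mutually inverse.

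\textbf{Well-definedness of \(\Phi\).} Envelopes are unique up to isomorphism, so \(\ker e_V\) and \(\coker e_V\) depend only on \([V]\), and Corollary \ref{cor:up-exchange} says which case occurs. For a lower label \(U\in\mathcal W\), \(U\) is a brick in \(\mathcal W\) which is \(\mathbf u\)-simple. Every proper subobject of \(U\) lies in \(\mathbf v\), so \(U\) is simple in \(\mathcal W\), and by Proposition \ref{prop:cover-intersection} it is an upper label of \(\mathbf t\). For \(U\notin\mathcal W\), Proposition \ref{prop:lower-exchange} produces \(T\) with \(\Hom(T,U)\neq0\) and gives uniqueness.

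\textbf{The inverse \(\Psi\).} Define \(\Psi\) by duality. For a lower label \(T\) of \(\mathbf t\), take the \(\mathcal W\)-cover \(c:W_T\to T\) and send \([T]\) to \([\ker c]\) or \([\coker c]\), using Corollary \ref{cor:up-exchange-dual}. For an upper label \(F\) of \(\mathbf t\), set \(\Psi([F])=[F]\) if \(F\in\mathcal W\), where \(F\) is then a simple of \(\mathcal W\) and hence a lower label of \(\mathbf u\). Otherwise set \(\Psi([F])=[V]\), with \(V\) the unique upper label of \(\mathbf u\) such that \(\Hom(F,V)\neq0\), given by Proposition \ref{prop:upper-exchange-dual}.

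\textbf{Checking \(\Psi\Phi=\mathrm{id}\).} This is a case analysis.
\begin{enumerate}
\item If \(U\) is a lower label of \(\mathbf u\) with \(U\in\mathcal W\), then \(\Psi\Phi[U]=[U]\) trivially.
\item If \(U\notin\mathcal W\), then \(0\to W_T\to T\to U\to 0\) has \(W_T\to T\) a \(\mathcal W\)-cover. Since \(U\ne0\), the cover \(c\) of \(T\) is mono and \(\coker c=U\), so \(\Psi[T]=[U]\).
\item If \(V\) is an upper label of \(\mathbf u\) and \(e_V\) is epi, then \(F=\ker e_V\) is an upper label of \(\mathbf t\). First, \(F\notin\mathcal W\): otherwise \(V\), an extension of \(W_V\) by \(F\), would lie in \(\mathcal W\subset\mathbf u\), which is impossible. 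So Proposition \ref{prop:upper-exchange-dual} applies, and \(\Hom(F,V)\neq0\) via the inclusion. Uniqueness in Proposition \ref{prop:upper-exchange-dual} then forces \(\Psi[F]=[V]\).
\item If \(e_V\) is mono with \(Q=\coker e_V\) a lower label of \(\mathbf t\), we need the \(\mathcal W\)-cover of \(Q\) to be \(W_V\to Q\) with kernel \(V\). By the dual of Lemma \ref{lem:approxim-lem}, this holds once \(\Hom(\mathcal W,V)=0\) and \(\Ext^1(\mathcal W,V)=0\), i.e.\ these vanish on the simples \(B\) of \(\mathcal W\). Here \(\Hom(B,V)=0\) since \(B\in\mathbf u\) and \(V\in\mathbf v\). \(\Ext^1(B,V)=0\) by Lemma \ref{lem:ext-orthogonality} for \(\tpair{u}{v}\), since \(B\) is a lower label of \(\mathbf u\).
\end{enumerate}
The identity \(\Phi\Psi=\mathrm{id}\) follows by the symmetric (dual) argument: it uses Lemma \ref{lem:ext-orthogonality} for \(\tpair{t}{f}\) with \(B\) an upper label of \(\mathbf t\), and the uniqueness in Proposition \ref{prop:lower-exchange}.

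\textbf{Main obstacle.} The delicate point is the bookkeeping of cases. We must make sure that \(\Phi\) of an upper label of \(\mathbf u\) in the mono case really lands in a lower label of \(\mathbf t\) that is not in \(\mathcal W\), and that \(\Psi\) correctly distinguishes this case from the \(\mathcal W\)-simple case. This is done via the vanishing of the relevant Hom and Ext groups. Degenerate cases, namely zero envelopes or covers, also need care. For instance, when \(e_V=0\), \(V\) is itself an upper label of \(\mathbf t\), \(V\notin\mathcal W\), and \(\Hom(V,V)\ne0\), so \(\Psi[V]=[V]\).
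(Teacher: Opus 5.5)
Your proposal is correct and follows the paper's proof. You obtain well-definedness from Corollary~\ref{cor:up-exchange} and Proposition~\ref{prop:lower-exchange}, build an inverse $\Psi$ from the dual statements, check $\Psi\circ\Phi=\mathrm{id}$ by the same cases, and get $\Phi\circ\Psi=\mathrm{id}$ by duality. The only differences are minor: in the epi case you invoke the uniqueness clause of Proposition~\ref{prop:upper-exchange-dual} rather than redoing the Hom/$\Ext^1$ computation, and you make explicit both the $\Ext^1$-vanishing (via Lemma~\ref{lem:ext-orthogonality}) needed in the mono case and the fact that a lower label $U\in\mathcal W$ is simple in $\mathcal W$, which the paper leaves implicit.
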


\begin{proof}
First we notice that the map is well defined: the associated objects exist and are upper and lower labels of \( \tpair{t}{f} \) by Corollary \ref{cor:up-exchange} and  Proposition \ref{prop:lower-exchange}, moreover isomorphic objects have isomorphic envelopes and via the factorisation property, isomorphic kernels/cokernels.

To show it is a bijection, we consider the following map and show it is its inverse.
Let \( \Psi :  \Up(\mathbf t)\coprod \Low(\mathbf t) \to  \Up(\mathbf u)\coprod \Low(\mathbf u) \) be defined as follows:
\begin{enumerate}
\item  for a lower label $ [T] $, take  a \(\mathcal{W}-\)cover \( c_T : W_T \to T \).  This is either a monomorphism or an epimorphism, if mono, $ \Psi([T]) := [\coker c_T] $ if epi $ \Psi([T]) := [\ker c_T] $.
\item for an upper label \( [F] \), if \( F \in \mathcal{W} \) then \( \Psi([F]) = [F] \), otherwise we fix \( \Psi([F]) = [V] \) where \( V \) is the unique (up to isomorphism) torsionfree, almost torsion object for \( \tpair{u}{v} \) admitting a non-zero map from \( F \).
\end{enumerate}

Once again, this map is well defined by Corollary \ref{cor:up-exchange-dual} and  Proposition \ref{prop:upper-exchange-dual}. 

Let's compute \( \Psi(\Phi([V])) \) for an upper label of \( \tpair{u}{v} \). If the envelope is a monomorphism, then we get a lower label \(  [\coker e_V] \) of \( \tpair{t}{f} \). But observe that the short exact sequence
\[
0 \to V \to W_V \to \coker e_V \to 0
\]
also gives a \( \mathcal{W}-\)cover of \( \coker e_V \) by the dual of Lemma \ref{lem:approxim-lem}. So \( [\coker e_V] \) is a lower label with epimorphic cover and \( \Psi( [\coker e_V] ) = [ V] \).

On the other hand, if the envelope is an epimorphism, we have the short exact sequence
\[
0 \to \ker e_V \to V \to W_V \to 0
\]
and \( [ \ker e_V ] \) is an upper label for \( \tpair{t}{f} \). Notice that \( \ker e_V \not \in \W \) as \( \Hom(\mathcal{W}, V) = 0 \), moreover \( V \) is the only torsionfree, almost torsion object up to isomorphism with \( \Hom( \ker e_V, V) \ne 0 \). This is clear, applying to the short exact sequence the functor \( \Hom(-, V') \) and using the usual Hom and Ext-orthogonality properties. Thus \( \Psi([\ker e_V ]) = [V] \). 

Let's now compute  \( \Psi(\Phi([U])) \) for a lower label of \( \tpair{u}{v} \). If \( [U] \in \mathcal{W} \) then we are done. So assume \( U \not \in \mathcal{W} \) and let \( [T] \) be the associated lower label of \( \tpair{t}{f} \). In every short exact sequence
\[
0 \to W_T \to T \to U \to 0
\] 
we now know that the map \( W_T \to T \) is a \( \mathcal{W}-\)cover as in the proof of Proposition \ref{prop:lower-exchange}. But then \( \Psi([T]) = [U] \). 

This shows that \( \Psi \circ \Phi = \mathrm{Id} \). One checks, by dual computations that also \( \Phi \circ \Psi = \mathrm{Id} \).
\end{proof}

\section{Connected lattices of torsion classes}

As an intermediate step towards our characterisation, we consider the following property: in a brick-finite category for every torsion class \( \mathbf{t} \) we can find a finite collection of consecutive cover relations connecting it with the zero torsion class: we call such a chain of torsion classes \( 0 \lessdot \mathbf{t}_1 \dots \lessdot  \mathbf{t}_n = \mathbf{t} \) saturated.

\begin{definition}
Let \( \A \) be an abelian length category. We say that \( \A \) is \emph{upper torsion connected} if for every torsion class \( \mathbf{t} \) there exists a finite saturated chain connecting \( 0 \) and \( \mathbf{t} \).
\end{definition}

\begin{lemma}
\label{lem:semibrickFin}
If \( \A \) is upper torsion connected then every torsion class is compact and every semibrick is finite.
\end{lemma}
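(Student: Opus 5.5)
Let \(\mathbf t\) be a torsion class with a finite saturated chain \(0=\mathbf t_0\lessdot \mathbf t_1\lessdot\dots\lessdot \mathbf t_n=\mathbf t\). We show by induction on \(n\) that \(\mathbf t\) is finitely generated; compactness then follows from Proposition \ref{prop:compact-finitely-generated}. The claim holds for \(n=0\). For the inductive step, let \(B_i\) be the brick label of \(\mathbf t_{i-1}\lessdot\mathbf t_i\). By the Remark after the definition of wide intervals, \(\mathbf t_{i-1}^{\perp_0}\cap\mathbf t_i=\filt(B_i)\). Proposition \ref{prop:cover-intersection} then gives \(\mathbf t_i=\mathbf t_{i-1}*\filt(B_i)\). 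If \(\mathbf t_{i-1}=\T(M)\), this yields \(\mathbf t_i=\T(M\oplus B_i)\): the inclusion \(\supseteq\) is clear, and every object of \(\mathbf t_i\) is an extension of an object of \(\filt(B_i)\subseteq\T(B_i)\) by an object of \(\mathbf t_{i-1}\). Hence \(\mathbf t=\T(B_1\oplus\dots\oplus B_n)\) is finitely generated, and so every torsion class is compact.

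For the semibrick claim, let \(\mathcal S\) be a semibrick. Put \(\W=\filt(\mathcal S)\), wide by Theorem \ref{thm:ringel-semibricks}, and \(\mathbf t=\T(\W)=\T(\mathcal S)\). By the first part, \(\mathbf t\) is compact. We have \(\mathbf t=\bigvee_{S\in\mathcal S}\T(S)\), so compactness gives \(\mathbf t=\T(S_1,\dots,S_k)\) for finitely many \(S_j\in\mathcal S\). Now suppose \(S\in\mathcal S\) is different from all the \(S_j\).

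The key step is to show that \(S\) is \(\mathbf t\)-simple, or at least a torsion, almost torsionfree object for \(\mathbf t\). Intuitively, the elements of \(\mathcal S\) are the simple objects of \(\W\), and these should be lower labels of \(\mathbf t=\T(\W)\). To make this precise, take the torsion class \(\mathbf t'=\T(\mathcal S\setminus\{S\})\) and show that \(\mathbf t'\lessdot\mathbf t\) is a cover with label \(S\). Equivalently, show \(S\notin\mathbf t'\). We use Hom-orthogonality: every object of \(\mathbf t'\) has a filtration with factors that are quotients of objects in \(\W'=\filt(\mathcal S\setminus\{S\})\). Since \(\Hom(\W',S)=0\) and \(S\) is a brick, a surjection onto \(S\) from such a filtered object should be impossible. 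Concretely, \(\mathbf t'\subseteq{}^{\perp_0}S\), because \(\Hom(\W',S)=0\) and \({}^{\perp_0}S\) is a torsion class. Since \(S\neq 0\) is a brick, \(S\notin{}^{\perp_0}S\), so \(S\notin\mathbf t'\).

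Finally, \(\mathbf t=\T(S_1,\dots,S_k)\subseteq\mathbf t'\), because each \(S_j\) lies in \(\mathcal S\setminus\{S\}\). This forces \(S\in\mathbf t'\), a contradiction. Hence \(\mathcal S=\{S_1,\dots,S_k\}\) is finite.

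The main obstacle is this orthogonality step; the \({}^{\perp_0}S\) argument appears to resolve it cleanly, so the rest is bookkeeping.
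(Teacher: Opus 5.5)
Your proof is correct and follows the same route as the paper. Compactness comes from induction along a saturated chain, adding the brick label of each cover as a new generator; your use of \(\mathbf t_i=\mathbf t_{i-1}*\filt(B_i)\) makes explicit a step the paper leaves implicit. Finiteness of a semibrick \(\mathcal S\) is then read off from compactness of \(\T(\mathcal S)\), and your observation \(\T(\mathcal S\setminus\{S\})\subseteq{}^{\perp_0}S\) proves the non-compactness claim that the paper only asserts.

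One correction: your heuristic aside that \(\T(\mathcal S\setminus\{S\})\lessdot\T(\mathcal S)\) is a cover with label \(S\) is false in general. It is also not equivalent to \(S\notin\T(\mathcal S\setminus\{S\})\), as your ``Equivalently'' suggests. Nothing in the argument uses it, so you should simply drop that sentence.
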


\begin{proof}
First notice that if every torsion class is compact, then every semibrick must be finite, in fact, for any given infinite semibrick \( \mathcal{B} \) the torsion class \( \tGen{\mathcal{B}} \) is not compact. 

So we just need to prove the first part of the statement. Let \( \mathbf{t} \) be a torsion class. We proceed by induction on the minimal length of a saturated chain connecting with zero. Clearly, in the case \( n = 0 \) we have the unique possibility \( \mathbf{t} = 0 \) which is compact. 

If we have a chain of length \( n + 1 \), say \( 0 \lessdot \mathbf{t}_1 \lessdot \dots \lessdot \mathbf{t}_n \lessdot \mathbf{t}_{n + 1} = \mathbf{t} \) then by induction, we have that \( \mathbf{t}_n \) is compact. So it has the form \( \tGen{M_n} \) for some object. But then, if \( B_n \) is the brick label of the cover \( \mathbf{t}_n \lessdot \mathbf{t} \) we have \( \mathbf{t} = \tGen{M_n \oplus B_n} \), proving compactness. 
\end{proof}

This observation is enough to show that the only torsion connected categories are the brick-finite ones:

\begin{proposition}
\label{prop:torsConnIsBrickFin}
The following statements are equivalent for an abelian length category \( \A \):
\begin{enumerate}
\item \( \A \) is brick-finite.
\item \( \A \) is upper torsion connected.
\end{enumerate}
\end{proposition}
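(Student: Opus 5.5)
We prove the two implications separately.

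For \((1)\Rightarrow(2)\), assume \(\A\) is brick-finite. By Theorem \ref{thm:brick-finite-torsion-finite}, \(\tors(\A)\) is a finite lattice. In a finite poset, every interval \([0,\mathbf t]\) is finite. Hence any chain \(0\subsetneq\mathbf t_1\subsetneq\dots\subsetneq\mathbf t\) can be refined to a maximal chain, and a maximal chain of finite length consists of cover relations. More concretely, we build the chain from the top down. If \(\mathbf t\neq 0\), the set of torsion classes strictly below \(\mathbf t\) is finite and non-empty, so it has a maximal element \(\mathbf t'\), and then \(\mathbf t'\lessdot\mathbf t\). Iterating strictly decreases the torsion class, so the process terminates at \(0\) after finitely many steps, because there are only finitely many torsion classes. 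This yields a finite saturated chain.

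For \((2)\Rightarrow(1)\), assume \(\A\) is upper torsion connected. By Lemma \ref{lem:semibrickFin}, every torsion class is compact and every semibrick is finite. The plan is to reach the compactness characterisation of brick-finiteness from \cite{RingelBrickChain}, which requires every torsion class and every torsionfree class to be compact. So the real content is to establish compactness of torsionfree classes, or equivalently cocompactness of torsion classes. I would try the following direct argument instead, which avoids that issue.

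Suppose \(\A\) has infinitely many bricks. The torsion classes \(\T(B)\), one for each brick \(B\), are then pairwise distinct: indeed \(B\) is the unique \(\F(B)\)-simple generator, as in Lemma \ref{lem:relative-simple-brick}, or one can use the bijection between bricks and completely join-irreducible torsion classes from \cite{DIRRT}. Consider the top torsion class \(\A\) itself, and take a saturated chain \(0\lessdot\mathbf t_1\lessdot\dots\lessdot\mathbf t_n=\A\). Each cover \(\mathbf t_{i-1}\lessdot\mathbf t_i\) has a brick label \(B_i\), and \(\mathbf t_i=\mathbf t_{i-1}*\filt(B_i)\) by Proposition \ref{prop:cover-intersection}. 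Hence every object of \(\A\) is filtered by the finitely many bricks \(B_1,\dots,B_n\), in the order given by the chain. I then want to show that any brick \(B\) appears, up to isomorphism, among the labels of saturated chains. The idea is to run the above for the torsion class \(\T(B)\): it has a finite saturated chain to \(0\), and \(B\) is its unique lower label, so its last cover is labelled by \(B\). This alone does not bound the number of bricks, however. The better route is to look at the torsion pair \((\mathbf t,\mathbf f)=\bigl(\bigcup\text{ of a strictly increasing chain}\bigr)\). Since all torsion classes are compact, there is no infinite strictly increasing chain whose union is non-compact. Compactness of every element of a complete lattice forces the ascending chain condition: if \(\mathbf t_1\subsetneq\mathbf t_2\subsetneq\cdots\), then the join is compact and is therefore reached at a finite stage, a contradiction.

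So \(\tors(\A)\) satisfies the ACC. I then need the DCC as well. Here is the main obstacle. I would use the chain \(0\lessdot\cdots\lessdot\A\) together with ACC: every torsion class has finite "height" along saturated chains to \(0\). Given an infinite descending chain \(\mathbf s_1\supsetneq\mathbf s_2\supsetneq\cdots\), take \(\bigcap\mathbf s_i\) and argue via semidistributivity and ACC that this intersection is reached at a finite stage. Alternatively, for an infinite set of bricks, I would use the ACC to pick a torsion class maximal among those generated by finitely many of these bricks. Every further brick would then lie in it, producing infinitely many bricks filtered inside \(\T(M)\) for a single object \(M\). From there I would aim to derive a contradiction by considering the non-compact class generated by an infinite semibrick extracted from them, although extracting such a semibrick is exactly the delicate point. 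Once both ACC and DCC hold and all intervals between covers are finite, \(\tors(\A)\) is finite, and Theorem \ref{thm:brick-finite-torsion-finite} concludes.
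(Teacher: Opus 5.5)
Your proof of $(1)\Rightarrow(2)$ is fine. In $(2)\Rightarrow(1)$, you correctly deduce the ascending chain condition on $\tors(\A)$ from compactness of every torsion class (Lemma \ref{lem:semibrickFin}). The gap is what comes after. You set out to prove the descending chain condition, call it the main obstacle yourself, and leave it unproved: ``argue via semidistributivity and ACC'' is not an argument, and your alternative route stalls, as you admit, at extracting an infinite semibrick. Your closing step, that ACC, DCC and finiteness of intervals between covers give finiteness of $\tors(\A)$, is also not justified as stated. Chain conditions alone do not give finiteness: a lattice made of $0$, $1$ and an infinite antichain satisfies both. What you would actually need is finite branching at each element. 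The earlier remarks about $\A$ being filtered by the labels $B_1,\dots,B_n$, and about the classes $\T(B)$, do not feed into the conclusion. So as written, $(2)\Rightarrow(1)$ is not proved.

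The missing idea is that DCC is not needed; you already hold every ingredient.

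\begin{enumerate}
\item Lemma \ref{lem:semibrickFin} also says every semibrick is finite. The upper labels of a torsion class form a semibrick (Proposition \ref{prop:maximal-f-simple}). Hence every torsion class has only finitely many upper covers (Proposition \ref{prop:brick-labels-covers}).
\item Consider the tree whose vertices are finite saturated chains starting at $0$, with an edge when one chain extends another by one cover on top. By the previous point it is finitely branching.
\item If $\A$ were brick-infinite, there would be infinitely many torsion classes (Theorem \ref{thm:brick-finite-torsion-finite}). By upper torsion connectedness each is the endpoint of some vertex, so the tree would be infinite.
\item K\"onig's lemma then gives an infinite saturated chain $0\lessdot\mathbf t_1\lessdot\mathbf t_2\lessdot\cdots$. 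This contradicts the ACC you proved; equivalently, its join is a non-compact torsion class.
\end{enumerate}

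This is exactly the paper's argument, and you were one step away from it.
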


\begin{proof}
We prove the non-trivial implication. Assume \( \A \) is upper torsion connected. Assume \( \A \) is brick-infinite, so that there are infinitely many torsion classes. Consider the tree whose vertices are the saturated paths starting at zero. We add an edge \( p_1 \to p_2 \) if \( p_2 \) is obtained from \( p_1 \) adding a cover relation on top. 

Notice that this tree is infinite, as every torsion class admits a finite chain connecting it with zero and we assumed an infinite number of torsion classes. Moreover, this tree is finitely branching: in fact every chain ending at some torsion class \( \mathbf{t} \) can only be connected to chains indexed by the torsionfree, almost torsion objects for the torsion pair corresponding to  \( \mathbf{t} \). However, the torsionfree, almost torsion objects for a fixed torsion pair form a semibrick, and all semibricks in \( \A \) are finite by Lemma \ref{lem:semibrickFin}.

Thus we may apply Koenig's Lemma to obtain an infinite branch in this tree. This yields an infinite saturated chain of torsion classes \( 0 \lessdot \mathbf{t}_1 \lessdot \dots \mathbf{t}_n \lessdot \dots \). But then \( \bigvee_n \mathbf{t}_n \) is a non-compact torsion class in \( \A \): if compact it would be equal to some of the \( \mathbf{t}_i \). This contradicts the fact that every torsion class in \( \A \) is compact, again by Lemma \ref{lem:semibrickFin}.
\end{proof}

\section{Local and global finiteness}

We now use the exchange relation, specialised to coverings, to prove local finiteness of the Hasse quiver at all torsion classes lying at finite distance from the zero class. Then a Zorn's Lemma argument shows that every torsion class is at finite distance from zero.

\subsection{Local finiteness along finite paths}

\begin{lemma}\label{lem:local-finiteness}
Let \(\A\) be a WD and WCD abelian length category of  finite rank \( N \). If there exists a saturated chain
\[
0=\mathbf t_0\lessdot\mathbf t_1\lessdot\cdots\lessdot\mathbf t_n=\mathbf t,
\]
then \( \mathbf{t} \) has exactly \( N \) neighbours in the Hasse quiver.
\end{lemma}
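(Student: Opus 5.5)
Induct on the length \(n\) of the saturated chain, using the exchange bijection of Theorem \ref{thm:full-mutation-labels} at each step. The number of neighbours of \(\mathbf t\) in the Hasse quiver is the number of upper covers plus the number of lower covers. By Proposition \ref{prop:brick-labels-covers} this equals
\[
|\Up(\mathbf t)|+|\Low(\mathbf t)|=\bigl|\Up(\mathbf t)\textstyle\coprod\Low(\mathbf t)\bigr|,
\]
since isoclasses of labels are in bijection with covers and upper and lower covers are distinct torsion classes. So it suffices to show that this cardinality is \(N\) for every torsion class at finite distance from zero.

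For the base case \(n=0\) we have \(\mathbf t=0\). Then \(\Low(0)=\emptyset\), since \(0\) has no lower covers. By the corollary of Proposition \ref{prop:cover-intersection} on atoms, the upper covers of \(0\) correspond to the isoclasses of simple objects of \(\A\). Equivalently, the torsionfree, almost torsion objects for \((0,\A)\) are exactly the \(\A\)-simple objects that are maximal, namely the simple objects. Hence \(|\Up(0)|=N\).

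For the inductive step, suppose \(\mathbf t_{n-1}\) has exactly \(N\) neighbours and consider the cover \(\mathbf t_{n-1}\lessdot\mathbf t_n\). By the remark after the definition of wide interval, every cover relation is a wide interval, with \(\mathcal W=\filt(B)\) for the brick label \(B\). Since \(\A\) is WD, the lower pair \((\mathbf t_{n-1},\mathbf t_{n-1}^{\perp_0})\) is widely generated. Since \(\A\) is WCD, the upper pair \((\mathbf t_n,\mathbf t_n^{\perp_0})\) is widely cogenerated. The hypotheses of Theorem \ref{thm:full-mutation-labels} therefore hold, and it gives a bijection
\[
\Up(\mathbf t_n)\textstyle\coprod\Low(\mathbf t_n)\;\cong\;\Up(\mathbf t_{n-1})\textstyle\coprod\Low(\mathbf t_{n-1}).
\]
The right-hand side has cardinality \(N\) by induction, so \(\mathbf t_n=\mathbf t\) has exactly \(N\) neighbours.

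The only delicate point is the translation between labels and neighbours. One must check that distinct isoclasses give distinct covers and that no torsion class is counted as both an upper and a lower cover. Both are immediate from the bijection in Proposition \ref{prop:brick-labels-covers} and from antisymmetry of \(\subseteq\). With that settled, the main work is already contained in Theorem \ref{thm:full-mutation-labels}, so the inductive step should be routine. The finiteness of \(N\) is what makes the count meaningful at the base case, and the bijection then propagates it along the chain.
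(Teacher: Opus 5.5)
Your proposal is correct and follows the paper's proof: induction on the length of the saturated chain. The base case counts the $N$ simple objects as the upper labels of $0$, with no lower labels. The inductive step applies the bijection of Theorem \ref{thm:full-mutation-labels} to the cover $\mathbf t_{n-1}\lessdot\mathbf t_n$, which is a wide interval whose hypotheses hold by WD and WCD. Your explicit translation of labels into Hasse-quiver neighbours via Proposition \ref{prop:brick-labels-covers} simply spells out a step the paper leaves implicit.
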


\begin{proof}
We argue by induction on \(n\).

If \(n=0\), then \(\mathbf t=0\). The upper labels of the zero torsion class are precisely the isomorphism classes of simple objects of \(\A\). There are no lower labels.

Assume the statement holds for paths of length \( n \), then if we have a torsion class connected to zero via a path of length \( n + 1 \), it is enough to apply Theorem \ref{thm:full-mutation-labels} to the cover \( \mathbf{t}_n \lessdot \mathbf{t}_{n + 1} \). This is a wide interval and all the involved torsion pairs are widely generated/ cogenerated by assumption.
\end{proof}

We recover the following, which seems to be also an easy consequence of \cite[Theorem 5.3]{BPPW}.

\begin{corollary}
Let $ \mathcal{A} $ be a brick-finite abelian length category.
Then the Hasse diagram of $ \tors(\A) $ is regular.
\end{corollary}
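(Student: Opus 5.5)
The plan is to reduce the corollary to Lemma \ref{lem:local-finiteness}. To do so, I will show that a brick-finite category satisfies all of its hypotheses and that every torsion class lies on a finite saturated chain starting at \(0\). Throughout, let \(\A\) be brick-finite.

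First, \(\A\) has finite rank. Every simple object is a brick, so there are only finitely many simples up to isomorphism; call their number \(N\).

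Second, I show that \(\A\) is WD and WCD. By Theorem \ref{thm:brick-finite-torsion-finite} the lattice \(\tors(\A)\) is finite, and so is \(\torf(\A)\) through the duality \(\mathbf f\mapsto{}^{\perp_0}\mathbf f\). In a finite lattice every element is compact. Hence, by Proposition \ref{prop:compact-finitely-generated}, every torsion class is finitely generated and every torsionfree class is finitely cogenerated. Proposition \ref{prop:finitely-cogenerated} and its dual then show that every torsion pair is both widely generated and widely cogenerated.

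Third, every torsion class is reached by a finite saturated chain from \(0\). This is the implication (1)\(\Rightarrow\)(2) of Proposition \ref{prop:torsConnIsBrickFin}, and it also follows directly. Since \(\tors(\A)\) is finite, any maximal chain from \(0\) to \(\mathbf t\) inside the interval \([0,\mathbf t]\) is finite, and a maximal chain in a finite poset consists of cover relations.

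With these three facts, Lemma \ref{lem:local-finiteness} applies to every \(\mathbf t\in\tors(\A)\). So every vertex of the Hasse quiver has exactly \(N\) neighbours, counting both upper and lower covers, and the Hasse diagram is regular of degree \(N\).

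I expect no real obstacle here. The only point needing care is the passage from brick-finiteness to the WD/WCD hypotheses, where one must remember to invoke compactness via Proposition \ref{prop:compact-finitely-generated} on both the torsion and the torsionfree side.
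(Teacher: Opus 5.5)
Your proof is correct and follows the paper's argument. Brick-finiteness gives finite rank, the WD and WCD properties (via finiteness of $\tors(\A)$, compactness, and Proposition~\ref{prop:finitely-cogenerated}), and a finite saturated chain from $0$ to every torsion class, so Lemma~\ref{lem:local-finiteness} applies at every vertex; you also spell out steps the paper leaves implicit, namely compactness in a finite lattice and why a maximal chain in the finite interval $[0,\mathbf t]$ consists of covers in $\tors(\A)$.
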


\begin{proof}
Every brick-finite abelian length category is WD and WCD and has finite rank. Moreover, every torsion class can be reached along a finite path from the zero class.
Thus Lemma \ref{lem:local-finiteness} applies to every torsion class.
\end{proof}

\subsection{Finite distance from zero}

We need one elementary lattice observation.

\begin{lemma}\label{lem:cocompact-chain}
Let \(L\) be a complete lattice and let \((x_i)_{i\in I}\) be a strictly descending chain with no minimal element. If
\[
x=\bigwedge_{i\in I}x_i,
\]
then \(x\) is not co-compact.
\end{lemma}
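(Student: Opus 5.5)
We argue by contradiction, directly from the definition of co-compactness. Suppose \(x\) were co-compact. Since \(x=\bigwedge_{i\in I}x_i\), in particular \(x\ge\bigwedge_{i\in I}x_i\). Co-compactness then gives a finite subset \(I_0\subseteq I\) with
\[
x\ge\bigwedge_{i\in I_0}x_i .
\]
We may take \(I_0\) non-empty. If \(I_0\) were empty, the meet would be the top element of \(L\), so \(x\) would be the top element. But then every \(x_i\ge x\) equals the top element, and a strictly descending chain has at least two distinct elements. So we can add any index to \(I_0\) without affecting the inequality.

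The key step uses that the family is a chain. A finite non-empty subset of a chain has a least element, so \(\bigwedge_{i\in I_0}x_i=x_{j}\) for some \(j\in I_0\). This gives \(x\ge x_j\). On the other hand \(x=\bigwedge_{i}x_i\le x_j\), hence \(x=x_j\).

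Now we use that the chain has no minimal element: there is some \(k\in I\) with \(x_k<x_j\). But \(x\le x_k\) by definition of the meet, so \(x_j=x\le x_k<x_j\), which is a contradiction. Therefore \(x\) is not co-compact.

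None of these steps is a real obstacle. The only points that need a little care are the degenerate case where \(I_0\) is empty, and the observation that a finite meet taken inside a chain is attained by one of its elements.
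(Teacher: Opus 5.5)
Your proof is correct and follows the paper's argument. You apply co-compactness to \(x\ge\bigwedge_{i\in I}x_i\), use that a finite non-empty meet in a chain is attained at some \(x_j\), conclude \(x=x_j\), and contradict the absence of a minimal element. Your handling of \(I_0=\varnothing\) is a small detail the paper leaves implicit. It is simpler to note that enlarging \(I_0\) only lowers the meet, so any index can be added to \(I_0\) provided \(I\neq\varnothing\).
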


\begin{proof}
Co-compact means compact in the opposite lattice. If \(x\) were co-compact, then from
\[
\bigwedge_{i\in I}x_i\leq x
\]
we would get a finite subset \(J\subseteq I\) such that
\[
\bigwedge_{j\in J}x_j\leq x.
\]
The other inequality is automatic, hence equality holds. Since the family is a chain, the finite meet \(\bigwedge_{j\in J}x_j\) is one of the \(x_j\). Thus \(x=x_j\) for some \(j\), contradicting the assumption that the chain has no minimal element.
\end{proof}

\begin{lemma}\label{lem:finite-distance}
Let \(\A\) be a WD and WCD abelian length category of finite rank. Then every torsion class in \(\A\) can be reached from the zero torsion class by a finite saturated chain.
\end{lemma}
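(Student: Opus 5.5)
The plan is a Zorn's Lemma argument on the set of torsion classes that \emph{cannot} be reached from zero. Let \(\mathcal R\subseteq\tors(\A)\) be the set of torsion classes admitting a finite saturated chain from \(0\), and suppose for contradiction that its complement \(\mathcal N=\tors(\A)\setminus\mathcal R\) is non-empty. I will show that \(\mathcal N\), ordered by inclusion, has a minimal element, and then that such a minimal element cannot exist.

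\textbf{Step 1: meets of chains stay in \(\mathcal N\).} Let \((x_i)_{i\in I}\) be a non-empty chain in \(\mathcal N\), and put \(x=\bigwedge_i x_i\) (an intersection of torsion classes). If the chain has a minimum, then \(x\) is that minimum and lies in \(\mathcal N\). Otherwise we may pass to a strictly descending cofinal subchain with no minimal element, so Lemma \ref{lem:cocompact-chain} shows that \(x\) is not co-compact. Now suppose \(x\in\mathcal R\). Then Lemma \ref{lem:local-finiteness} says \(x\) has exactly \(N\) neighbours in the Hasse quiver. By Proposition \ref{prop:brick-labels-covers}, this means \(x\) has only finitely many torsionfree, almost torsion objects. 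Part (3) of Corollary \ref{cor:basic-wide-consequences}, which uses WD and WCD, then forces \(x\) to be co-compact. This contradicts what we just derived, so \(x\in\mathcal N\). Hence every chain in \(\mathcal N\) has a lower bound in \(\mathcal N\), and Zorn's Lemma gives a minimal element \(\mathbf m\in\mathcal N\).

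\textbf{Step 2: a minimal element yields a contradiction.} Since \(0\in\mathcal R\), we have \(\mathbf m\neq 0\). Part (1) of Corollary \ref{cor:basic-wide-consequences} gives a torsion, almost torsionfree object for \(\mathbf m\). By Proposition \ref{prop:brick-labels-covers} this is a lower cover \(\mathbf m'\lessdot\mathbf m\). By minimality \(\mathbf m'\in\mathcal R\), so a saturated chain from \(0\) to \(\mathbf m'\), extended by the cover \(\mathbf m'\lessdot\mathbf m\), is a finite saturated chain from \(0\) to \(\mathbf m\). This contradicts \(\mathbf m\in\mathcal N\), so \(\mathcal N=\emptyset\).

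\textbf{Main obstacle.} The delicate point is Step 1, namely deducing co-compactness of a reachable \(x\) from local finiteness. This needs the translation "finitely many upper covers \(\Rightarrow\) finitely many torsionfree, almost torsion objects \(\Rightarrow\) co-compact". That translation relies on the WCD hypothesis through Proposition \ref{prop:finitely-cogenerated} and Proposition \ref{prop:compact-finitely-generated}, packaged in Corollary \ref{cor:basic-wide-consequences}(3), together with the finite-rank hypothesis through Lemma \ref{lem:local-finiteness}. A smaller technical point is reducing an arbitrary chain without minimum to one to which Lemma \ref{lem:cocompact-chain} applies. The argument of that lemma only uses that the family is a chain with no minimal element, so it applies directly and no reduction is really needed.
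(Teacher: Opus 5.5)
Your proposal is correct and is essentially the paper's own proof: a Zorn's lemma argument on the set of unreachable torsion classes, where the meet of a chain without minimum is non-co-compact by Lemma~\ref{lem:cocompact-chain}, hence (via WCD and Corollary~\ref{cor:basic-wide-consequences}(3)) has infinitely many upper covers and so cannot be reachable by Lemma~\ref{lem:local-finiteness}, while a minimal unreachable class has a lower cover by WD. The only difference is cosmetic: you phrase the chain step as a contrapositive ("if reachable, then finitely many upper labels, hence co-compact"), which changes nothing.
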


\begin{proof}
Let \(\mathcal B\) be the set of bad torsion classes, namely those torsion classes which cannot be reached from \(0\) by a finite saturated chain. We show that \(\mathcal B\) is empty.

Notice that if not empty, the set \( \mathcal B \) is co-inductive.
Assume first that \((\mathbf b_i)_{i\in I}\) is a descending chain in \(\mathcal B\). If the chain has a minimal element, then that element is a lower bound in \(\mathcal B\). Otherwise set
\[
\mathbf b=\bigwedge_{i\in I}\mathbf b_i.
\]
By Lemma \ref{lem:cocompact-chain}, \(\mathbf b\) is not co-compact. Since \(\A\) is WCD, Corollary \ref{cor:basic-wide-consequences} implies that \(\mathbf b\) has infinitely many upper covers. But any torsion class at finite distance from zero has only finitely many upper covers by Lemma \ref{lem:local-finiteness}. Therefore \(\mathbf b\) is itself bad. Thus every descending chain in \(\mathcal B\) has a lower bound in \(\mathcal B\).

By Zorn's lemma, \(\mathcal B\) has a minimal element \(\mathbf b_{\min}\). It is not the zero class. Since \(\A\) is WD, Corollary \ref{cor:basic-wide-consequences} gives a lower cover
\[
\mathbf b'\lessdot\mathbf b_{\min}.
\]
If \(\mathbf b'\) were at finite distance from zero, then so would \(\mathbf b_{\min}\), by adding the last cover. Hence \(\mathbf b'\in\mathcal B\), contradicting the minimality of \(\mathbf b_{\min}\). This contradiction proves that \(\mathcal B=\varnothing\).
\end{proof}

\begin{theorem}\label{thm:main}
Let \(\A\) be a finite-rank abelian length category. The following statements are equivalent.
\begin{enumerate}[label=\textup{(\arabic*)},leftmargin=2.5em]
\item \(\A\) is brick-finite.
\item \(\A\) is widely determined and widely co-determined.
\end{enumerate}
\end{theorem}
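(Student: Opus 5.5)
For the implication (2) $\Rightarrow$ (1) I will simply chain the results already established. Assume $\A$ has finite rank and is WD and WCD. Lemma \ref{lem:finite-distance} then says that every torsion class can be reached from $0$ by a finite saturated chain, which is exactly the statement that $\A$ is upper torsion connected. Proposition \ref{prop:torsConnIsBrickFin} converts this into brick-finiteness. In this direction the real work, namely the Zorn argument together with the local finiteness of Lemma \ref{lem:local-finiteness}, has already been done. What remains is to check that the hypotheses of the quoted lemmas are precisely WD, WCD and finite rank, and they are.

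For (1) $\Rightarrow$ (2), assume $\A$ is brick-finite. By Theorem \ref{thm:brick-finite-torsion-finite} there are only finitely many torsion pairs. Every semibrick is then finite, since there are finitely many bricks. I will show that every torsion class is compact in $\tors(\A)$. Take a torsion class $\mathbf t \le \bigvee_I \mathbf y_i$. The finite joins $\bigvee_{I_0}\mathbf y_i$ over finite $I_0\subseteq I$ form a directed family. Because $\tors(\A)$ is finite, this family has a largest element, and that element must equal the full join. Hence $\mathbf t$ lies below some finite subjoin. Proposition \ref{prop:compact-finitely-generated} then shows $\mathbf t$ is finitely generated. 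The dual part of Proposition \ref{prop:finitely-cogenerated} gives that $(\mathbf t,\mathbf f)$ is widely generated, so $\A$ is WD. The same argument in the finite lattice $\torf(\A)$ shows every torsionfree class is compact, hence finitely cogenerated, hence widely cogenerated by Proposition \ref{prop:finitely-cogenerated}. Thus $\A$ is WCD.

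An alternative route to WD that avoids compactness would use Proposition \ref{prop:wide-cogenerated-criterion} directly: show that every $\mathbf t$-simple object is a quotient of a torsion, almost torsionfree object by choosing a maximal $\mathbf t$-simple in the finite set of bricks. The compactness route is cleaner, so I will use it.

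The only point needing care is the compactness claim in a finite lattice, and it is immediate. So I expect no genuine obstacle. The theorem is essentially an assembly of Lemma \ref{lem:finite-distance}, Proposition \ref{prop:torsConnIsBrickFin}, and the Ringel/DIRRT finiteness criteria.
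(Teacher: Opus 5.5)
Your proposal is correct and follows the paper's proof essentially verbatim. For (2)$\Rightarrow$(1) you use Lemma \ref{lem:finite-distance} followed by Proposition \ref{prop:torsConnIsBrickFin}, and for (1)$\Rightarrow$(2) you use finiteness of $\tors(\A)$, compactness and Proposition \ref{prop:finitely-cogenerated}; your explicit appeal to Proposition \ref{prop:compact-finitely-generated} to pass from compact to finitely (co)generated fills in a step the paper leaves implicit.
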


\begin{proof}
Assume first that \(\A\) is brick-finite. Then there are only finitely many torsion pairs by Theorem \ref{thm:brick-finite-torsion-finite}. In particular, all torsion and torsionfree classes are compact. By Proposition \ref{prop:finitely-cogenerated}, every torsion class is widely generated and every torsionfree class is widely cogenerated. Thus \(\A\) is WD and WCD.

Conversely, assume that \(\A\) is WD and WCD. By Lemma \ref{lem:finite-distance}, the category \( \A \) is upper torsion connected. By Proposition \ref{prop:torsConnIsBrickFin}, \( \A \) is brick-finite.
\end{proof}

As a consequence, we obtain the announced Brick Brauer-Thrall statement.

\begin{theorem}\label{thm:bounded-length}
Let \(\A\) be a finite-rank abelian length category. Then \(\A\) is brick-finite if and only if there exists \(N\in\mathbb N\) such that every brick \(B\in\A\) satisfies \(\elll(B)<N\).
\end{theorem}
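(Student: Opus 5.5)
The forward implication is immediate: if \(\A\) is brick-finite there are only finitely many isomorphism classes of bricks, so their lengths are bounded and we take \(N\) larger than the maximum. The content lies in the converse. Assume every brick has length \(<N\). By Theorem \ref{thm:main} it suffices to show that \(\A\) is widely determined and widely co-determined. The two halves are dual, since the hypothesis is self-dual (bricks in \(\A^{\mathrm{op}}\) are the same objects, of the same length). So I will only show that every torsionfree class \(\mathbf f\) is widely cogenerated.

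By Proposition \ref{prop:wide-cogenerated-criterion} I need: every \(\mathbf f\)-simple \(B\) is a subobject of some torsionfree, almost torsion object for \((\mathbf t,\mathbf f)\). The plan is to enlarge \(B\) inside the poset of \(\mathbf f\)-simples ordered by monomorphisms. Start with \(B_0=B\). If \(B_0\) is not torsionfree, almost torsion, then by Proposition \ref{prop:maximal-f-simple} there is an \(\mathbf f\)-simple \(B_1\not\simeq B_0\) with a non-zero map \(B_0\to B_1\). By Lemma \ref{lem:relative-simple-brick}(3) this map is a monomorphism, and it is proper since \(B_0\not\simeq B_1\). Hence \(\elll(B_1)>\elll(B_0)\). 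Iterate to get a chain of proper monomorphisms
\[
B_0\hookrightarrow B_1\hookrightarrow B_2\hookrightarrow\cdots
\]
of \(\mathbf f\)-simples. Each \(B_i\) is a brick by Lemma \ref{lem:relative-simple-brick}(1), so its length is bounded by \(N\) while the lengths strictly increase. The process therefore stops after fewer than \(N\) steps at some \(B_k\) which is torsionfree, almost torsion. Composing the monomorphisms exhibits \(B\) as a subobject of \(B_k\), which is exactly condition (2) of Proposition \ref{prop:wide-cogenerated-criterion}.

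Dually, applying the same argument to \(\A^{\mathrm{op}}\), every \(\mathbf t\)-simple is a quotient of a torsion, almost torsionfree object, so every torsion class is widely generated. Theorem \ref{thm:main}, using finite rank, then gives brick-finiteness.

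The only delicate point is the extraction step: I must check that Proposition \ref{prop:maximal-f-simple} really produces a \emph{non-isomorphic} \(\mathbf f\)-simple receiving a non-zero map. It does, since the proposition characterises torsionfree, almost torsion objects as exactly those \(\mathbf f\)-simples with no non-zero maps to non-isomorphic \(\mathbf f\)-simples. The length bound then replaces any Zorn-type argument, so I do not expect a genuine obstacle.
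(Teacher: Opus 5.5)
Your proposal is correct and follows essentially the same approach as the paper. You use Proposition~\ref{prop:maximal-f-simple} and Lemma~\ref{lem:relative-simple-brick} to build a chain of proper monomorphisms between \(\mathbf f\)-simples, stop it with the length bound on bricks, and conclude via Proposition~\ref{prop:wide-cogenerated-criterion}, duality and Theorem~\ref{thm:main}.
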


\begin{proof}
If \(\A\) is brick-finite, then the set of lengths of bricks is finite, hence bounded.

Conversely, assume that there exists \(N\in\mathbb N\) such that \(\elll(B)<N\) for every brick \(B\). We show that \(\A\) is WD and WCD.

Let \((\mathbf t,\mathbf f)\) be a torsion pair, and let \(S\) be an \(\mathbf f\)-simple object. If \(S\) is not torsionfree, almost torsion, then by Proposition \ref{prop:maximal-f-simple} there exists an \(\mathbf f\)-simple object \(S_1\) and a non-zero non-isomorphism \(S\to S_1\). Since \(S\) is \(\mathbf f\)-simple, this map is a monomorphism, hence
\[
\elll(S_1)>\elll(S).
\]
Repeating the argument, and using the uniform bound on lengths of bricks, the process must stop. Thus every \(\mathbf f\)-simple object embeds into a torsionfree, almost torsion object. By Proposition \ref{prop:wide-cogenerated-criterion}, \(\mathbf f\) is widely cogenerated.

The dual argument shows that every \(\mathbf t\)-simple object is a quotient of a torsion, almost torsionfree object. Hence every torsion class is widely generated. Therefore \(\A\) is WD and WCD, and Theorem \ref{thm:main} implies that \(\A\) is brick-finite.
\end{proof}

\begin{remark}
Notice that in the proof of Theorem \ref{thm:main} and of the necessary Lemmas it would be enough to assume a slightly weaker condition: a finite rank abelian length category is brick finite if and only if it is WD and every torsionfree class that can be reached from the zero class along a finite path in the Hasse quiver is widely cogenerated. 
Notice that the second condition always holds in the module category of an artin algebra, where the one-sided WD or WCD condition is in fact enough to ensure brick-finiteness (see \cite[Proposition 5.4]{WideCoreflective}).
\end{remark}

\section{Further directions}

It is clear that the finite-rank hypothesis is necessary for Theorem \ref{thm:main} (in fact as simple objects are bricks, without finite rank it is impossible to have brick-finiteness). Nevertheless, it is still interesting to consider abelian length categories with infinitely many simple objects which are both widely determined and widely co-determined. Such categories often have well-behaved lattices of torsion pairs, even when they are brick-infinite.

\begin{example}
Let \(k\) be an algebraically closed field and consider the category of finite-length \(k[x]\)-modules. Every simple module is one-dimensional and has the form
\[
S_\lambda=k[x]/(x-\lambda),\qquad \lambda\in k.
\]
The simple modules are pairwise non-isomorphic, so there are infinitely many of them. Every finite-length indecomposable is of the form
\[
k[x]/(x-\lambda)^n,
\]
and for \(n>1\) it admits a non-trivial nilpotent endomorphism. Thus the bricks are precisely the simple modules.

Consequently, the category is brick-infinite but every brick has length one. The torsion classes, torsionfree classes and wide subcategories all identify with subsets of \(k\). In particular, the category is both widely determined and widely co-determined, although it is not finite rank.
\end{example}

Also notice that in infinite rank WD and WCD are easily seen to be independent conditions.

\begin{example}
Let $Q$ be the linearly oriented $ \mathbb{A}_\infty $ quiver
\[
1 \to 2 \to 3 \to \cdots
\]
and let $\mathcal{A} = \mathrm{rep}_k(Q)$ be the abelian length category of finite-dimensional $k$-representations.
It is well known that the finite-dimensional indecomposables of $\mathcal{A}$ are the \emph{interval modules}
\[
k_I \qquad I =  [i, j] \text{ where }1 \le i \le j,
\]
with \( k_i := k \) and all structure maps the identity; equivalently, every object of $\mathcal{A}$ decomposes as a finite direct sum of such interval modules \cite[Theorem 1.1]{CrawleyBoevey}.

Moreover, each $k_I$ is uniserial, and its submodules are exactly the terminal subintervals
\[
k_{[t,j]} \subseteq k_{[i,j]} \qquad i \le t \le j.
\]
In particular, if we have a monomorphism \( k_I \to k_J \) then the intervals \( I \) and \( J\) have the same endpoint.

Now let $\mathbf{f}\subseteq \mathcal{A}$ be a torsionfree class. To show that $ \mathbf{f} $ is widely cogenerated it is enough to verify that every $ \mathbf{f}-$simple is contained in some torsionfree, almost torsion object ( see Proposition \ref{prop:wide-cogenerated-criterion} ).

Take a $\mathbf{f}$-simple $B\in\mathbf{f}$.
Then $B\simeq k_{[i,j]}$. If $B\hookrightarrow S$ with $S$ a $\mathbf{f}$-simple, then  we have $S\simeq k_{[k,j]}$ with $1\le k\le i$.
So there are only finitely many possibilities for such $S$, hence $B$ is contained in a maximal $\mathbf{f}$-simple.

On the other hand, consider the full subcategory
\[
\mathbf{t} := \mathrm{add}\{ k_{[1,n]} \mid n\ge 1\} \subseteq \A.
\]
Then $\mathbf{t}$ is a torsion class: closure under quotients is clear, as any indecomposable quotient of an object in \( \mathbf{t} \) must have the form \( k_{[1,m]} \) while closure under extension can be easily obtained for instance noticing that all the given interval representations are injective.  The corresponding torsionfree class is the Serre subcategory $ \mathrm{filt}( S_2, \dots, S_n, \dots) $ generated by all the simple objects \( S_i = k_{[i, i]} \), except for \( S_1 \).

However, the given torsion class is not widely generated: in fact there are no torsion, almost torsionfree objects. First notice that every \( k_{[1, n]} \) is \( \mathbf{t}-\)simple (all its proper subobjects are torsionfree ) then for each $m\ge 1$ the epimorphism
\[
\pi_m\colon k_{[1,m+1]} \twoheadrightarrow k_{[1,m]}
\]
proves that the set of \(\mathbf{t}-\)simples doesn't have any maximal object.
Thus $\mathbf{t}$ is not widely generated.

In conclusion, in $\mathrm{rep}_k(\mathbb{A}_\infty)$ every torsionfree class is widely cogenerated, but there exist torsion classes which are not widely generated.
\end{example}

This motivates the following local finiteness condition.

\begin{definition}
We say that an abelian length category \(\A\) is locally brick-finite if for every simple object \(S\in\A\), there are only finitely many isomorphism classes of bricks having \(S\) as a composition factor.
\end{definition}

\begin{proposition}\label{prop:locally-brick-finite-WD-WCD}
If \(\A\) is locally brick-finite, then \(\A\) is both widely determined and widely co-determined.
\end{proposition}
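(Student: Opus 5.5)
The plan is to use the criterion of Proposition \ref{prop:wide-cogenerated-criterion}, exactly as in the proof of Theorem \ref{thm:bounded-length}. The only change is that the global length bound is replaced by a local one coming from a fixed composition factor. By duality ($\A^{\mathrm{op}}$ is again locally brick-finite, since bricks and composition factors are self-dual notions), it suffices to show that every torsionfree class is widely cogenerated.

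So fix a torsion pair $(\mathbf t,\mathbf f)$ and an $\mathbf f$-simple object $S$. We must show that $S$ embeds into a torsionfree, almost torsion object. Start with $S_0=S$. If $S_0$ is not torsionfree, almost torsion, then Proposition \ref{prop:maximal-f-simple} provides an $\mathbf f$-simple $S_1\not\simeq S_0$ together with a nonzero map $S_0\to S_1$. By Lemma \ref{lem:relative-simple-brick}(3) this map is a monomorphism, and it is proper because $S_1\not\simeq S_0$ and both are bricks; in particular $\elll(S_1)>\elll(S_0)$. Iterating, we obtain a chain of proper monomorphisms
\[
S_0\hookrightarrow S_1\hookrightarrow S_2\hookrightarrow\cdots
\]
of $\mathbf f$-simple objects, and lengths strictly increase along it. The chain is finite precisely when some $S_n$ is torsionfree, almost torsion, and then $S$ embeds into it.

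To rule out an infinite chain, pick a simple object $L$ that is a composition factor of $S_0$; it exists because $S_0\neq 0$. Since $S_0$ is a subobject of each $S_n$, the Jordan--H\"older theorem shows that $L$ is a composition factor of every $S_n$. By Lemma \ref{lem:relative-simple-brick}(1), each $S_n$ is a brick. Because the lengths strictly increase, the $S_n$ are pairwise non-isomorphic. An infinite chain would therefore give infinitely many isomorphism classes of bricks having $L$ as a composition factor, contradicting local brick-finiteness. Hence the process stops, and Proposition \ref{prop:wide-cogenerated-criterion} gives that $\mathbf f$ is widely cogenerated.

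The dual argument handles torsion classes. For a $\mathbf t$-simple $T$, use the dual of Proposition \ref{prop:maximal-f-simple} and Lemma \ref{lem:relative-simple-brick} to build a chain of proper epimorphisms $\cdots\twoheadrightarrow T_1\twoheadrightarrow T_0=T$ of $\mathbf t$-simples. A composition factor of $T$ is a composition factor of every $T_n$, so the same finiteness argument applies. The dual criterion in Proposition \ref{prop:wide-cogenerated-criterion} then shows that $\mathbf t$ is widely generated.

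I expect no real obstacle here. The only point needing care is that the chain really consists of pairwise non-isomorphic bricks that all share one fixed simple composition factor, and this follows from the strict increase of lengths together with the inclusion $S_0\subseteq S_n$.
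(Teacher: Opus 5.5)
Your proposal is correct and follows essentially the same route as the paper: both fix a composition factor of the $\mathbf f$-simple object and use local brick-finiteness to bound the $\mathbf f$-simples containing it, then conclude via Proposition~\ref{prop:maximal-f-simple} and Proposition~\ref{prop:wide-cogenerated-criterion}, with the torsion side handled dually. Your explicit chain of proper monomorphisms with strictly increasing lengths is just a more detailed version of the paper's step ``one of them is maximal''.
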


\begin{proof}
We prove that \(\A\) is WCD; the other statement is dual. Let \((\mathbf t,\mathbf f)\) be a torsion pair and let \(B\) be an \(\mathbf f\)-simple object. Choose a composition factor \(S\) of \(B\). By local brick-finiteness, there are only finitely many bricks having \(S\) as a composition factor. Hence there are only finitely many \(\mathbf f\)-simple objects containing \(B\) as a subobject. Therefore one of them is maximal, hence torsionfree, almost torsion by Proposition \ref{prop:maximal-f-simple}. In conclusion, by Proposition \ref{prop:wide-cogenerated-criterion}, \(\mathbf f\) is widely cogenerated.
\end{proof}

\begin{remark}
Notice that for infinite rank categories, also the link with the first Brauer-Thrall condition becomes weaker: if an abelian length category satisfies the boundedness condition for brick-lengths then it is surely widely determined and co-determined, however, there are locally brick-finite categories that do not satisfy the condition. For instance, fixed a field $ k $, take the direct sum of the categories $ k\mathbb{A}_n-\mathrm{mod} $ for the linearly oriented quiver $ \mathbb{A}_n : \bullet \to \bullet \to \dots \to \bullet $. This category is surely locally brick finite, but its bricks have unbounded lengths.

This category is "disconnected" in the appropriate sense, so to exclude these trivial counterexamples, one should probably investigate the relation between these properties only for connected categories.
\end{remark}

\begin{question}
Is an abelian length category widely determined and widely co-determined if and only if it is locally brick-finite?
\end{question}

\end{document}